\documentclass[11pt]{amsart}
\usepackage{mathrsfs}
\usepackage{geometry}
\usepackage{graphicx}
\usepackage{amssymb}
\usepackage{epstopdf}
\usepackage{amsmath,amscd}
\usepackage{amsthm}
\usepackage{enumerate}
\usepackage{url,verbatim}
\usepackage{subfig}

\usepackage[normalem]{ulem}
\usepackage{fixme}

\RequirePackage[colorlinks,citecolor=blue,urlcolor=blue]{hyperref}
\usepackage{breakurl}
\theoremstyle{plain}
\textwidth=430pt
\textheight=620pt
\DeclareGraphicsRule{.tif}{png}{.png}{`convert #1 `dirname #1 `/`basename #1 .tif`.png}

\newtheorem{theorem}{Theorem}
\newtheorem{definition}[theorem]{Definition}
\newtheorem{lemma}[theorem]{Lemma}
\newtheorem{proposition}[theorem]{Proposition}

\newtheorem{example}[theorem]{Example}
\newtheorem{assumption}[theorem]{Assumption}
\newtheorem{remark}[theorem]{Remark}

\newtheorem{construction}[theorem]{Construction}

\newcommand\ol{\overline}

\newcommand\EE{{\mathbb E}}

\newcommand\RR{{\mathbb R}}
\newcommand\ZZ{{\mathbb Z}}
\newcommand\NN{{\mathbb N}}

\newcommand\si{\sigma}

\newcommand\q{\quad}
\newcommand\resp{respectively}

\newcommand\Si{\Sigma}

\renewcommand\ell{l}

  %%{\EG^{\mathrm{F}}}

\newcommand\GT{\mathbb{G}\mathbb{T}}
\newcommand\CC{\mathbb{C}}
\newcommand\bm{\mathbf{m}}
\newcommand\SH{\mathrm{SH}}

\newcounter{mycount}

\numberwithin{equation}{section}
\numberwithin{theorem}{section}
\numberwithin{figure}{section}

\title[Schur function and limit shape of dimers]{Schur function at  general points and limit shape of perfect matchings on contracting square hexagon lattices with piecewise boundary conditions}

\date{}

\author{Zhongyang Li}
\address{Department of Mathematics,
University of Connecticut,
Storrs, Connecticut 06269-3009, USA}
\email{zhongyang.li@uconn.edu}
\urladdr{\url{https://mathzhongyangli.wordpress.com}}

\begin{document}
\maketitle

\begin{abstract}
We obtain a new formula to relate the value of a Schur polynomial with variables $(x_1,\ldots,x_N)$ with values of Schur polynomials at $(1,\ldots,1)$. This allows to study the limit shape of perfect matchings on a square hexagon lattice with periodic weights and piecewise boundary conditions. In particular, when the edge weights satisfy certain conditions, asymptotics of the Schur function imply that the liquid region of the model in the scaling limit has multiple connected components, while the frozen boundary consists of disjoint cloud curves.
\end{abstract}

\section{Introduction}

Schur polynomials, named after Issai Schur, are a class of symmetric polynomials indexed by decreasing sequences of non-negative integers, which form a linear basis for the space of all symmetric polynomials; see \cite{IGM15}. Besides their applications in representation theory, Schur polynomials also play an important role in the study of integrable lattice models in statistical mechanics (see \cite{AB07,AB11}). One example of such a model is the dimer model, or equivalently, random tiling model.

A \emph{dimer configuration}, or a \emph{perfect matching}, is a subset of the set of edges of a graph in which each vertex is incident to exactly one edge. A \emph{two-dimensional dimer model} is a probability measure on dimer configurations of a plane graph. Two-dimensional dimer models are exactly solvable models, in the sense that one can exactly compute the number of configurations and the local statistics by algebraic methods. Such a property and the connection of this model with several other models in statistical mechanics, including the Ising model (\cite{Fi66,ZLsp,ZL12}) and the 1-2 model (\cite{ZL14,ZL141,GL17,GL15}) put the dimer model at the intersection of several branches of mathematics (probability, combinatorics, representation theory, algebraic geometry), as well as statistical physics and computer science.

The weighted dimer model has been studied extensively by developing the techniques initiated by Temperley, Fisher and Kasteley (\cite{TF61,Ka61}) and analyzing the weighted adjacency matrix of the underlying graph, and spectacular results were obtained including the phase transition (\cite{KOS06,KO06}), conformal invariance (\cite{RK00,RK01,RK14,Li13}), and the limit shape (\cite{OR01,KO07}).
Recently the uniform dimer models on the hexagonal lattice or the square grid were studied by analyzing Schur polynomials. As a determinantal process, the correlation kernel for the uniform dimer model can be computed explicitly as a double integral (see \cite{AO01,LP142}) - this implies the limit shape result (law of large numbers; see \cite{LP14}) and the convergence of height fluctuations to a Gaussian free field (central limit theorem; see \cite{LP142}) in the scaling limit. The asymptotics of Schur polynomials in a neighborhood of $(1,\ldots,1)$ were studied in (\cite{GP15,bg,bg16}), and the limit shape and height fluctuations were obtained for the uniform dimer model on the hexagonal lattice, the uniform dimer model on the square grid (\cite{bk}), and certain periodic dimer model on the square-hexagon lattice with period of edge weights $1\times n$ (\cite{BL17,Li182,ZL20}).

The main aim of this paper is to study questions concerning limit shapes of two-dimensional dimer models. More precisely, to each random perfect matching we associate a height function - a function that assigns an integer to each face of the plane graph. When the plane graphs become larger and larger, we rescale these graphs such that the rescaled graphs approximate a certain simply-connected domain in the plane, then evidence has been amassed that the rescaled height functions are governed by ``laws of large numbers'', and converge to some naturally defined shapes. These questions have origins from the observations that the uniform random domino tilings of a large Aztec diamond (a subgraph of the 2D square grid consisting of all squares whose centers $(x,y)$ satisfy $|x|+|y|\leq n$) tends to be non-random outside a circle tangent to the boundary of the graph. This circle is called the ``arctic circle'', which is an example of a frozen boundary.

This paper is a continuation of \cite{BL17}. In \cite{BL17}, we studied the $1\times n$ periodic dimer model on the square-hexagon lattice where the boundary condition is also periodic in the sense that each remaining vertex on the boundary is followed by $(m-1)$ removed vertices, where $m\geq 1$ is a fixed positive integer. One difference between the uniform and the $1\times n$ periodic dimer model is when computing their partition function (weighted sum of all the configurations), the former can computed by the value of Schur functions at $(1,\ldots,1)$, and the later can be computed by the value of Schur function at a point depending on edge weights. When the boundary condition satisfies the condition that  each remaining vertex on the boundary is followed by $(m-1)$ removed vertices, there is an explicit formula to compute the corresponding Schur function at a generic point. The dimer model on similar graphs were also studied in \cite{BF15,bbccr}.

In this paper, we study the dimer model on a contracting square-hexagon lattice with piecewise boundary conditions. More precisely, the boundary can be divided into finitely many segments; each segment consists of either only remaining vertices or only removed vertices; the segments consisting of only remaining vertices and the segments consisting of only removed vertices are alternate; the length of each segment grows linearly as the size of the graph grows. The main tool used to study such a model is a formula we obtained to relate the value of a Schur function at a generic point to the values of Schur functions at $(1,\ldots,1)$, which gives the asymptotics of the Schur function at a generic point when the boundary condition is piecewise and the edge weights are periodic by finding a leading term in the formula. When the edge weights satisfy certain conditions, from the asymptotics of the Schur function, we obtain the surprising results that the liquid region of the model in the scaling limit has multiple connected components, whose boundary consists of disjoint cloud curves.

The organization of the paper is as follows.  In Section \ref{mr}, we review the definitions and summarize the main results proved in the paper. In Section \ref{fs}, we prove a combinatorial formula which relate the value of a Schur function at a generic point to the values of Schur functions at $(1,\ldots,1)$. In Section \ref{s4}, we study the asymptotics of Schur polynomials at a generic point by analyzing the combinatorial formula proved in Section \ref{fs}. In Section \ref{s5}, we obtain an explicit integral formula for the moments of the limit of counting measure for the $1\times n$ periodic dimer model on a contracting square-hexagon lattice with piecewise boundary conditions. In Section \ref{s7}, we discuss the existence of the frozen region, which is the region where each type of edges has either probability 0 or probability 1 to occur. For certain special cases, we find explicitly the parametric equation of frozen boundary (which is the boundary of the frozen region), and show that the frozen boundary is a union of $n$ disjoint cloud curves, where $n$ is the size of a period. In Section \ref{s8}, we give concrete examples to illustrate combinatorial formulas to compute Schur functions proved in Section \ref{fs}.

\section{Main Results}\label{mr}

In this section, we define the main objects to be studied in this paper, including the Schur function, the square-hexagon lattice and the perfect matching. Then we state the main results of this paper.

\subsection{Partitions, counting measure and Schur functions}

Let $l$ be a positive integer. Throughout this paper, we shall use the following notation:
\begin{eqnarray*}
[l]=\{1,2,\ldots,l\}
\end{eqnarray*}

\begin{definition}
  A \emph{partition} of length $N$ is a sequence of nonincreasing, nonnegative integers
  $\mu=(\mu_1\geq \mu_2\geq \ldots \geq\mu_N\geq 0)$. Each $\mu_k$ is a
  \emph{component} of the partition $\mu$. The length $N$ of the partition $\mu$
  is denoted by $\ell(\mu)$. The \emph{size} of a partition $\mu$ is
  \begin{equation*}
    |\mu| = \sum_{i=1}^N \mu_i.
  \end{equation*}

  We denote by $\GT^+_N$ the
  subset of length-N partitions.
\end{definition}

A graphic way to represent a partition $\mu$ is through its
\emph{Young diagram} $Y_\mu$, a collection of $|\mu|$ boxes arranged on
non-increasing rows aligned on the left: with
$\mu_1$ boxes on the first row, $\mu_2$ boxes on the second row,\dots $\mu_N$
boxes on the $N$th row. Some rows may be empty if the corresponding $\mu_k$ is
equal to 0. The correspondence between partitions of length $N$ and
Young diagrams with $N$ (possibly empty) rows is a bijection.

\begin{definition}
 Let $Y,W$ be two Young diagrams. We say that $Y\subset W$ \emph{differ by a
 horizontal strip} if the
  collection of boxes in $Z=W\setminus Y$ contains at most one box in every
  column. We say that they \emph{differ by a vertical strip} if $Z$ contains at
  most one box in every row.

  We say that two non-negative signatures $\lambda$ and $\mu$ \emph{interlace}, and
  write $\lambda \prec \mu$ if $Y_\lambda\subset Y_\mu$ differ by a horizontal
  strip. We say they \emph{cointerlace} and write $\lambda\prec'\mu$ if
  $Y_\lambda\subset Y_\mu$ differ by a vertical strip.
\end{definition}

\begin{definition}
  Let $\lambda\in \GT_N^+$. The \emph{rational Schur function} $s_\lambda$
  associated to $\lambda$ is the homogeneous symmetric function of degree
  $|\lambda|$ in $N$ variables defined as follows
  \begin{enumerate}
  \item If $N=1$, and $\lambda=(\lambda_1)$ then
  \begin{eqnarray*}
  s_{\lambda}(u_1)=u_1^{\lambda_1}.
  \end{eqnarray*}
  \item If $N\geq 2$, and $\lambda=(\lambda_1\geq \lambda_2\geq\ldots\geq \lambda_N)$, then
\begin{equation}
  s_{\lambda}(u_1,\ldots,u_N)=
  \frac{\det_{i,j=1,\ldots,N}(u_i^{\lambda_j+N-j})}{\prod_{1\leq
  i<j\leq N}(u_i-u_j)}.\label{slb}
\end{equation}
\end{enumerate}
\end{definition}

The Schur function defined by (\ref{slb}) is a symmetric function because the numerator and denominator are both alternating, and a polynomial since all alternating polynomials are divisible by the Vandermonde determinant.

Let $\lambda\in\GT_N^+$ be a partition of length $N$. We define the
\emph{counting measure} $m(\lambda)$ corresponding to $\lambda$ as follows.

\begin{equation}
m(\lambda)=\frac{1}{N}\sum_{i=1}^{N}\delta\left(\frac{\lambda_i+N-i}{N}\right).\label{ml}
\end{equation}

Let $\lambda(N)\in \GT_N^+$. Let $\Sigma_N$ be the permutation group of $N$ elements and let $\sigma\in \Sigma_N$. Let 
\begin{eqnarray*}
X=(x_1,\ldots,x_N).
\end{eqnarray*}
Assume that there exists $n$ between 1 and $N$ such that $x_1,...,x_n$ are pairwise distinct and $\{x_1,...,x_n\}=\{x_1,...,x_N\}$. For $j\in[N]$, let
\begin{eqnarray}
\eta_j^{\sigma}(N)=|\{k:k>j,x_{\sigma(k)}\neq x_{\sigma(j)}\}|.\label{et}
\end{eqnarray}
For $1\leq i\leq n$, let
\begin{eqnarray}
\Phi^{(i,\sigma)}(N)=\{\lambda_j(N)+\eta_j^{\sigma}(N):x_{\sigma(j)}=x_i\}\label{pis}
\end{eqnarray}
and let $\phi^{(i,\sigma)}(N)$ be the partition with length $|\{1\leq j\leq N: x_j=x_i\}|$ obtained by decreasingly ordering all the elements in $\Phi^{(i,\sigma)}(N)$. Let $\Sigma_N^{X}$ be the subgroup $\Sigma_N$ that preserves the value of $X$; more precisely
\begin{eqnarray*}
\Sigma_N^{X}=\{\sigma\in \Sigma_N: x_{\sigma(i)}=x_i,\ \mathrm{for}\ i\in[N]\}.
\end{eqnarray*}
Let $[\Sigma/\Sigma_N^X]^r$ be the collection of all the right cosets of $\Sigma_N^X$ in $\Sigma_N$. More precisely,
\begin{eqnarray*}
[\Sigma/\Sigma_N^X]^r=\{\Sigma_N^X\sigma:\sigma\in \Sigma_N\},
\end{eqnarray*}
where for each $\sigma\in \Sigma_N$
\begin{eqnarray*}
\Sigma_N^X\sigma=\{\xi\sigma:\xi\in \Sigma_N^X\}
\end{eqnarray*}
and $\xi\sigma\in \Sigma_N$ is defined by
\begin{eqnarray*}
\xi\sigma(k)=\xi(\sigma(k)),\ \mathrm{for}\ k\in[N].
\end{eqnarray*}

Below is a combinatorial formula which relates the value of a Schur function at a general point to the values of Schur functions at $(1,\ldots,1)$. The formula will be used to study limit shape of perfect matchings on a square-hexagon lattice, and moreover, the formula may also be of independent interest.

\begin{theorem}\label{p421}Under the assumptions above,
 the Schur function can be computed by the following formula
\begin{eqnarray}
\label{ses}s_{\lambda}(x_1,\ldots,x_N)&=&\sum_{\ol{\sigma}\in[\Sigma_N/\Sigma_N^X]^r} \left(\prod_{i=1}^{n}x_i^{|\phi^{(i,\sigma)}(N)|}\right)\left(\prod_{i=1}^{n}s_{\phi^{(i,\sigma)}(N)}(1,\ldots,1)\right)\\
&&\times\left(\prod_{i<j,x_{\sigma(i)}\neq x_{\sigma(j)}}\frac{1}{x_{\sigma(i)}-x_{\sigma(j)}}\right)\notag
\end{eqnarray}
where $\sigma\in \ol{\sigma}\cap \Sigma_N$ is a representative. 
\end{theorem}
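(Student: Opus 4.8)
The plan is to start from the Weyl determinant formula (\ref{slb}) for $s_\lambda(x_1,\ldots,x_N)$ and reorganize the Leibniz expansion of the numerator determinant by grouping columns (equivalently, the symmetric arguments $x_i$) according to which of the $n$ distinct values they equal. First I would rewrite the denominator: since $\prod_{i<j}(x_i-x_j)$ vanishes when some $x_i=x_j$, I work with a regularization, or rather I treat the $x_i$ as formal variables that are only at the end specialized so that coincidences occur in a controlled way; the cleaner route is to fix, for each distinct value $x_i$ ($1\le i\le n$), the block $B_i=\{j:x_j=x_i\}$ of size $N_i=|B_i|$, and observe that the right-hand side of (\ref{ses}) is manifestly a sum over ways of distributing the index set $[N]$ into an ordered assignment compatible with the blocks — which is exactly indexed by the coset space $[\Sigma_N/\Sigma_N^X]^r$, each coset $\ol\sigma$ recording a bijection between $[N]$ and the disjoint union of the blocks up to permutations internal to each block.

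The key computational step is the following. For a representative $\sigma$, split the variables into the ordered list $(x_{\sigma(1)},\ldots,x_{\sigma(N)})$; the quantities $\eta_j^\sigma(N)$ in (\ref{et}) count how many later entries differ in value, so that within the sub-list of positions $j$ with $x_{\sigma(j)}=x_i$, the shifted exponents $\lambda_j(N)+\eta_j^\sigma(N)$ are precisely the exponents one gets after ``pushing past'' the variables of other values — this is the standard bijection between a Schur function in coincident variables and a product of Schur functions in the distinct groups. Concretely, I would take a limit $x_j\to x_i$ inside each block one variable at a time, applying L'Hôpital / the Taylor-expansion argument that turns a Vandermonde in coalescing variables into a derivative, and check that the surviving contribution factorizes as $\prod_i x_i^{|\phi^{(i,\sigma)}|} s_{\phi^{(i,\sigma)}}(1,\ldots,1)$ times the cross-block factor $\prod_{i<j,\,x_{\sigma(i)}\ne x_{\sigma(j)}}(x_{\sigma(i)}-x_{\sigma(j)})^{-1}$; here one uses that $s_{\phi}(u,\ldots,u)=u^{|\phi|}s_\phi(1,\ldots,1)$ and that the leading term of the coalescence limit of a Schur function is itself a Schur function at $(1,\ldots,1)$ with the interlacing shift absorbed into the $\eta_j^\sigma$. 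Summing over all coset representatives $\ol\sigma$ — i.e. over all ways the blocks can be interleaved — reassembles the full Leibniz sum of the original determinant, giving (\ref{ses}).

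I expect the main obstacle to be the bookkeeping in the coalescence step: verifying that the shifts $\eta_j^\sigma(N)$ are exactly the right integers so that the limiting exponents form the decreasingly-ordered partition $\phi^{(i,\sigma)}(N)$, and that no lower-order terms survive (they must cancel against the vanishing factors of the denominator). One must be careful that the assignment of a position $j$ to a block depends only on the coset $\ol\sigma$ and not on the representative $\sigma$ — the internal permutations of a block change the labels but not the multiset $\Phi^{(i,\sigma)}(N)$ nor the product of differences — which is what makes the sum over $[\Sigma_N/\Sigma_N^X]^r$ well-defined; I would check this invariance explicitly. An alternative, perhaps slicker, route is induction on $n$: peel off one distinct value, use the branching/restriction rule $s_\lambda(x_1,\ldots,x_{N-N_n},\underbrace{x_n,\ldots,x_n}_{N_n})=\sum_{\nu} s_\nu(x_1,\ldots,x_{N-N_n})\, x_n^{|\lambda|-|\nu|} s_{\lambda/\nu}(1,\ldots,1)$ over interlacing $\nu$, and match the combinatorics of skew Schur functions in equal variables with the $\eta$-shift description; but the determinantal coalescence argument is the most self-contained and is the one I would write up.
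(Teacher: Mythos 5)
Your global decomposition is the right one and matches the paper's: sum over right cosets $[\Sigma_N/\Sigma_N^X]^r$, factor each coset contribution into a cross-block Vandermonde factor and within-block Schur polynomials, and observe that the shift $\eta_j^\sigma$ produces exactly the staircase offset needed to turn the exponents into the partition $\phi^{(i,\sigma)}(N)$. You also correctly identify the well-definedness issue (invariance under internal block permutations), which is precisely what the paper's Lemmas \ref{lee}, \ref{pee}, \ref{l437} handle. However, your proposed mechanism for the within-block step — coalescing variables one at a time in the Weyl determinant via L'H\^{o}pital/Taylor expansion — differs from what the paper does, and is the harder route. The paper instead starts from the symmetrization form
$s_{\lambda}(w_1,\ldots,w_N)=\sum_{\sigma\in\Sigma_N}\bigl(w_{\sigma(1)}^{\lambda_1}\cdots w_{\sigma(N)}^{\lambda_N}\prod_{i<j}\tfrac{w_{\sigma(i)}}{w_{\sigma(i)}-w_{\sigma(j)}}\bigr)$,
reorganizes the outer sum by cosets $\ol\sigma = \mathcal{S}_{T_n}\cdots\mathcal{S}_{T_1}\sigma$, splits the product into cross-block (constant on the coset) and within-block pieces, and then recognizes each inner sum $\sum_{\xi_h\in\mathcal{S}_{T_h}}$ as nothing other than the symmetrization formula for the smaller Schur polynomial $s_{\phi^{(h,\sigma)}(N)}$ evaluated at coincident arguments $(x_h,\ldots,x_h)$, giving $x_h^{|\phi^{(h,\sigma)}|}s_{\phi^{(h,\sigma)}}(1,\ldots,1)$. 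The coincident-variable limit is thus delegated to a standard fact about the smaller Schur polynomial rather than carried out explicitly on the big determinant; no explicit L'H\^{o}pital bookkeeping is needed. Your approach would likely work but would have to reproduce exactly this bookkeeping by hand, and the cancellation of lower-order terms that you flag as the ``main obstacle'' is precisely what the symmetrization-formula reorganization makes invisible. Two further remarks: the paper actually proves the stronger Proposition \ref{p437} in which the first $k$ arguments are perturbed to $u_1,\ldots,u_k$ — Theorem \ref{p421} is then a specialization $u_j=x_j$ — so the published proof delivers more (it feeds the asymptotic analysis in Section \ref{s4}); and your suggested alternative via the branching rule is plausible but again not the route taken.
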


Theorem \ref{p421} will be proved in Section \ref{fs}; it can also lead to asymptotic results for Schur functions at a general point $(x_1,\ldots,x_N)$; see Section \ref{s4}. In the appendix, we give concrete examples to verify Theorem \ref{p421}. It is straightforward to check that when $N\geq 2$ and $x_1,\ldots,x_N$ are pairwise distinct, the righthand side of (\ref{ses}) recovers (\ref{slb}).

For simplicity, we make the following assumptions.

\begin{assumption}\label{ap423}Let $(x_1,\ldots,x_N)$ be an $N$-tuple of real numbers at which we evaluate the Schur polynomial.
\begin{itemize}
\item $x_1>x_2>\ldots>x_n$; and
\item $N$ is an integral multiple of $n$; and.
\item $\{x_i\}_{i=1}^{N}$ are periodic with period $n$, i.e., $x_{i}=x_{j}$ for $1\leq i,j\leq N$ and $[i\mod n]=[j\mod n]$.
\end{itemize}
\end{assumption}

Let $\ol{\sigma}_0\in [\Si_N/\Si_N^X]^r$ be the unique element in $[\Si_N/\Si_N^X]^r$ satisfying the condition that for any representative $\sigma_0\in\ol{\sigma}_0$, we have
\begin{eqnarray}
x_{\si_0(1)}\geq x_{\si_0(2)}\geq\ldots\geq x_{\si_0(N)}.\label{sz}
\end{eqnarray}

\begin{assumption}\label{ap428}Assume $x_1,\ldots,x_N$ satisfy Assumption \ref{ap423}.

Let $s\in[N]$.
Assume there exists positive integers $K_1,K_2,\ldots K_s$, such that 
\begin{enumerate}
\item $\sum_{t=1}^s K_t=N$;
\item
\begin{eqnarray}
\mu_1>\ldots>\mu_s\label{mi}
\end{eqnarray}
are all the distinct elements in $\{\lambda_1,\lambda_2,\ldots,\lambda_N\}$.
\item
\begin{eqnarray*}
&&\lambda_1=\lambda_2=\ldots=\lambda_{K_s}=\mu_1;\\
&&\lambda_{K_s+1}=\lambda_{K_s+2}=\ldots=\lambda_{K_s+K_{s-1}}=\mu_2;\\
&&\ldots\\
&&\lambda_{\sum_{t=2}^{s}K_t}=\lambda_{1+\sum_{t=2}^{s}K_t}=\ldots=\lambda_{\sum_{t=1}^{s}K_t}=\mu_s;
\end{eqnarray*}
 \item Let 
\begin{eqnarray}
J_i=\{t\in[s]:\exists p\in[n],\ \mathrm{s.t.}\ x_{\sigma_0(p)}=x_i,\mathrm{and}\ \lambda_p=\mu_t\}\label{ji}
\end{eqnarray}
\begin{enumerate}
\item If $1\leq i<j\leq n$, $\ell\in J_i$, and $t\in J_j$, then $\ell<t$.
\item For any $p,q$ satisfying $1\leq p\leq s$ and $1\leq q\leq s$, and $q>p$
\begin{eqnarray*}
C_1N \leq \mu_p-\mu_q\leq C_2N
\end{eqnarray*}
where $C_1$, $C_2$ are constants independent of $N$.
\item $s$ and $n$ are fixed as $N\rightarrow\infty$.
\end{enumerate}
\end{enumerate}
\end{assumption}

Assumption \ref{ap428}(4)(a) may also be interpreted as follows. First of all, we note the following elementary lemma:
\begin{lemma}\label{l424}Let $J_i$ be defined as in (\ref{ji}).
Then for any $1\leq i<j\leq n$, $\ell\in J_i$ and $t\in J_j$, we have $\ell\leq t$.
\end{lemma}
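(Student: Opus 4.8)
The plan is to unwind the definitions of $J_i$ and of the special coset representative $\sigma_0$, and to use the fact that $\lambda$ is weakly decreasing together with the periodic, strictly-ordered structure of $X$ imposed by Assumption \ref{ap423}. Recall that $\sigma_0$ is characterized by $x_{\sigma_0(1)}\ge x_{\sigma_0(2)}\ge\cdots\ge x_{\sigma_0(N)}$; since the $x$'s take exactly the $n$ distinct values $x_1>x_2>\cdots>x_n$, each with multiplicity $N/n$, the sequence $(x_{\sigma_0(p)})_{p=1}^N$ consists of $N/n$ copies of $x_1$, followed by $N/n$ copies of $x_2$, and so on. In particular, if $1\le i<j\le n$, then every index $p$ with $x_{\sigma_0(p)}=x_i$ is strictly smaller than every index $p'$ with $x_{\sigma_0(p')}=x_j$.

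Now fix $\ell\in J_i$ and $t\in J_j$ with $1\le i<j\le n$. By the definition \eqref{ji}, there exist $p,p'\in[N]$ with $x_{\sigma_0(p)}=x_i$, $\lambda_p=\mu_\ell$, and $x_{\sigma_0(p')}=x_j$, $\lambda_{p'}=\mu_t$. By the previous paragraph $p<p'$, and since $\lambda$ is weakly decreasing this gives $\mu_\ell=\lambda_p\ge\lambda_{p'}=\mu_t$. Because $\mu_1>\mu_2>\cdots>\mu_s$ are strictly decreasing (this is \eqref{mi}), the inequality $\mu_\ell\ge\mu_t$ forces $\ell\le t$, which is the claim.

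This argument is essentially immediate once the structure of $\sigma_0$ is made explicit; I do not anticipate a genuine obstacle. The one point requiring a little care is the bookkeeping: $J_i$ is defined in terms of indices $p$ running over $[n]$ in \eqref{ji} (a period of $X$), while $\sigma_0$ is a permutation of $[N]$, so I should be careful that the condition "$x_{\sigma_0(p)}=x_i$ for some $p\in[n]$" is interpreted consistently with the periodicity of $X$ and with the block structure of $(x_{\sigma_0(p)})_p$ described above; this is a matter of matching conventions rather than a substantive difficulty. Note also that Assumption \ref{ap428}(4)(a) strengthens this lemma's conclusion from $\ell\le t$ to $\ell<t$, so the lemma is exactly the "weak" statement one gets for free from monotonicity, with the strict version being an extra hypothesis.
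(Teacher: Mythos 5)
Your proof is correct and follows essentially the same path as the paper's: unwind the definition of $J_i$ to produce indices $p,p'$ with $x_{\sigma_0(p)}=x_i$, $\lambda_p=\mu_\ell$ and $x_{\sigma_0(p')}=x_j$, $\lambda_{p'}=\mu_t$, use the ordering (\ref{sz}) to conclude $p<p'$, then monotonicity of $\lambda$ gives $\mu_\ell\ge\mu_t$, and strict ordering (\ref{mi}) gives $\ell\le t$. Your side remark about the apparent mismatch between ``$p\in[n]$'' in (\ref{ji}) and the paper's use of $p\in[N]$ in its own proof is a fair observation about the paper's notation, not a gap in the argument.
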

\begin{proof}By Assumption \ref{ap423}, if $i<j$, then $x_i>x_j$. For any $\ell\in J_i$ and $t\in J_j$, there exists $1\leq p\leq N$ and $1\leq q\leq N$, such that
\begin{eqnarray*}
x_{\si_0}(p)=x_i>x_j=x_{\si_0}(q);
\end{eqnarray*}
and
\begin{eqnarray*}
\lambda_p=\mu_{\ell};\qquad \lambda_q=\mu_{t}
\end{eqnarray*}
 By the definition of $\si_0$ in (\ref{sz}), we obtain $p<q$. By the definition of partition we have $\lambda_{p}\geq \lambda_{q}$, and therefore $\mu_{\ell}\geq \mu_t$. By (\ref{mi}) $\ell\leq t$.
\end{proof}

Assumption \ref{ap428}(4)(a) actually assumes the strict inequality $l<t$ when $l,t$ satisfy the conditions of Lemma \ref{l424}.
We write $\lambda_1,\ldots,\lambda_N$ in decreasing order, and $x_{\sigma_0(1)},\ldots,x_{\sigma_0(N)}$ in decreasing order, and obtain a $2\times N$ array as follows:
\begin{eqnarray}
\label{crr}\begin{array}{cccc}\lambda_1&\lambda_2&\ldots&\lambda_N\\ x_{\sigma_0(1)}& x_{\sigma_0(2)}&\ldots& x_{\sigma_0(N)} \end{array}.
\end{eqnarray}
In the 1st row, there are exactly $s$ distinct values; while in the 2nd row, there are exactly $n$ distinct values. From (\ref{sz}) and Assumption \ref{ap423} we can see that the 2nd row of (\ref{crr}) is the same as 
\begin{eqnarray*}
x_1,\ldots,x_1,x_2,\ldots,x_2,\ldots,x_n\ldots,x_n
\end{eqnarray*}
where the 1st $\frac{N}{n}$ entries are $x_1$'s, the next $\frac{N}{n}$ entries are $x_2$'s, and so on. Similarly, by Assumption (\ref{ap428}), we can see that the 1st row of (\ref{crr}) is the same as
\begin{eqnarray*}
\mu_1,\ldots,\mu_1,\mu_2,\ldots,\mu_2\ldots,\mu_s,\ldots,\mu_s,
\end{eqnarray*}
where the 1st $K_s$ entries are $\mu_1$'s, the next $K_{s-1}$ entries are $\mu_2$'s, etc.
Then $J_i$ consists of all the indices $t\in[s]$ such that there exists a value $\mu_t$ in the 1st row of (\ref{crr}), which is in the same column as a value $x_i$ in the second row. Then Assumption \ref{ap428}(4)(a) says that for any $j\in[N-1]$, if $x_{\sigma_0(j)}>x_{\sigma_0(j+1)}$, then $\lambda_j>\lambda_{j+1}$. In other words, no index $t\in[s]$ can appear in more than one $J_i$'s for $i\in[n]$; or the collection of sets $\{J_i\}_{i\in[n]}$ are pairwise disjoint. In particular, this implies that $s\geq n$.

As we shall see later in Lemma \ref{cm}, under Assumptions \ref{ap423} and \ref{ap428}, as $N\rightarrow\infty$, the counting measures of  $\phi^{(i,\si_0)}(N)$ converges weakly to a limit measure $\bm_i$.

Let
\begin{eqnarray}
H_{\mathbf{m}_i}(u)=\int_{0}^{\ln(u)}R_{\mathbf{m}_i}(t)dt+\ln\left(\frac{\ln(u)}{u-1}\right)\label{hmi}
\end{eqnarray}
and $\mathbf{R}_{\mathbf{m}_i}$ is the Voiculescu R-transform of $\mathbf{m}_i$ given by
\begin{eqnarray*}
R_{\bm_i}=\frac{1}{S_{\bm_i}^{(-1)}(z)}-\frac{1}{z};
\end{eqnarray*}
Where $S_{\bm_i}$ is the moment generating function for $\bm_i$ given by
\begin{eqnarray}
S_{\bm_i}(z)=z+M_1(\bm_i)z^2+M_2(\bm_i)z^3+\ldots;\label{smi}
\end{eqnarray}
$M_k(\bm_i)=\int_{\RR}x^k\bm_i(dx)$; and $S_{\bm_i}^{-1}(z)$ is the inverse series of $S_{\bm_i}(z)$.
See also Section 2.2 of \cite{bg} for details.

We may further make the assumptions below

\begin{assumption}\label{ap32}Assume $x_{1,N}=x_1>0$ and $(x_{2,N},\ldots,x_{n,N})$ changes with $N$. Assume that for each fixed $N$,  $(x_{1,N},\ldots,x_{n,N})$ satisfy Assumption \ref{ap423}. Suppose that Assumption \ref{ap428} holds. Moreover, assume that
 \begin{eqnarray*}
\liminf_{N\rightarrow\infty} \frac{\log\left(\min_{1\leq i<j\leq n}\frac{x_{i,N}}{x_{j,N}}\right)}{N}\geq \alpha>0,
\end{eqnarray*}
where $\alpha$ is a sufficiently large positive constant independent of $N$.
\end{assumption}

\begin{theorem}\label{tm2}Under Assumptions \ref{ap32} and \ref{ap428}, for each given $\{a_i,b_i\}_{i=1}^{n}$, when $\alpha$ in Assumption \ref{ap32} is sufficiently large, $u_1,\ldots,u_k$ are in an open complex neighborhood of $1$, we have
\begin{eqnarray}
&&\lim_{N\rightarrow\infty}\frac{1}{N}\log \frac{s_{\lambda(N)}(u_1x_{1,N},\ldots,u_kx_{k,N},x_{k+1,N},\ldots,x_{N,N})}{s_{\lambda(N)}(x_{1,N},\ldots,x_{N,N})}=\sum_{i=1}^{k}[Q_i(u_i)]\label{fc}
\end{eqnarray}
where for $1\leq i\leq k$, 
\begin{enumerate}
\item if $[i\mod n]\neq 0$,
\begin{eqnarray*}
Q_i(u)=\frac{H_{\mathbf{m}_{i\mod n}}(u)}{n}-\frac{(n-[i\mod n])\log(u)}{n}.
\end{eqnarray*}
\item if $[i\mod n]=0$,
\begin{eqnarray*}
Q_i(u)=\frac{H_{\mathbf{m}_n}(u)}{n}.
\end{eqnarray*}
\end{enumerate}
Moreover, the convergence of (\ref{fc}) is uniform when $u_1,\ldots,u_k$ are in an open complex neighborhood of $1$.
\end{theorem}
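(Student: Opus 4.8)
The plan is to start from the exact combinatorial identity in Theorem \ref{p421} applied to the numerator $s_{\lambda(N)}(u_1x_{1,N},\ldots,u_kx_{k,N},x_{k+1,N},\ldots,x_{N,N})$ and the denominator $s_{\lambda(N)}(x_{1,N},\ldots,x_{N,N})$, and to extract the exponentially dominant coset from each sum. For the denominator, the $x_i$'s are genuinely $n$-periodic, so $[\Sigma_N/\Sigma_N^X]^r$ indexes ways of distributing the $N$ indices into $n$ groups of size $N/n$; the summand attached to a coset $\ol\sigma$ is a product of $n$ factors $x_i^{|\phi^{(i,\sigma)}(N)|}s_{\phi^{(i,\sigma)}(N)}(1,\ldots,1)$ times the Vandermonde-type factor $\prod_{i<j,\,x_{\sigma(i)}\ne x_{\sigma(j)}}(x_{\sigma(i)}-x_{\sigma(j)})^{-1}$. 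The key claim is that, when $\alpha$ in Assumption \ref{ap32} is large enough, the coset $\ol\sigma_0$ of \eqref{sz} (the ``sorted'' one) dominates: its exponential growth rate beats every competitor by a margin controlled by $\alpha$. I would prove this by a direct estimate — the principal specialization $s_{\phi^{(i,\sigma)}(N)}(1,\ldots,1)$ grows like $\exp(N\,(\text{something bounded}))$ because of Assumption \ref{ap428}(4)(b) (the $\mu_p-\mu_q$ are $\Theta(N)$), while the factor $\prod x_i^{|\phi^{(i,\sigma)}(N)|}$ carries an exponential weight $\exp(\sum_i |\phi^{(i,\sigma)}(N)|\log x_{i,N})$; the latter is uniquely maximized, up to lower order, when the largest $\lambda$-values are paired with the largest $x$-values, which is exactly the configuration selected by $\ol\sigma_0$ together with Assumption \ref{ap428}(4)(a). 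The gap between the top and second configurations is at least of order $N\log(\min_{i<j} x_{i,N}/x_{j,N})\ge N\alpha$ up to bounded corrections from the Schur-at-$(1,\dots,1)$ factors and the Vandermonde factor, so for $\alpha$ large the contribution of all other cosets is exponentially negligible, uniformly for $u$ near $1$.

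Next, after this reduction, both numerator and denominator are, to exponential precision, a single product $\prod_{i=1}^{n} x_i^{|\phi^{(i,\sigma_0)}(N)|} s_{\phi^{(i,\sigma_0)}(N)}(1,\ldots,1)$ times the (common, $u$-independent) Vandermonde factor, which cancels in the ratio. One subtlety: in the numerator the arguments $u_1x_{1,N},\dots,u_kx_{k,N}$ are no longer exactly $n$-periodic, but since $u_1,\dots,u_k$ lie in a small complex neighborhood of $1$ and $x_{1,N},\dots,x_{n,N}$ are $\Theta(1)$-separated on the exponential scale, the combinatorics of which coset dominates is unchanged — the dominant coset still sorts the arguments in decreasing modulus, and the partitions $\phi^{(i,\sigma_0)}(N)$ are the same as in the denominator. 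Thus
$$
\frac1N\log\frac{s_{\lambda(N)}(u_1x_{1,N},\ldots)}{s_{\lambda(N)}(x_{1,N},\ldots)}
= \frac1N\sum_{i=1}^{k}\Bigl(|\phi^{(i,\sigma_0)}(N)|\log u_i\Bigr) + \frac1N\sum_{i=1}^{k}\log\frac{s_{\phi^{(i,\sigma_0)}(N)}(u_i\cdot 1,\ldots)}{s_{\phi^{(i,\sigma_0)}(N)}(1,\ldots,1)} + o(1),
$$
where I have used that replacing $x_{i,N}$ by $u_ix_{i,N}$ in the $i$-th block multiplies $x_i^{|\phi|}$ by $u_i^{|\phi|}$ and replaces $s_{\phi}(1,\dots,1)$ by $s_\phi(u_i,\dots,u_i,\,$rest$)$; more precisely one group the $u_i$'s block by block. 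Here I would invoke the known asymptotics of Schur functions near $(1,\dots,1)$ — precisely the function $H_{\bm_i}$ of \eqref{hmi} from \cite{bg} — applied to each partition $\phi^{(i,\sigma_0)}(N)$, whose counting measure converges weakly to $\bm_i$ by Lemma \ref{cm}: the block contribution is $\exp\bigl(N\cdot\frac{|\phi^{(i,\sigma_0)}(N)|}{N}\log u_i + N\cdot\frac{\text{(length of block)}}{N} H_{\bm_i}(u_i) + o(N)\bigr)$ after accounting for the fact that the $u_i$-specialized block has $N/n$ variables set to $u_i$.

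Finally, I would do the bookkeeping that turns this into the stated formula for $Q_i$. The length of the $i$-th block is $N/n$, so each $H_{\bm_i}$ enters with weight $1/n$. The remaining term $\frac{|\phi^{(i,\sigma_0)}(N)|}{N}\log u_i$ must be combined with the ``leftover'' from the normalization in $H_{\bm_i}$ (the $\ln(\ln u/(u-1))$ piece is already absorbed into $H$) and with the $\eta^{\sigma_0}_j$ shifts in the definition \eqref{pis} of $\phi^{(i,\sigma_0)}$: an index in the $i$-th $x$-block gets its $\lambda$-value shifted up by the number of later indices with a different $x$-value, which under $\ol\sigma_0$ is exactly $(n-[i\bmod n])\cdot N/n$ plus a term depending only on the within-block position, hence $\frac1N|\phi^{(i,\sigma_0)}(N)|\to \frac1n\cdot(\text{mean of }\bm_i$-shifted block$)$, and extracting the pure $\log u$ coefficient produces precisely the term $-\frac{(n-[i\bmod n])\log u}{n}$ when $[i\bmod n]\ne 0$ and no such term when $[i\bmod n]=0$. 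Uniformity in $u$ over a complex neighborhood of $1$ follows because every estimate above (the coset-domination gap, the Schur asymptotics from \cite{bg}, the weak convergence of counting measures) is uniform there, the error terms being $o(N)$ uniformly. I expect the main obstacle to be the first step — making the coset-domination estimate quantitative enough, with the competing exponential rates from $s_{\phi^{(i,\sigma)}(N)}(1,\dots,1)$ and the Vandermonde denominator carefully bounded, so that one can see explicitly how large $\alpha$ must be; once that separation is in hand, the rest is an application of \cite{bg} plus careful index accounting.
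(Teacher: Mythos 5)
Your overall strategy — identify the dominant coset $\overline{\sigma}_0$ in the expansion of Theorem~\ref{p421}/Proposition~\ref{p437}, show it wins by an exponentially large margin when $\alpha$ is large, and then apply the asymptotics from \cite{bg} to the surviving $s_{\phi^{(i,\sigma_0)}(N)}$ factors — is exactly the route the paper takes (via Proposition~\ref{p436} and Lemma~\ref{l440}). But there is a concrete error in the bookkeeping that produces the $-\frac{(n-[i\bmod n])\log u}{n}$ piece of $Q_i$.

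You assert that after extracting the dominant coset, the Vandermonde-type factor is ``common, $u$-independent'' and cancels in the ratio. This is false: in Proposition~\ref{p437} the factor is $\prod_{i<j,\,x_{\sigma(i)}\neq x_{\sigma(j)}}\frac{1}{w_{\sigma(i)}-w_{\sigma(j)}}$ with $w_i=u_ix_{i,N}$ for $i\le k$, so it genuinely depends on $u_1,\dots,u_k$. In the paper this is exactly where the $\log u$ term comes from: under Assumption~\ref{ap32} the ratios $x_{i,N}/x_{j,N}$ blow up, so $x_{\sigma_0(i)}-x_{\sigma_0(j)}\approx x_{\sigma_0(i)}$ and $w_{\sigma_0(i)}-w_{\sigma_0(j)}\approx w_{\sigma_0(i)}=u_{\sigma_0(i)}x_{\sigma_0(i)}$ whenever $\sigma_0(i)$ belongs to an earlier (larger-$x$) block; counting pairs gives exactly $S_2 = -\frac{1}{n}\sum_{i}(n-[i\bmod n])\log u_i$. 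Your alternative account — that the term arises from $\frac{1}{N}|\phi^{(i,\sigma_0)}(N)|\log u_i$ combined with the $\eta^{\sigma_0}_j$ shifts — does not work, for two reasons. First, the prefactor in Proposition~\ref{p437} is $x_i^{|\phi^{(i,\sigma)}(N)|}$ (not $w_i^{|\phi^{(i,\sigma)}(N)|}$), so rescaling the arguments by $u_i$ changes the Schur-function evaluation $s_{\phi^{(i,\sigma)}}(\tfrac{u_i}{x_i},\dots,1,\dots)$ but does \emph{not} multiply that prefactor by $u_i^{|\phi|}$; and since the $u$'s in a block are generically distinct, homogeneity gives no clean $u_i^{|\phi|}$ factor either. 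Second, $|\phi^{(i,\sigma_0)}(N)|=\Theta(N^2)$ under Assumption~\ref{ap428}, so $\frac{1}{N}|\phi^{(i,\sigma_0)}(N)|\log u_i$ diverges as $N\to\infty$; your displayed identity cannot hold as written. Once you replace that step by tracking the $u$-dependent Vandermonde ratio (the paper's $S_2$), the rest of your argument matches the paper's.
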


Theorem \ref{tm2} is proved in Section \ref{s4}.

\subsection{Square-hexagon lattice}

Consider a doubly-infinite binary sequence indexed by integers
$\ZZ=\{\ldots,-2,-1,0,1,2,\ldots\}$.
\begin{equation}
  \check{a}=(\ldots,a_{-2},a_{-1},a_0,a_1,a_2,\ldots)\in\{0,1\}^{\ZZ}.\label{ca}
\end{equation}

We now define a bipartite plane graph $\mathrm{SH}(\check{a})$, called \emph{whole-plane square-hexagon lattice} associated with the  sequence
$\check{a}$. The vertex set of $\mathrm{SH}(\check{a})$ is a subset of $\frac{\ZZ}{2}\times \frac{\ZZ}{2} $.
Each vertex of $\mathrm{SH}(\check{a})$ is either black or white, and we
identify the vertices with points on the plane. 
 For $m\in \ZZ $, the black vertices have $y$-coordinate $m$; while the white
 vertices have $y$-coordinate $m-\frac{1}{2}$. We will label all the vertices
 with $y$-coordinate $t$ $\left(t\in\frac{\ZZ}{2}\right)$ as vertices in the $(2t)$th row. We further
 require that for each $m\in\ZZ$,
 \begin{itemize}
\item  each black vertex on the $(2m)$th row is adjacent to two white vertices in the $(2m+1)$th row; and
\item if $a_m=1$, each white vertex on the $(2m-1)$th row is adjacent to exactly one black vertex in the $(2m)$th row; 
 if $a_m=0$, each white vertex on the $(2m-1)$th row is adjacent to two black vertices in the $(2m)$th row. 

  \end{itemize}
 See Figure \ref{lcc}.

The square-hexagon lattice defined above is related to the
rail-yard graph; see \cite{bbccr}.

\begin{figure}
\subfloat[Structure of $\mathrm{SH}(\check{a})$ between the $(2m)$th row and the $(2m+1)$th row]{\includegraphics[width=.6\textwidth]{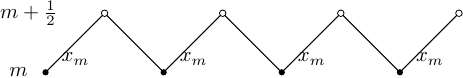}}\\
\subfloat[Structure of $\mathrm{SH}(\check{a})$ between the $(2m-1)$th row and the $(2m)$th row when $a_m=0$]{\includegraphics[width = .6\textwidth]{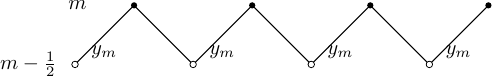}}\\
\subfloat[Structure of $\mathrm{SH}(\check{a})$ between the $(2m-1)$th row and the $(2m)$th row when $a_m=1$]{\includegraphics[width = .55\textwidth]{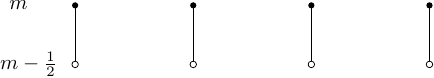}}
\caption{Graph structures of the square-hexagon lattice on the $(2m-1)$th, $(2m)$th, and $(2m+1)$th rows depend on the values of $(a_m)$. Black vertices are along the $(2m)$th row, while white vertices are along the $(2m-1)$th and $(2m+1)$th row.}
\label{lcc}
\end{figure}

We shall assign edge weights to the whole-plane square-hexagon lattice
$\SH(\check{a})$ in the following way.

\begin{assumption}\label{apew}
  For $m\in\ZZ$, we assign weight $x_m>0$ to each NE-SW edge joining the
  $(2m)$th row  to the $(2m+1)$th row of $\mathrm{SH}(\check{a})$. We assign
  weight $y_m>0$ to each NE-SW edge joining the $(2m-1)$th row to the $(2m)$th
  row of $\mathrm{SH}(\check{a})$, if such an edge exists. We assign weight $1$
  to all the other edges. 
\end{assumption}

It is straightforward to check that in the graph $\SH(\check{a})$, either all the faces on a row are hexagons, or all the faces on a row are squares, depending on the corresponding entry of $\check{a}$. A \emph{contracting square-hexagon lattice} is built from a whole-plane
square-hexagon lattice as follows:

\begin{definition}\label{dfr}
  Let $N\in \NN$. Let $\Omega=(\Omega_1,\ldots,\Omega_N)$ be an $N$-tuple of
  positive integers, such that  $1=\Omega_1<\Omega_2<\cdots<\Omega_{N}$. Set
  $m=\Omega_N-N$.
  The contracting square-hexagon lattice $\mathcal{R}(\Omega,\check{a})$ is a
  subgraph of $\mathrm{SH}(\check{a})$ with $2N$ or $2N+1$ rows of vertices.
  We shall now enumerate the rows of $\mathcal{R}(\Omega,\check{a})$
  inductively, starting from the bottom as follows:
  \begin{itemize}
    \item The first row consists of vertices $(i,j)$ with $i=\Omega_1-\frac{1}{2},\ldots,\Omega_N-\frac{1}{2}$ and $j=\frac{1}{2}$. We call this row the boundary row of $\mathcal{R}(\Omega,\check{a})$.
    \item When $k=2s$, for $s=1,\ldots N$,  the $k$th row consists of vertices $(i,j)$ with $j=\frac{k}{2}$ and incident to at least one vertex in the $(2s-1)th$ row of the whole-plane square-hexagon lattice $\mathrm{SH}(\check{a})$ lying between the leftmost vertex and rightmost vertex of the $(2s-1)$th row of $\mathcal{R}(\Omega,\check{a})$
    \item When $k=2s+1$, for $s=1,\ldots N$,  the $k$th row consists of vertices $(i,j)$ with $j=\frac{k}{2}$ and incident to two vertices in the $(2s)$th row of  of $\mathcal{R}(\Omega,\check{a})$.
  \end{itemize}
\end{definition}

  The transition from an odd row to the next even row in a contracting
  square-hexagon lattice can be of two kinds
  depending on whether vertices are connected to one or two vertices of the row
  above them.

  \begin{definition}
    \label{defI1I2}
  Let $I_1$ (resp.\@ $I_2$) be the set of indices $j$ such that vertices
  of the $(2j-1)$th row are connected to one vertex (resp.\@ two vertices) of
  the $(2j)$th row.
  In terms of the sequence $\check{a}$,

  \begin{equation*}
    I_1=\{k\in \{1,\dots,N\}\ |\ a_k=1\},\quad
    I_2=\{k\in \{1,\dots,N\}\ |\ a_k=0\}.
  \end{equation*}
\end{definition}

  The sets $I_1$ and $I_2$ form a partition of $\{1,\dots,N\}$, and we have
  $|I_1|=N-|I_2|$.

\begin{figure}
  \includegraphics{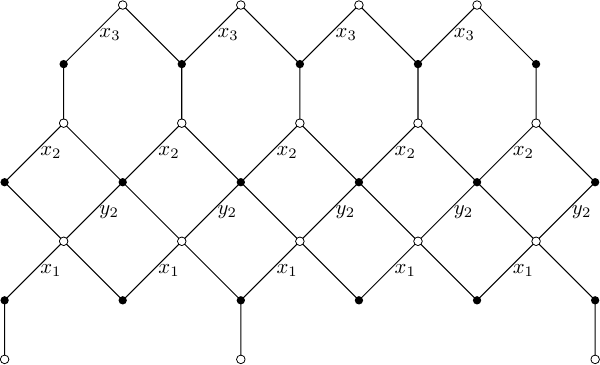}
\caption{Contracting square-hexagon lattice with $N=3$, $m=3$, $\Omega=(1,3,6), (a_1,a_2,a_3)=(1,0,1)$.}
\label{fig:SH}
\end{figure}

\subsection{Dimer model}

\begin{definition}\label{dfvl}A dimer configuration, or a perfect matching $M$ of a contracting square-hexagon lattice $\mathcal{R}(\Omega,\check{a})$ is a set of edges $((i_1,j_1),(i_2,j_2))$, such that each vertex of $\mathcal{R}(\Omega,\check{a})$ belongs to an unique edge in $M$.
 The set of perfect matchings of $\mathcal{R}(\Omega,\check{a})$ is denoted by
  $\mathcal{M}(\Omega,\check{a})$.
\end{definition}

\begin{definition} The partition function of the dimer model of a finite graph
$G$ with edge weights $(w_e)_{e\in E(G)}$ is given by
\begin{equation*}
Z=\sum_{M\in \mathcal{M}}\prod_{e\in M}w_e,
\end{equation*}
where $\mathcal{M}$ is the set of all perfect matchings of $G$. The
Boltzmann dimer probability measure on $M$ induced by the weights $w$ is
thus defined by declaring that probability of a perfect matching is equal to
\begin{equation*}
  \frac{1}{Z}\prod_{e\in M} w_e.
\end{equation*}
\end{definition}

\begin{definition}
  Let $M\in \mathcal{M}(\Omega,\check{a})$ be a perfect matching of
  $\mathcal{R}(\Omega,\check{a})$. We call an edge $e=((i_1,j_1),(i_2,j_2))\in
  M$ a \emph{$V$-edge} if $\max\{j_1,j_2\}\in\NN$ (i.e.\@ if its higher
  extremity is black) and we call it a \emph{$\Lambda$-edge}
  otherwise. In other words, the edges going upwards starting from an odd row
  are $V$-edges and those ones starting from an even row are $\Lambda$-edges. We
  also call the corresponding vertices-$(i_1,j_1)$ and $(i_2,j_2)$ $V$-vertices
  and $\Lambda$-vertices accordingly.
\end{definition}

We shall associate to each perfect matching in $\mathcal{M}(\Omega,\check{a})$
a sequence of partitions, one for each row of the graph.

\begin{construction}\label{ct}To the boundary row $\Omega=(\Omega_1<\cdots<\Omega_N)$ of a contracting
square-hexagon lattice is naturally associated a partition $\omega$
of length $N$ by:
\begin{equation*}
  \omega=(\Omega_N-N,\dotsc,\Omega_1-1).
\end{equation*}

  Let $j\in\{2,\dots,2N+1\}$. Assume that the $j$th row of
  $\mathcal{R}(\Omega,\check{a})$ has $n_j$ V-vertices and $m_j$
  $\Lambda$-vertices. The a dimer configuration at the $j$th row of $\mathcal{R}(\Omega,\check{a})$
  corresponds to a partition $\mu\in \GT_{n_j}^+$, such that 
  \begin{itemize}
  \item $\mu=(\mu_1,\ldots,\mu_{n_j})$;
  \item We label all the $V$-vertices on the $j$th row by the 1st $V$-vertex, the 2nd $V$-vertex, \ldots, the $n_j$th $V$-vertex,
  such that the $1$st $V$-vertex is the rightmost $V$-vertex on the $j$th row.
  for $1\leq k\leq n_j$, $\mu_k$ is the number of $\Lambda$-vertices to the left of the $k$th $V$-vertex.
  \end{itemize}
\end{construction}

Then we have 
\begin{theorem}[\cite{BL17} Theorem 2.13]
  For given $\Omega$, $\check{a}$, let $\omega$ be the partition associated to
  $\Omega$. Then the construction~\ref{ct} defines a
  bijection between the set of perfect matchings
  $\mathcal{M}(\Omega,\check{a})$ and the set $S(\omega,\check{a})$ of
  sequences of partitions
  \begin{equation*}
    \{(\mu^{(N)},\nu^{(N)},\dots,\mu^{(1)}, \nu^{(1)}, \mu^{(0)}\}
  \end{equation*}
  where the partitions satisfy the following properties:
  \begin{itemize}
    \item All the parts of $\mu^{(0)}$ are equal to 0;
    \item The partition $\mu^{(N)}$ is equal to $\omega$;
    \item For $0\leq i\leq N$, $\mu^{(i)}\in \GT_i^{+}$.
    \item The signatures satisfy the following (co)interlacement relations:
      \begin{equation*}
        \mu^{(N)} \prec' \nu^{(N)} \succ \mu^{(N-1)} \prec' \cdots
        \mu^{(1)} \prec' \nu^{(1)} \succ \mu^{(0)}.
      \end{equation*}
  \end{itemize}
  Moreover, if $a_m=1$, then $\mu^{(N+1-k)}=\nu^{(N+1-k)}$.
  \label{myb}
\end{theorem}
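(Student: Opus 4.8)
The plan is to reduce the statement to a composition of local bijections attached to consecutive rows, and then to check that the local relations assemble into the asserted chain with the correct boundary values. First I would observe that the edges of $\mathcal{R}(\Omega,\check{a})$ decompose into disjoint ``slabs'': for $s=1,\dots,N$ the ``$V$-slab'' $\mathcal{S}_s^{V}$ consisting of the edges between row $2s-1$ and row $2s$, and the ``$\Lambda$-slab'' $\mathcal{S}_s^{\Lambda}$ consisting of the edges between row $2s$ and row $2s+1$ (with $\mathcal{S}_N^{\Lambda}$ absent when $\mathcal{R}(\Omega,\check{a})$ has $2N$ rows). A perfect matching $M\in\mathcal{M}(\Omega,\check{a})$ is the disjoint union of its restrictions to these slabs, and each vertex on an interior row, having neighbours only in the two slabs meeting it, is covered inside exactly one of them. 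From the definitions of $V$- and $\Lambda$-edges one checks that $\mathcal{S}_s^{V}$ covers precisely the $V$-vertices of its two rows while $\mathcal{S}_s^{\Lambda}$ covers precisely the $\Lambda$-vertices of its two rows; hence Construction~\ref{ct} attaches to $M$ one partition per row, which I will write from bottom to top as $\mu^{(N)},\nu^{(N)},\dots,\mu^{(1)},\nu^{(1)},\mu^{(0)}$.

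Next I would work out the local dictionary: fix a slab and the $V$/$\Lambda$ labellings of its bottom and top rows -- equivalently, the subsets of vertices that the slab matching must cover -- and determine when a compatible matching of the slab exists and whether it is unique. In a $\Lambda$-slab $\mathcal{S}_s^{\Lambda}$ every black vertex of row $2s$ is joined to its two white neighbours of row $2s+1$ (one of the two being absent at each end because of the contraction), so the subgraph induced on the prescribed vertices is a disjoint union of paths; it admits a perfect matching -- then necessarily unique -- precisely when every component has even order, and translating to partitions through the particle picture this becomes the interlacing $\nu^{(N+1-s)}\succ\mu^{(N-s)}$. For a $V$-slab $\mathcal{S}_s^{V}$ with $a_s=0$ the local picture is again a honeycomb half-strip, now with the two rows interchanged, and the same argument produces the cointerlacing $\mu^{(N+1-s)}\prec'\nu^{(N+1-s)}$. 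For a $V$-slab $\mathcal{S}_s^{V}$ with $a_s=1$ each white vertex of row $2s-1$ has a single neighbour on row $2s$ and the two rows lie over the same horizontal positions, so the slab matching uses only vertical edges; the $V$/$\Lambda$ patterns on the two rows then coincide, which gives $\mu^{(N+1-s)}=\nu^{(N+1-s)}$, the ``moreover'' clause, and is the degenerate case of $\prec'$. Along the way the membership $\mu^{(i)}\in\GT_i^{+}$ and the remaining combinatorial constraints of the statement fall out from the slab geometry. Each of these assertions is a direct inspection of the graph structures in Figure~\ref{lcc}, the only point demanding care being the behaviour at the leftmost and rightmost vertices of each row.

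It then remains to identify the two extreme rows. The bottom row is the boundary row $\Omega$; having no row below it, all of its vertices are $V$-vertices, and evaluating Construction~\ref{ct} -- with the vacant sites to the left of the rightmost occupied site in the role of $\Lambda$-vertices -- gives $\mu^{(N)}=\omega=(\Omega_N-N,\dots,\Omega_1-1)$. Dually the top row has no row above it, so none of its vertices is a $V$-vertex and $\mu^{(0)}$ is the empty partition, i.e.\ all of its parts equal $0$. Composing the local bijections of the previous paragraph over all the slabs, under these two boundary identifications, then produces a bijection between $\mathcal{M}(\Omega,\check{a})$ and the set $S(\omega,\check{a})$ of partition sequences obeying the listed (co)interlacement relations.

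I expect the genuine obstacle to lie in the local dictionary of the second paragraph -- establishing existence and uniqueness of the slab matching from the boundary data and matching the resulting combinatorial condition to $\succ$, to $\prec'$, or to equality, with all the edge effects of the contracting region treated correctly. The rest is bookkeeping, and a fully detailed argument along these lines appears in \cite{BL17}.
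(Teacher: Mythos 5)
The paper does not prove this theorem; it is imported verbatim from \cite{BL17}, so there is no in-paper argument to compare against. That said, your slab-by-slab decomposition is exactly the standard route (and, as far as one can tell from how the result is used here, the route taken in \cite{BL17} itself): split $M$ into its restrictions to the row-pairs, show each restriction is forced to be a unique local matching once the $V/\Lambda$ labels on its two rows are fixed, identify the solvability condition with $\prec'$, $\succ$, or equality according to the slab type and the value of $a_s$, and glue. Your identification of the bottom row with $\omega$ and the top row with the zero partition, and the observation that $a_s=1$ forces vertical edges and hence $\mu^{(N+1-s)}=\nu^{(N+1-s)}$, are all correct.

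One small point worth sharpening is your phrase ``the local picture is again a honeycomb half-strip, now with the two rows interchanged, and the same argument produces the cointerlacing.'' The $\Lambda$-slab and the $a_s=0$ $V$-slab have isomorphic bipartite structures, so simply swapping the roles of the rows would reproduce the \emph{same} combinatorial condition, not turn a horizontal-strip condition into a vertical-strip one. The actual reason the $V$-slab yields $\prec'$ rather than $\succ$ is that in a $\Lambda$-slab the lower row carries one more $V$-vertex than the upper row (lengths drop by one, giving the $\succ$ horizontal-strip inclusion), whereas in the $V$-slab the two rows carry the same number of $V$-vertices and the matching shifts each occupied site by at most one step to the left or right, which in the Young-diagram translation is a vertical strip. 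Your concluding caveat already flags the local dictionary as the delicate step, so you are aware of where the work is; just be sure the dictionary is derived from the particle-count bookkeeping rather than from a structural symmetry that does not by itself distinguish $\prec$ from $\prec'$.
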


For $N\geq 1$, let $\lambda(N)\in \GT_N^+$ be the boundary partition satisfying Assumption \ref{ap428}.  Let
\begin{eqnarray*}
\Omega=(\Omega_1<\Omega_2<\ldots<\Omega_N)=(\lambda_N(N)+1,\lambda_{N-1}(N)+2,\ldots,\lambda_1(N)+N)
\end{eqnarray*}
Indeed, $\Omega_1,\ldots,\Omega_N$ are the locations of the $N$ remaining vertices on the bottom boundary of the contracting square-hexagon lattice.
Under Assumption \ref{ap428}, we may assume
\begin{eqnarray}
\Omega&=&(A_1,A_1+1,\ldots.B_1-1,B_1,\label{abt}\\
&&A_2,A_2+1,\ldots,B_2-1,B_2,\ldots,A_s,A_s+1,\ldots,B_s-1,B_s).\notag
\end{eqnarray}
where 
\begin{eqnarray*}
B_i-A_i+1=K_i.
\end{eqnarray*}
and
\begin{eqnarray*}
\sum_{i=1}^{s}(B_i-A_i+1)=N.
\end{eqnarray*}
Suppose as $N\rightarrow\infty$,
\begin{eqnarray}
A_i(N)=a_iN+o(N),\qquad B_i(N)=b_iN+o(N),\qquad \mathrm{for}\ i\in[s],\label{abi}
\end{eqnarray}
and $a_1<b_1<\ldots<a_s<b_s$ are fixed parameters independent of $N$ and satisfy $\sum_{i=1}^{s}(b_i-a_i)=1$. Under Assumption \ref{ap428}, it is straightforward to check that for $i\in[s]$
\begin{eqnarray}
b_i&=&\lim_{N\rightarrow\infty}\frac{\mu_{s-i+1}+\sum_{t=1}^{i}K_t}{N}\label{dbi}\\
a_i&=&\lim_{N\rightarrow\infty}\frac{\mu_{s-i+1}+\sum_{t=1}^{i-1}K_t}{N}\label{dai}
\end{eqnarray}

Here are the main theorems concerning the limit counting measures of partitions corresponding to dimer configurations on all the horizontal levels of a contracting square-hexagon lattice. In Theorem \ref{tm5} we give explicit integral formulas for all the moments of limit counting measures at all horizontal levels, from which we can see that the limit counting measure at each horizontal level is deterministic.

\begin{theorem}\label{tm5}Suppose Assumptions \ref{ap423}, \ref{ap32} and \ref{ap428} hold. Let $\kappa\in(0,1)$ be a positive number. Let $\rho_{\lfloor (1-\kappa)N\rfloor}$ be a probability measure on $\GT_{\lfloor (1-\kappa)N\rfloor}^+$, which is the probability measure for partitions corresponding to the random $V$-edges incident to the $\lfloor(1-\kappa)N\rfloor$th row (counting from the top) of white vertices in a dimer configuration of a contracting square-hexagon lattice $\mathcal{R}(\Omega,\check{a})$. Let $\bm[\rho_{\lfloor (1-\kappa)N\rfloor}]$ be the corresponding random counting measure. Then as $N\rightarrow\infty$, $\bm[\rho_{\lfloor (1-\kappa)N\rfloor}]$ converge in probability, in the sense of moments to a deterministic measure $\bm^{\kappa}$, whose moments are given by
\begin{eqnarray*}
\int_{\RR}x^{p}\textbf{m}^{\kappa}(dx)=
%=\sum_{i=1}^{n}\frac{1}{2(p+1)\pi \mathbf{i}}\oint_{x_{t+i}}\frac{dz}{z}\left(zQ_{\kappa}'(z)+\sum_{j=1}^{n}\frac{z}{n(z-x_{j})}\right)^{p+1}
\frac{1}{2(p+1)\pi \mathbf{i}}\sum_{i=1}^{n}\oint_{C_{1}}\frac{dz}{z}\left(zQ_{i,\kappa}'(z)+\frac{n-i}{n}+\frac{z}{n(z-1)}\right)^{p+1}
\end{eqnarray*}
where for $i\in[n]$
\begin{eqnarray*}
Q_{i,\kappa}(z)=\left\{\begin{array}{cc}\frac{1}{(1-\kappa)n}\left[
H_{\bm_i}(z)-(n-i)\log z+\kappa\sum_{l\in[n]\cap I_2}\log\frac{1+y_lzx_1}{1+y_lx_1}\right]&\mathrm{if}\ i=1\\\frac{1}{(1-\kappa)n}\left[H_{\bm_i}(z)-(n-i)\log z\right]
&\mathrm{otherwise}\end{array}\right.
\end{eqnarray*}
and for $i\geq n+1$,
\begin{eqnarray*}
Q_{i,\kappa}(z)=\left\{\begin{array}{cc}Q_{(i\mod n),\kappa}(z),&\mathrm{if}\ (i\mod n)\neq 0\\Q_{n,\kappa}(z),&\mathrm{if}\ (i\mod n)= 0 \end{array}\right.
\end{eqnarray*}
\end{theorem}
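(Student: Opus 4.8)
The plan is to combine three ingredients: the Schur-process description of dimer configurations on $\mathcal R(\Omega,\check a)$ (Theorem~\ref{myb} and the weight factorization of \cite{BL17}), the asymptotics of Schur functions at a general point (Theorem~\ref{tm2}), and the law of large numbers converting Schur-generating-function asymptotics into moments of counting measures. Write $k=\lfloor(1-\kappa)N\rfloor$. By Theorem~\ref{myb} a perfect matching of $\mathcal R(\Omega,\check a)$ is an interlacing/cointerlacing chain $\omega=\mu^{(N)}\prec'\nu^{(N)}\succ\mu^{(N-1)}\prec'\cdots\succ\mu^{(0)}=\varnothing$ whose Boltzmann weight factorizes over the steps, with the weights $x_m$ carried by the $\succ$-steps and $y_m$ by the nontrivial $\prec'$-steps, and $\rho_k$ is the law of $\mu^{(k)}$. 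Cutting the chain at $\mu^{(k)}$ and summing over the two halves gives
\begin{equation*}
\PP(\mu^{(k)}=\nu)=\frac1Z\,s_{\omega/\nu}\big(\text{weights of the }N-k\text{ lower levels}\big)\cdot s_{\nu}\big(\text{weights of the }k\text{ upper levels}\big),
\end{equation*}
where a ``weight specialization'' consists of the ordinary variables $x_m$ together with the dual variables $y_m$ of the square rows in that range. The branching rule for (super-)Schur functions then identifies the Schur generating function of $\rho_k$ at the point $b=(x_{N-k+1},\dots,x_N)$ formed by the ordinary weights of the upper levels with
\begin{equation*}
\sS_{\rho_k}(u_1,\dots,u_k)=\frac{s_\omega(x_1,\dots,x_{N-k},u_1x_{N-k+1},\dots,u_kx_N)}{s_\omega(x_1,\dots,x_N)}\cdot\Pi_k(u_1,\dots,u_k),
\end{equation*}
where $\Pi_k$ is the explicit factor recording the dual variables of the lower-level square rows (with $\Pi_k\equiv1$ when all $a_m=1$); this is the computation of \cite{BL17} adapted to piecewise boundary conditions.

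Next, fixing a finite number $j$ of perturbed coordinates and setting $u_{j+1}=\dots=u_k=1$, I feed the resulting Schur ratio into Theorem~\ref{tm2} (applied with $j$ perturbed variables, in the role of its $k$), whose proof also handles the perturbation of any fixed sub-collection of variables after re-sorting them into the order required by Assumption~\ref{ap423}: under Assumptions~\ref{ap423}, \ref{ap32} and \ref{ap428}, uniformly for $u_1,\dots,u_j$ in a fixed complex neighbourhood of $1$, the quantity $\frac1N\log$ of the Schur ratio converges to a finite sum, over the perturbed coordinates, of the functions assembled from the limiting measures $\bm_{i\bmod n}$ of Lemma~\ref{cm} and the logarithmic terms recording the $\eta$-shifts of (\ref{et}). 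For $\Pi_k$ I estimate $\frac1N\log\Pi_k$ directly: since $\alpha$ in Assumption~\ref{ap32} is large, the weights $x_2,\dots,x_n$ are vanishingly small relative to the fixed weight $x_1$, so each factor $(1+y_m u x)/(1+y_m x)$ produced by a perturbed slot whose weight $x$ lies among $x_2,\dots,x_n$ tends to $1$, and only the perturbed slots carrying $x_1$ survive; as the lower levels contain $\sim\kappa N/n$ full periods, each supplying one square row of every type $l\in[n]\cap I_2$, the surviving contribution to $\frac1N\log\Pi_k$ is $\frac{\kappa}{n}\sum_{l\in[n]\cap I_2}\log\frac{1+y_lu_1x_1}{1+y_lx_1}$, attached to the phase-$1$ perturbation (the $Q_{i,\kappa}$ with $i\equiv1\pmod{n}$) and nothing to the others. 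Finally, $\sS_{\rho_k}$ generates a measure on $\GT^+_k$ with $k\sim(1-\kappa)N$, so one divides by $k$ rather than $N$; this introduces the overall factor $\frac1{1-\kappa}$, and collecting everything yields, uniformly near $1$,
\begin{equation*}
\lim_{N\to\infty}\frac1k\log\sS_{\rho_k}(u_1,\dots,u_j,1,\dots,1)=\sum_{i=1}^{j}Q_{i,\kappa}(u_i),
\end{equation*}
with $Q_{i,\kappa}$ as in the statement, extended $n$-periodically in $i$.

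To conclude, I apply the law of large numbers for counting measures of partitions whose Schur generating functions at a general point have asymptotics of periodic type—the argument of \cite{bg} in the form developed in \cite{BL17}: if the limit above holds uniformly on a complex neighbourhood of $1$ with each $Q_{i,\kappa}$ analytic there and $n$-periodic in $i$, then $\bm[\rho_k]$ converges in probability, in the sense of moments, to the deterministic measure $\bm^{\kappa}$ whose $p$-th moment equals
\begin{equation*}
\frac1{2(p+1)\pi\mathbf{i}}\sum_{i=1}^{n}\oint_{C_1}\frac{dz}{z}\left(zQ_{i,\kappa}'(z)+\frac{n-i}{n}+\frac{z}{n(z-1)}\right)^{p+1},
\end{equation*}
where the terms $\frac{n-i}{n}+\frac{z}{n(z-1)}$ come from the shift $+\,N-i$ in the counting measure (\ref{ml}), split according to the residue of the index modulo $n$, and $C_1$ is a fixed small contour around $z=1$ enclosing the relevant singularities. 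This is the asserted formula, and since every moment is deterministic, so is $\bm^{\kappa}$.

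The hard part will be the second stage. One must verify that $\lambda(N)$ together with the re-sorted, cyclically shifted weights still satisfies Assumptions~\ref{ap423}, \ref{ap428} and \ref{ap32}, so that Theorem~\ref{tm2} applies with convergence uniform on a fixed complex neighbourhood of $1$—it is exactly this uniformity that licenses passing from generating-function asymptotics to moments via Cauchy's formula in the last step. One also has to isolate the square rows contributing to the $y$-term (only those of the lower levels, and only through the perturbed slots carrying $x_1$, all other terms being washed out because $\alpha$ is large), keep the normalizations straight ($1/k$ versus $1/N$, hence the factor $1/(1-\kappa)$), and carry out the logarithmic bookkeeping so that the term $-(n-i)\log z$ inside $Q_{i,\kappa}$ and the term $\frac{n-i}{n}$ in the integrand combine correctly. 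The remaining points—concentration of the moments and interchanging the limit with the contour integral—are standard.
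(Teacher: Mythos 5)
Your proposal follows essentially the same route as the paper: Schur-process decomposition via Theorem~\ref{myb} and the Schur generating function formula (the paper cites this directly as Lemma~\ref{lmm212} from \cite{BL17}, you re-sketch the cut-and-glue argument that proves it), then Theorem~\ref{tm2} for the Schur ratio, then a separate estimate showing the $\Pi_k$ factor washes out except for the $x_1$ slots, and finally the moment-extraction machinery of \cite{bg}/\cite{BL17}. Your bookkeeping of the $y$-term — the lower levels contribute $\sim\kappa N/n$ periods worth of square rows, only the perturbed slots whose weight is $x_1$ survive because $\alpha$ large forces $x_2,\dots,x_n$ to $0$ exponentially, and the $1/k$ normalization introduces the $1/(1-\kappa)$ — matches the paper exactly and is correct.

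The one place you compress substantially relative to the paper is the last step: the paper does not invoke the law of large numbers as a black box but actually carries out the moment extraction. It introduces the differential operator $\mathcal{D}_{p,N}$ conjugated by the Vandermonde in the $x$-weighted variables, verifies the eigenrelation $\mathcal{D}_{p,N}\,s_\lambda(U_{X,N})=\sum_i(\lambda_i+N-i)^p\,s_\lambda(U_{X,N})$, expands by Leibniz, sorts the sum by whether the index falls in $S_{\mathcal N}(i)$ or not, and applies Lemma~\ref{l81} to pass to the limit; and then it separately establishes concentration by comparing the index-set families $A_{i,i'}$ and $C_{i,i'}$ to show the variance of moments vanishes. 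Your proposal treats all of this as a citation to ``\cite{bg} in the form developed in \cite{BL17}.'' That is a legitimate shortcut — the argument there is precisely the general-point periodic version you need — but be aware the paper redoes it because the phase-shift bookkeeping (the reindexing $j=(i+t)\bmod n$ attaching $\bm_{\overline{t+j}}$ to the slot carrying $x_{\overline{t+j}}$) is not entirely off-the-shelf: re-sorting the symmetric function's arguments into the order required by Assumption~\ref{ap423} before applying Theorem~\ref{tm2} is exactly the re-indexing that produces the cyclic shift and ultimately the $\frac{n-i}{n}$ and $\frac{z}{n(z-1)}$ terms in the integrand. Your one-line gloss on where those terms come from is accurate in spirit, but this is where a careful write-up would need to show the work.
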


Theorem \ref{tm5} is proved in Section \ref{s5}.

The \emph{frozen boundary} of the limit shape is the boundary curve of the region where each type of edge has probability 0 and 1 to occur in the perfect matching (frozen region).
The algebraic curve we obtain for the frozen boundary has special properties,
  that can be read from its dual curve, 
  as described in the definition and the theorem below:

\begin{definition}[\cite{KO07}]
  \label{df43}
  A degree $d$ real algebraic curve $C\subset \RR P^2$ is \emph{winding} if: 
  \begin{enumerate}
    \item it intersects every line $L\subset \RR P^2$ in at least $d-2$ points
      counting multiplicity; and
    \item there exists a point $p_0\in \RR P^2$ called center, such that every
      line through $p_0$ intersects $C$ in $d$ points.
  \end{enumerate}
  The dual curve of a winding curve is called a \emph{cloud curve}.
\end{definition}

\begin{theorem}\label{tm3}Suppose Assumptions \ref{ap423}, \ref{ap32} and \ref{ap428} hold. If $|I_2\cap[n]|\in \{0,1\}$, then the frozen boundary consists of $n$ disjoint cloud curves.
\end{theorem}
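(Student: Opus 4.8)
The plan is to derive the parametric equation of the frozen boundary from the limit counting measures computed in Theorem \ref{tm5}, and then to read off from this equation that the boundary decomposes into $n$ pieces, each of which is a cloud curve in the sense of Definition \ref{df43}. Recall from Theorem \ref{tm5} that the moments of the limit counting measure $\bm^\kappa$ at the horizontal level $\lfloor(1-\kappa)N\rfloor$ are encoded by the functions $Q_{i,\kappa}(z)$, which under the hypothesis $|I_2\cap[n]|\in\{0,1\}$ take a particularly simple form (the correction term $\kappa\sum_{l\in[n]\cap I_2}\log\frac{1+y_lzx_1}{1+y_lx_1}$ contributes at most one logarithmic summand). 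The first step is to identify the liquid region: the support of $\bm^\kappa$ is an interval whose endpoints move with $\kappa$, and the liquid region is the open set swept out by these endpoints as $\kappa$ ranges over $(0,1)$; its complement (within the rescaled lattice) is the frozen region. The boundary between the two is obtained, as in \cite{KO07} and \cite{BL17}, as the locus where the generating function has a double critical point, i.e.\@ where the relevant complex critical point equation has a real double root.

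The second step is to write down that critical-point equation explicitly. Using the integrand appearing in Theorem \ref{tm5}, namely $zQ_{i,\kappa}'(z)+\frac{n-i}{n}+\frac{z}{n(z-1)}$ summed over $i\in[n]$, one obtains a function whose critical points in $z$ govern the local behavior of the height function; the frozen boundary is the image, under the map $\kappa\mapsto$ (spatial coordinate), of the values of $\kappa$ for which this function has a double real root. Because $H_{\bm_i}(z)$ is built from the $R$-transform of $\bm_i$ via \eqref{hmi}, and because under Assumption \ref{ap32} the points $x_{i,N}$ are exponentially separated (so that in the scaling limit the contributions of different residue classes $i\bmod n$ decouple), the full critical equation factors — up to lower-order terms that vanish in the limit — into $n$ independent equations, one for each $i\in[n]$. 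This decoupling is exactly the mechanism by which the liquid region acquires $n$ connected components, as already anticipated in the introduction and in the discussion following Theorem \ref{tm2}.

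The third step is to recognize each of the $n$ resulting curves as a cloud curve. For this I would follow the strategy of \cite{KO07} and \cite{BL17}: exhibit the dual curve and verify the two conditions of Definition \ref{df43}. Concretely, each factor of the critical equation, after clearing denominators, is a polynomial relation $P_i(z;t_1,t_2)=0$ between the critical point $z$ and the two rescaled spatial coordinates $(t_1,t_2)$; eliminating $z$ yields the algebraic equation of the $i$th component of the frozen boundary. One then checks that its dual is winding: condition (1) (every line meets the dual in at least $d-2$ points) follows from the fact that the defining polynomial has the same shape as the one analyzed in \cite{BL17} for the $1\times n$ periodic case with a single boundary segment, where windingness was established; condition (2) (existence of a center $p_0$ through which every line meets the curve in $d$ points) comes from the monotonicity of the critical-point map in the tangent direction, which in turn reflects the fact that the density of $\bm^\kappa$ is a genuine probability density (values in $[0,1]$) with endpoints depending monotonically on $\kappa$. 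The presence of at most one square face per period (the hypothesis $|I_2\cap[n]|\le 1$) guarantees that the extra logarithmic term does not raise the degree of $P_i$ or destroy this monotonicity, so the same degree count and the same windingness argument go through verbatim.

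The main obstacle I expect is the decoupling step: one must show rigorously that the $o(N)$ and exponentially-small cross-terms coming from the interaction between different residue classes $i\bmod n$ genuinely do not contribute to the limiting critical equation, so that the frozen boundary really is a disjoint union rather than a single connected curve that merely looks like $n$ pieces. This requires a careful quantitative version of Theorem \ref{tm2} — uniform control of the error in \eqref{fc} and its derivatives on a complex neighborhood of $1$, using the largeness of $\alpha$ in Assumption \ref{ap32} — to justify differentiating the limit and locating its double roots. Once this analytic separation is in hand, the identification of each component as a cloud curve is essentially a repetition of the argument in \cite{BL17}, with the bookkeeping of the single possible square-face term being the only genuinely new (but routine) ingredient.
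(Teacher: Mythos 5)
Your overall strategy — split the Stieltjes transform of $\bm^\kappa$ into $n$ pieces indexed by $i\in[n]$, show each piece yields a single curve, and identify that curve as a cloud curve via Definition~\ref{df43} — matches the paper's outline, but there are two substantive misidentifications. First, the ``decoupling'' you worry about is not an asymptotic issue requiring quantitative error control in Theorem~\ref{tm2}: by the time one reaches the frozen boundary, the decomposition is \emph{exact}. Theorem~\ref{tm5} already expresses each moment of $\bm^\kappa$ as a sum $\sum_{i=1}^n$ of contour integrals, so $\mathrm{St}_{\bm^\kappa}(x)=\sum_{i=1}^n\log z_i(x)$ where $z_i(x)$ solves $F_{i,\kappa}(z)=x$; each $F_{i,\kappa}$ is a fixed, explicit rational expression, and there are no cross-terms to kill. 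The analytic work that actually carries the proof is showing (Proposition~\ref{p72} and Lemma~\ref{l34}) that each equation $F_{i,\kappa}(z)=\chi/(1-\kappa)$ has \emph{at most one} pair of complex conjugate roots, plus Lemma~\ref{l70}-type monotonicity of the real roots in $x$, which together let one read the density of $\bm^\kappa$ from the sign of $\Im z_i$ and identify the liquid region; your proposal does not mention these, and without the ``at most one pair'' count the cloud-curve classification (Proposition~\ref{p79}) does not get off the ground. Also, the hypothesis $|I_2\cap[n]|\le 1$ does \emph{not} leave the degree unchanged: when $|I_2\cap[n]|=1$ the curve $C_1$ has class $2(D_1+1)$ rather than $D_1+1$; what the hypothesis buys is that the argument of~\cite{bk} applies to control the extra rational factor $\kappa z/(z+c_r)$.

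Second, and more importantly, your proposed mechanism does not actually establish the ``disjoint'' part of the statement. Exact decoupling of the $n$ equations does not prevent the resulting curves $C_1,\ldots,C_n$ from crossing in the $(\chi,\kappa)$-plane, and no amount of sharpening the error bound in \eqref{fc} addresses this — the curves are already exactly determined, and the question is a geometric one about where they sit. The paper handles disjointness by a separate, concrete argument (Theorems~\ref{l71} and~\ref{l74}): after a change of variables $\tilde\chi_i=n\chi+\kappa(n-i)$, each $\tilde C_i$ is the frozen boundary of a uniform hexagon (or Aztec-type) model with boundary data $\bm_i$, hence is tangent to explicit lines determined by the support of $\bm_i$ and is confined to a bounded region $R_i$ between those tangent lines and $\kappa=0$, $\kappa=1$; under Assumption~\ref{ap428} (which forces $r_1>r_2>\cdots>r_s$, equivalently the ``large gap'' $C_1 N\le\mu_p-\mu_q$) these regions $R_1,\ldots,R_n$ are pairwise disjoint, so the $C_i$ cannot meet. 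This confinement-to-disjoint-boxes argument is the genuinely new ingredient your proposal is missing; you would need to supply it, or an equivalent, before the word ``disjoint'' in the theorem is justified.
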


Theorem \ref{tm3} is proved in Section \ref{s7}.

\section{Schur Polynomial at $(x_1,\ldots,x_N)$ and Schur polynomials at $(1,\ldots,1)$: combinatorial results} \label{fs}

In this section, we prove the combinatorial formula to compute the Schur function at $(x_1,\ldots,x_N)$ by the values of Schur functions at $(1,\ldots,1)$, as stated in Theorem \ref{p421}. We shall prove a general formula to compute the Schur polynomial (Proposition \ref{p437}) at $(w_1,\ldots,w_N)\in \CC^N$, where the variable $(w_1,\ldots,w_N)$ differs from $(x_1,\ldots,x_N)$ by at most $k$ components, and then obtain Theorem \ref{p421} as a special case when $(w_1,\ldots,w_N)=(x_1,\ldots,x_N)$. Proposition \ref{p437} will also be used to obtain asymptotical results of Schur polynomials (Theorem \ref{tm2}). We start with the following lemma.

\begin{lemma}\label{lee}For any $\xi\in \Sigma_N^X$, $\si\in \Si_N$ and $1\leq j\le N$, we have
\begin{eqnarray*}
\eta_j^{\xi\si}(N)=\eta_j^{\si}(N).
\end{eqnarray*}
\end{lemma}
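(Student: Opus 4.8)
The plan is a direct unfolding of the definitions, using only the defining property of $\Sigma_N^X$. Recall that $\eta_j^{\si}(N)=|\{k:k>j,\ x_{\si(k)}\neq x_{\si(j)}\}|$, and that $\xi\in\Si_N^X$ means precisely that $x_{\xi(m)}=x_m$ for every $m\in[N]$. The key observation is that for the composition $\xi\si$ we have, for each $k\in[N]$,
\begin{eqnarray*}
x_{(\xi\si)(k)}=x_{\xi(\si(k))}=x_{\si(k)},
\end{eqnarray*}
where the last equality applies the defining property of $\xi$ with the index $m=\si(k)$ (note $\si(k)\in[N]$ since $\si$ is a permutation).

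Given this, I would simply substitute into the definition: for any $1\leq j\leq N$,
\begin{eqnarray*}
\eta_j^{\xi\si}(N)=|\{k:k>j,\ x_{(\xi\si)(k)}\neq x_{(\xi\si)(j)}\}|=|\{k:k>j,\ x_{\si(k)}\neq x_{\si(j)}\}|=\eta_j^{\si}(N),
\end{eqnarray*}
where the middle equality uses the displayed identity applied to both $k$ and $j$. This completes the argument.

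There is no genuine obstacle here; the statement is essentially a bookkeeping lemma isolating the fact that $\eta_j^{\si}(N)$ depends on $\si$ only through the induced map $k\mapsto x_{\si(k)}$, which is left unchanged by precomposition of $\si$ with any element of $\Si_N^X$. The only point requiring a moment's care is the convention $\xi\si(k)=\xi(\si(k))$ (as fixed in the paragraph defining right cosets), so that the outer permutation $\xi$ is the one whose stabilizing property we invoke; with the opposite convention one would instead note $x_{\si(\xi(k))}$ and the sets $\{k:k>j\}$ would not match up directly, so it is worth remarking explicitly that we are using $\xi$ on the outside.
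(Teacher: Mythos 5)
Your proof is correct and matches the paper's argument exactly: both unfold the definition of $\eta_j^{\si}(N)$ and use the defining property $x_{\xi(m)}=x_m$ of $\xi\in\Si_N^X$ (applied at $m=\si(k)$ and $m=\si(j)$) to show the two index sets coincide. Your extra remarks on the composition convention are sound but not needed beyond what the paper already records.
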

\begin{proof}Since $\xi\in \Sigma_N^X$, we have
\begin{eqnarray*}
\eta_j^{\xi\si}(N)=|\{k:k>j,x_{\xi\sigma(k)}\neq x_{\xi\si(j)}\}|=|\{k:k>j,x_{\sigma(k)}\neq x_{\si(j)}\}|=\eta_j^{\si}(N).
\end{eqnarray*}
\end{proof}

\begin{lemma}\label{pee}For any $\xi\in \Sigma_N^X$, $\si\in \Si_N$ and $1\leq i\leq n$, we have
\begin{eqnarray*}
\phi^{(i,\sigma)}(N)=\phi^{(i,\xi\si)}(N),
\end{eqnarray*}
as elements in $\GT_{|\{j\in[N]:x_j=x_i\}|}$.
\end{lemma}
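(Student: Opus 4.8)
The plan is to unwind the definition \eqref{pis} of $\Phi^{(i,\sigma)}(N)$ for the permutation $\xi\si$ and compare it with the one for $\si$, using Lemma \ref{lee} to control the $\eta$-shifts. Fix $\xi\in\Sigma_N^X$ and $\si\in\Sigma_N$, and fix $1\le i\le n$. By definition, $\Phi^{(i,\xi\si)}(N)=\{\lambda_j(N)+\eta_j^{\xi\si}(N):x_{\xi\si(j)}=x_i\}$. Since $\xi\in\Sigma_N^X$ we have $x_{\xi(k)}=x_k$ for all $k$, hence $x_{\xi\si(j)}=x_{\si(j)}$, so the index set $\{j:x_{\xi\si(j)}=x_i\}$ is exactly $\{j:x_{\si(j)}=x_i\}$. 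By Lemma \ref{lee}, $\eta_j^{\xi\si}(N)=\eta_j^{\si}(N)$ for every $j$. Therefore the two multisets $\Phi^{(i,\xi\si)}(N)$ and $\Phi^{(i,\si)}(N)$ consist of exactly the same values $\lambda_j(N)+\eta_j^{\si}(N)$ over exactly the same range of $j$, i.e.\ they are equal as multisets.

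Finally, $\phi^{(i,\sigma)}(N)$ (resp.\ $\phi^{(i,\xi\si)}(N)$) is obtained from $\Phi^{(i,\sigma)}(N)$ (resp.\ $\Phi^{(i,\xi\si)}(N)$) by arranging its elements in weakly decreasing order; since the two multisets agree and have the same cardinality $|\{j\in[N]:x_j=x_i\}|$ (which, because $x_{\xi\si(j)}=x_{\si(j)}$ and $\si$ is a bijection, equals $|\{j\in[N]:x_{\si(j)}=x_i\}|=|\{j\in[N]:x_j=x_i\}|$), their decreasing rearrangements coincide. Hence $\phi^{(i,\sigma)}(N)=\phi^{(i,\xi\si)}(N)$ as elements of $\GT_{|\{j\in[N]:x_j=x_i\}|}$.

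I do not expect any genuine obstacle here: the statement is an immediate bookkeeping consequence of Lemma \ref{lee} together with the invariance $x_{\xi(k)}=x_k$ built into the definition of $\Sigma_N^X$. The only point requiring a word of care is to note that $\Phi^{(i,\cdot)}(N)$ should be read as a multiset (repeated values are kept with multiplicity, matching the length of $\phi^{(i,\cdot)}(N)$), so that equality of multisets — not merely of underlying sets — is what yields equality of the ordered partitions; this is exactly why one needs Lemma \ref{lee} applied for each individual $j$ rather than just a cardinality count.
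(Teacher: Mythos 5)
Your argument is correct and follows essentially the same route as the paper: use $x_{\xi\sigma(j)}=x_{\sigma(j)}$ (since $\xi\in\Sigma_N^X$) to match the index sets, apply Lemma \ref{lee} to match the $\eta$-shifts, conclude $\Phi^{(i,\sigma)}(N)=\Phi^{(i,\xi\sigma)}(N)$, and then observe that the decreasing rearrangements agree. Your remark about treating $\Phi^{(i,\cdot)}(N)$ as a multiset is a sensible clarification of what the paper leaves implicit.
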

\begin{proof}By Lemma \ref{lee}, we have
\begin{eqnarray*}
\Phi^{(i,\sigma)}(N)=\{\lambda_j(N)+\eta_j^{\sigma}(N):x_{\sigma(j)}=x_i\}=\{\lambda_j(N)+\eta_j^{\xi\sigma}(N):x_{\xi\sigma(j)}=x_i\}=\Phi^{(i,\xi\sigma)}(N).
\end{eqnarray*}
Then the lemma follows from the fact that $\phi^{(i,\sigma)}(N)$ (\resp.\ $\phi^{(i,\xi\sigma)}(N)$) is the partition obtained by decreasingly ordering all the elements in $\Phi^{(i,\sigma)}(N)$ (\resp.\ $\Phi^{(i,\xi\sigma)}(N)$)
\end{proof}

Let $k\in[N]$. Let
\begin{eqnarray}
\label{dw}w_{i}=\left\{\begin{array}{cc}u_i&\mathrm{if}\ 1\leq i\leq k\\x_i&\mathrm{if}\ k+1\leq i\leq N\end{array}\right.
\end{eqnarray}

Assume 
\begin{eqnarray}
\label{dk}k=qn+r,\qquad \mathrm{where}\ r<n,
\end{eqnarray}
and $q,r$ are positive integers.

For $i\in[n]$, let
\begin{eqnarray}
T_i=\{j:j\in[N], x_j=x_i\}.\label{dti}
\end{eqnarray}

Let $\mathcal{S}_{T_i}$ be the subgroup of $\Sigma_N$ consisting of all  the permutations within $I_i$ while preserving all the elements outside $I_i$. Note that
\begin{eqnarray*}
\mathcal{S}_{T_i}\subset \Sigma_N^X,\qquad \forall\ i\in[n]\\
\mathcal{S}_{T_i}\cap\mathcal{S}_{T_j}=\{id\}, \qquad \forall\ i\neq j,
\end{eqnarray*}
where $id$ is the identity in $\Si_N$. Indeed, $\Sigma_N^X$ is the direct product of $\mathcal{S}_{T_n}$, $\mathcal{S}_{T_{n-1}}$,\ldots, $\mathcal{S}_{T_1}$. 

\begin{lemma}\label{l437}Let $\ol{\sigma}\in[\Sigma_N/\Sigma_N^X]^r$. Assume that $\si_1,\si_2\in\ol{\si}$ be two distinct representatives. Then we have
\begin{eqnarray}
\prod_{i<j,x_{\sigma_1(i)}\neq x_{\sigma_1(j)}}\frac{1}{w_{\sigma_1(i)}-w_{\sigma_1(j)}}=\prod_{i<j,x_{\sigma_2(i)}\neq x_{\sigma_2(j)}}\frac{1}{w_{\sigma_2(i)}-w_{\sigma_2(j)}}\label{12e}
\end{eqnarray}
\end{lemma}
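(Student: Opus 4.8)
The plan is to show that the product in question depends only on the right coset $\ol{\sigma}$, not on the choice of representative. Since two representatives $\sigma_1,\sigma_2\in\ol{\sigma}$ differ by left multiplication by an element of $\Sigma_N^X$, it suffices to prove that for any $\xi\in\Sigma_N^X$ the product is unchanged when $\sigma$ is replaced by $\xi\sigma$. So first I would reduce the statement to showing
\[
\prod_{i<j,\,x_{\sigma(i)}\neq x_{\sigma(j)}}\frac{1}{w_{\sigma(i)}-w_{\sigma(j)}}
=\prod_{i<j,\,x_{\xi\sigma(i)}\neq x_{\xi\sigma(j)}}\frac{1}{w_{\xi\sigma(i)}-w_{\xi\sigma(j)}}
\qquad\text{for all }\xi\in\Sigma_N^X.
\]

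Next, the key observation: because $\xi\in\Sigma_N^X$ we have $x_{\xi(m)}=x_m$ for every $m\in[N]$, hence $w_{\xi(m)}=w_m$ as well (the $w_i$ in \eqref{dw} are functions of the value $x_i$ together with the index-class; more carefully, $\xi$ only permutes indices within a block of equal $x$-values, and by the periodicity Assumption \ref{ap423} together with the definition \eqref{dw}, one checks $w_{\xi(m)}=w_m$ — this is the point that needs a short verification). Granting this, the term indexed by the pair $(i,j)$ on the right-hand side is $\frac{1}{w_{\sigma(i)}-w_{\sigma(j)}}$ whenever $x_{\xi\sigma(i)}\neq x_{\xi\sigma(j)}$, which by $x_{\xi(m)}=x_m$ is exactly the condition $x_{\sigma(i)}\neq x_{\sigma(j)}$. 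Therefore the two products are literally the same product over the same index set $\{(i,j): i<j,\ x_{\sigma(i)}\neq x_{\sigma(j)}\}$, term by term, and equality is immediate.

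Thus the proof splits into two clean steps: (1) the coset-to-$\xi$ reduction, which is purely formal; and (2) the identity $w_{\xi(m)}=w_m$ for $\xi\in\Sigma_N^X$, after which the two products coincide termwise. The main (and only) obstacle is step (2): one must be careful that the ``modified'' variables $w_i$ from \eqref{dw}, which replace $x_1,\dots,x_k$ by $u_1,\dots,u_k$, are still constant on the orbits of $\Sigma_N^X$. This uses that $\Sigma_N^X$ permutes within blocks of indices sharing the same $x$-value, that the block structure is $n$-periodic (Assumption \ref{ap423}), and that $k=qn+r$ — so within each $x$-value class the first $q$ (or $q+1$) indices carry $u$'s and the rest carry $x$'s in a way compatible with any within-block permutation only after one has fixed, say, the convention for which indices in a block get replaced. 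In fact, to make step (2) fully rigorous one may need the mild additional hypothesis that the replacement in \eqref{dw} respects the periodic block structure; in the present setting this holds because $i\le k$ iff the pair (block, position-in-block) lies below the cutoff determined by $q$ and $r$, a condition preserved by $\Sigma_N^X$. Once this bookkeeping is in place, the lemma follows with no computation.
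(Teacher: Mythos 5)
The central claim in your step (2), that $w_{\xi(m)}=w_m$ for all $\xi\in\Sigma_N^X$, is false, and this is not a bookkeeping issue that can be patched. Here is a concrete counterexample to the claim (in the form the lemma actually needs): take $N=4$, $n=2$, $X=(x_1,x_2,x_1,x_2)$, $k=3$ (so $q=1$, $r=1$), hence $w=(u_1,u_2,u_3,x_2)$. Then $\xi=(2\,4)\in\Sigma_N^X$ (it swaps two indices with the same $x$-value), but $w_{\xi(2)}=w_4=x_2\neq u_2=w_2$. The hedge you offer — that $i\le k$ is ``a condition preserved by $\Sigma_N^X$'' — is exactly where this fails: $\Sigma_N^X$ permutes freely within each class $T_i=\{j:x_j=x_i\}$, and these classes straddle the cutoff $k$. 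Even for $\xi$ that do happen to preserve $\{1,\dots,k\}$, the $u_m$ are generic independent variables, so $w_{\xi(m)}=u_{\xi(m)}\neq u_m=w_m$ as soon as $\xi(m)\neq m$. Consequently the right-hand product is not equal to the left-hand one term-by-term, and the ``literally the same product'' conclusion does not follow.

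The lemma is nevertheless true, but for a subtler reason than termwise equality, and this is what your argument misses. Replacing $\sigma$ by $\xi\sigma$ rearranges the factors and, crucially, flips the sign of some of them (since $\frac{1}{w_a-w_b}$ and $\frac{1}{w_b-w_a}$ differ by $-1$). The content of the lemma is that these sign flips cancel in pairs. The paper's proof makes this precise: reduce to the case where $\xi$ is a single transposition in $\Sigma_N^X$, say swapping positions $a<b$ with $x_{\sigma(a)}=x_{\sigma(b)}$, then split the product into a factor $D_1^{(\sigma)}$ (pairs involving at most one of $a,b$, or neither) whose terms merely permute among themselves, and a factor $D_2^{(\sigma)}=\prod_{a<i<b}\frac{1}{(w_{\sigma(a)}-w_{\sigma(i)})(w_{\sigma(i)}-w_{\sigma(b)})}$ in which each index $i$ contributes two factors that both change sign under the swap, so their product is unchanged. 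Your proof has no mechanism to produce this pairing of sign changes, so it cannot be completed without essentially reconstructing the paper's argument from scratch.
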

\begin{proof}Assume $\si_1=\xi\si_2$, where $\xi\in \Si_N^X$. We claim $\xi$ is a product of transpositions in $\Si_N^X$. Indeed, since $\Si_N^X$ is a direct product of $\mathcal{S}_{T_1}$, $\mathcal{S}_{T_2}$,\ldots, $\mathcal{S}_{T_n}$, we have
\begin{eqnarray*}
\xi=\prod_{k=1}^n\xi_k,
\end{eqnarray*}
where $\xi_k\in \mathcal{S}_{T_k}$. For each $k\in[n]$, since $\mathcal{S}_{T_k}$ is isomorphic to $\Si_{|T_k|}$ (the permutation group of $|T_k|$ elements), $\xi_k$ is the product of transpositions in $\mathcal{S}_{T_k}\subset \Sigma_{N}^X$. Therefore for each $\xi\in\Sigma_N^X$, $\xi$ is a product of transpositions in $\Si_N^X$.

Now it suffices to show that for each transposition $\eta\in \Si_N^X$ and $\sigma\in\Si_N$,  we have
\begin{eqnarray}
\prod_{i<j,x_{\eta\sigma(i)}\neq x_{\eta\sigma(j)}}\frac{1}{w_{\eta\sigma(i)}-w_{\eta\sigma(j)}}=\prod_{i<j,x_{\sigma(i)}\neq x_{\sigma(j)}}\frac{1}{w_{\sigma(i)}-w_{\sigma(j)}}\label{ir}
\end{eqnarray}
If $\eta$ is a transposition of two elements not in $\{u_1,u_2,\ldots,u_k\}$, then
\begin{eqnarray*}
\eta\in \Si_N^W:=\{\si\in\Si_N: w_i=w_{\sigma(i)}, \forall\ i\in[N]\}. 
\end{eqnarray*}
In this case (\ref{ir}) obviously holds. 

We now check that if $\eta$ is a transposition involving elements in $\{u_1,\ldots,u_k\}$, the identity (\ref{ir}) still holds. Without loss of generality, assume that $\eta=(u_{\si(a)},w_{\si(b)})$, where $a\in[k]$, $b>a$ and $b\in[N]$. Given that $\eta\in\Si_N^X$, we must have
$x_{\si(a)}=x_{\si(b)}$.
Then
\begin{eqnarray*}
\prod_{i<j,x_{\eta\sigma(i)}\neq x_{\eta\sigma(j)}}\frac{1}{w_{\eta\sigma(i)}-w_{\eta\sigma(j)}}
:=D_1^{(\eta\si)} D_2^{(\eta\si)}\\
\prod_{i<j,x_{\sigma(i)}\neq x_{\sigma(j)}}\frac{1}{w_{\sigma(i)}-w_{\sigma(j)}}
:=D_1^{(\si)} D_2^{(\si)}
\end{eqnarray*}
where
\begin{eqnarray*}
D_1^{(\si)}&:=&\left[\prod_{i<j,\{i,j\}\cap \{a,b\}=\emptyset,x_{\si(i)}\neq x_{\si(j)}}\frac{1}{w_{\si(i)}-w_{\si(j)}}\right]\times \left[
\prod_{i<a,x_{\si(i)}\neq x_{\si(a)}}\frac{1}{w_{\si(i)}-w_{\si(a)}}\right]\\
&&\times\left[
\prod_{i<a,x_{\si(i)}\neq x_{\si(b)}}\frac{1}{w_{\si(i)}-w_{\si(b)}}\right]\times\left[
\prod_{j>b,x_{\si(j)}\neq x_{\si(a)}}\frac{1}{w_{\si(a)}-w_{\si(j)}}\right]\\
&&\times \left[
\prod_{j>b,x_{\si(j)}\neq x_{\si(b)}}\frac{1}{w_{\si(b)}-w_{\si(j)}}\right]=D_1^{(\eta\si)};
\end{eqnarray*}
and
\begin{eqnarray*}
D_2^{(\si)}&:=&\prod_{a<i<b}\left[\frac{1}{(w_{\si(a)}-w_{\si(i)})(w_{\si(i)}-w_{\si(b)})}\right]=D_2^{(\eta\sigma)}
\end{eqnarray*}
Then the lemma follows.
\end{proof}

\begin{proposition}\label{p437}Let $\{w_i\}_{i\in[N]}$ and $k$ be given by (\ref{dw}) and (\ref{dk}), respectively. Then we have the following formula
\begin{eqnarray}
&&\label{sws}s_{\lambda}(w_1,\ldots,w_N)\\
&=&\sum_{\ol{\sigma}\in[\Sigma_N/\Sigma_N^X]^r} \left(\prod_{i=1}^{n}x_i^{|\phi^{(i,\sigma)}(N)|}\right)\left(\prod_{i=1}^{r}s_{\phi^{(i,\sigma)}(N)}\left(\frac{u_i}{x_i},\frac{u_{n+i}}{x_i}\ldots,\frac{u_{qn+i}}{x_i},1,\ldots,1\right)\right)\notag\\
&&\times\left(\prod_{i=r+1}^{n}s_{\phi^{(i,\sigma)}(N)}\left(\frac{u_i}{x_i},\frac{u_{n+i}}{x_i}\ldots,\frac{u_{(q-1)n+i}}{x_i},1,\ldots,1\right)\right)\notag\\
&&\times\left(\prod_{i<j,x_{\sigma(i)}\neq x_{\sigma(j)}}\frac{1}{w_{\sigma(i)}-w_{\sigma(j)}}\right)\notag
\end{eqnarray}
where $\sigma\in \ol{\sigma}\cap \Sigma_N$ is a representative. 
\end{proposition}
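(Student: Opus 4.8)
The approach is to prove Proposition \ref{p437} by induction on $k$, or equivalently, to reduce it to the case $k=0$ which is precisely Theorem \ref{p421}. Actually, a cleaner route is to prove it directly from the definition of the Schur function by a change-of-variables argument that mirrors the proof of Theorem \ref{p421}. First I would recall that the Schur function $s_\lambda(w_1,\ldots,w_N)$ is the ratio of the generalized Vandermonde determinant $\det(w_i^{\lambda_j+N-j})$ to the ordinary Vandermonde $\prod_{i<j}(w_i-w_j)$. The key combinatorial idea is that among the variables $w_1,\ldots,w_N$, the values are not all distinct: we have $n$ distinct $x$-values $x_1>\cdots>x_n$, but the first $k$ slots carry the perturbed values $u_1,\ldots,u_k$ which are generic (near $1$ times the corresponding $x$). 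Grouping slots by which $x_i$ they ``belong to'' via the periodicity, the slots assigned to $x_i$ number $N/n$, of which $q+1$ (if $i\le r$) or $q$ (if $i>r$) carry a $u$-variable and the rest carry the literal value $x_i$.

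The heart of the argument is to expand the determinant $\det_{i,j}(w_i^{\lambda_j+N-j})$ by the Cauchy–Binet / Laplace-type row decomposition according to the coset decomposition $[\Sigma_N/\Sigma_N^X]^r$: each coset $\ol\sigma$ picks out, for each $i\in[n]$, the set of columns $j$ whose exponents $\lambda_j+N-j$ get matched against the rows labeled by $x_i$. After re-sorting the exponents within each block, the shift by $\eta_j^\sigma(N)$ in \eqref{pis} accounts exactly for the difference between ``column index in the full matrix'' and ``column index within the $x_i$-block,'' so that the sub-determinant over the $x_i$-block becomes $x_i^{|\phi^{(i,\sigma)}|}$ times a Schur function in the variables $\{w_{\sigma(j)}/x_i : x_{\sigma(j)}=x_i\}$; by periodicity these variables are $u_i/x_i, u_{n+i}/x_i,\ldots$ together with a string of $1$'s, giving exactly the Schur factors in \eqref{sws}. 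The residual Vandermonde cross-terms between different blocks assemble into $\prod_{i<j,\,x_{\sigma(i)}\neq x_{\sigma(j)}}(w_{\sigma(i)}-w_{\sigma(j)})^{-1}$, while the within-block Vandermonde factors are absorbed into the block Schur functions. Lemma \ref{l437} guarantees that this last product is independent of the choice of representative $\sigma\in\ol\sigma$, and Lemma \ref{pee} together with Lemma \ref{lee} guarantees the same for the partitions $\phi^{(i,\sigma)}$ and their sizes, so the sum over cosets is well defined.

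The main obstacle, and the step requiring the most care, is the bookkeeping in the determinant expansion: one must verify that the sign coming from reordering rows and columns to bring the block structure into place exactly cancels the sign coming from the Vandermonde denominators when they are split into within-block and cross-block parts. Concretely, expanding $\det(w_i^{\lambda_j+N-j})$ as a signed sum over ways of partitioning $\{1,\ldots,N\}$ into the $n$ column-blocks, and then recognizing each block's contribution as a generalized Vandermonde (hence $x_i^{|\phi^{(i,\sigma)}|}$ times a Vandermonde times a Schur function in the $1$-padded variables), requires tracking a permutation sign that must match $\mathrm{sgn}$ of the overall reordering; the cleanest way is to note that both the full Vandermonde $\prod_{i<j}(w_i-w_j)$ and the product of block Vandermondes times the cross term $\prod(w_{\sigma(i)}-w_{\sigma(j)})$ are antisymmetric in the appropriate variables with the same leading monomial, so they agree up to the sign $\mathrm{sgn}(\sigma)$, which is precisely what the determinant expansion produces. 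Once $k=0$ is handled (the statement reduces to \eqref{ses} with empty $u$-lists, so $s_{\phi^{(i,\sigma)}}(1,\ldots,1)$), the general $k$ case follows by the identical computation with $u$-variables carried along inside the block Schur functions; hence Theorem \ref{p421} is the specialization of Proposition \ref{p437} at $k=0$. I would present the $k=0$ computation in full and then remark that the general case is verbatim the same, the only change being that some of the $1$'s in the block Schur functions are replaced by $u_{\ell}/x_i$.
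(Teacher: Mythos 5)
Your proposal is correct and is essentially the paper's argument: both decompose the Schur function by right cosets of $\Sigma_N^X$ (equivalently, by ordered set partitions of the column indices into blocks $\sigma^{-1}(T_1),\ldots,\sigma^{-1}(T_n)$), factor each coset's contribution into block Schur functions in the normalized variables $w_j/x_i$ times the cross-block Vandermonde factor, and invoke Lemmas \ref{lee}, \ref{pee}, and \ref{l437} to make the sum over cosets well defined; the shift $\eta_j^\sigma$ plays exactly the column-reindexing role you describe. The only packaging difference is that the paper starts from the sum-over-permutations Weyl form $s_\lambda(w)=\sum_{\sigma\in\Sigma_N}w_{\sigma(1)}^{\lambda_1}\cdots w_{\sigma(N)}^{\lambda_N}\prod_{i<j}\frac{w_{\sigma(i)}}{w_{\sigma(i)}-w_{\sigma(j)}}$, in which $\mathrm{sgn}(\sigma)$ is already absorbed into the Vandermonde ratios, so the sign-bookkeeping you flag as the delicate step in your Laplace-expansion framing is sidestepped entirely.
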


\begin{proof}First of all, by Lemmas \ref{pee} and \ref{l437},  the right hand side of (\ref{sws}) is independent of the choice of the representative in $\ol{\si}\in[\Sigma_N/\Sigma_N^X]^r$.

Recall the well-known formula to compute the Schur function
\begin{eqnarray*}
s_{\lambda}(w_1,\ldots,w_N)&=&\sum_{\sigma\in\Si_N}\left(w_{\si(1)}^{\lambda_1}\cdots w_{\si(N)}^{\lambda_N}\prod_{i<j}\frac{w_{\si(i)}}{w_{\si(i)}-w_{\si(j)}}\right).\\
&=&\sum_{\ol{\sigma}\in [\Sigma_N/\Sigma_N^X]^r}\sum_{\sigma\in\ol{\sigma}}\left(w_{\si(1)}^{\lambda_1}\cdots w_{\si(N)}^{\lambda_N}\prod_{i<j}\frac{w_{\si(i)}}{w_{\si(i)}-w_{\si(j)}}\right).
\end{eqnarray*}
If some $w_i=w_j$ for some $i\neq j$, the right hand side is computed by the limit $\lim_{w_i\rightarrow w_j}$.
To prove (\ref{sws}), it suffices to show that for each $\ol{\si}\in[\Si_N/\Si_N^X]^r$, we have
\begin{eqnarray*}
&&\sum_{\sigma\in\ol{\sigma}}\left(w_{\si(1)}^{\lambda_1}\cdots w_{\si(N)}^{\lambda_N}\prod_{i<j}\frac{w_{\si(i)}}{w_{\si(i)}-w_{\si(j)}}\right)\\
&=&\left(\prod_{i=1}^{n}x_i^{|\phi^{(i,\sigma)}(N)|}\right)\left(\prod_{i=1}^{r}s_{\phi^{(i,\sigma)}(N)}\left(\frac{u_i}{x_i},\frac{u_{n+i}}{x_i}\ldots,\frac{u_{qn+i}}{x_i},1,\ldots,1\right)\right)\notag\\
&&\times\left(\prod_{i=r+1}^{n}s_{\phi^{(i,\sigma)}(N)}\left(\frac{u_i}{x_i},\frac{u_{n+i}}{x_i}\ldots,\frac{u_{(q-1)n+i}}{x_i},1,\ldots,1\right)\right)\notag\\
&&\times\left(\prod_{i<j,x_{\sigma(i)}\neq x_{\sigma(j)}}\frac{1}{w_{\sigma(i)}-w_{\sigma(j)}}\right)\notag
\end{eqnarray*}
Note that
\begin{eqnarray*}
\ol{\sigma}=\Sigma_N^X\sigma=\mathcal{S}_{T_n}\mathcal{S}_{T_{n-1}}\cdot\ldots\cdot\mathcal{S}_{T_1}\sigma.
\end{eqnarray*}
Hence we have
\begin{eqnarray*}
&&\sum_{\sigma\in\ol{\sigma}}\left(w_{\si(1)}^{\lambda_1}\cdots w_{\si(N)}^{\lambda_N}\prod_{i<j}\frac{w_{\si(i)}}{w_{\si(i)}-w_{\si(j)}}\right)\\
&=&\sum_{\xi_n\in\mathcal{S}_{T_n}}\cdot\ldots\cdot\sum_{\xi_1\in\mathcal{S}_{T_1}}\left(w_{\xi_n\ldots\xi_1\si(1)}^{\lambda_1}\cdots w_{\xi_n\ldots\xi_1\si(N)}^{\lambda_N}\prod_{i<j}\frac{w_{\xi_n\ldots\xi_1\si(i)}}{w_{\xi_n\ldots\xi_1\si(i)}-w_{\xi_n\ldots\xi_1\si(j)}}\right)\\
&=&\sum_{\xi_n\in\mathcal{S}_{T_n}}\cdot\ldots\cdot\sum_{\xi_1\in\mathcal{S}_{T_1}}\left(w_{\xi_n\ldots\xi_1\si(1)}^{\lambda_1}\cdots w_{\xi_n\ldots\xi_1\si(N)}^{\lambda_N}\prod_{i<j,x_{\sigma(i)}\neq x_{\sigma(j)}}\frac{w_{\xi_n\ldots\xi_1\si(i)}}{w_{\xi_n\ldots\xi_1\si(i)}-w_{\xi_n\ldots\xi_1\si(j)}}\right)\\
&&\times\left(\prod_{h=1}^{n}\prod_{i<j,x_{\sigma(i)}= x_{\sigma(j)}=x_h}\frac{w_{\xi_n\ldots\xi_1\si(i)}}{w_{\xi_n\ldots\xi_1\si(i)}-w_{\xi_n\ldots\xi_1\si(j)}}\right)
\end{eqnarray*}
Note that if $x_{\sigma(g)}=x_h$, for some $h\in[n]$ and $\xi_i\in \mathcal{S}_{T_i}$, for all $i\in[n]$,
\begin{eqnarray*}
\xi_n\ldots\xi_1\sigma(g)=\xi_h(\sigma(g))
\end{eqnarray*}
Then by Lemma \ref{l437}, we obtain
\begin{eqnarray*}
&&\sum_{\sigma\in\ol{\sigma}}\left(w_{\si(1)}^{\lambda_1}\cdots w_{\si(N)}^{\lambda_N}\prod_{i<j}\frac{w_{\si(i)}}{w_{\si(i)}-w_{\si(j)}}\right)
=\prod_{i<j,x_{\sigma(i)}\neq x_{\sigma(j)}}\frac{1}{w_{\si(i)}-w_{\si(j)}}\\
&&\left[\prod_{h=1}^{n}\left(\sum_{\xi_h\in\mathcal{S}_{T_h}}\prod_{g:x_{\si(g)}=x_h}w_{\xi_h\si(g)}^{\lambda_g+\eta_g^{\si}}\prod_{i<j,x_{\sigma(i)}= x_{\sigma(j)}=x_h}\frac{w_{\xi_h\si(i)}}{w_{\xi_h\si(i)}-w_{\xi_h\si(j)}}\right)\right]
\end{eqnarray*}

Note that 
\begin{eqnarray*}
&&\sum_{\xi_h\in\mathcal{S}_{T_h}}\prod_{g:x_{\si(g)}=x_h}w_{\xi_h\si(g)}^{\lambda_g+\eta_g^{\si}}\prod_{i<j,x_{\sigma(i)}= x_{\sigma(j)}=x_h}\frac{w_{\xi_h\si(i)}}{w_{\xi_h\si(i)}-w_{\xi_h\si(j)}}\\
&=&x_h^{|\phi^{(h,\si)}(N)|}s_{\phi^{(h,\sigma)}(N)}\left(\frac{u_h}{x_h},\frac{u_{n+h}}{x_h}\ldots,\frac{u_{tn+h}}{x_h},1,\ldots,1\right).
\end{eqnarray*}
where 
\begin{eqnarray*}
t=\left\{\begin{array}{cc}q&\mathrm{if}\ 1\leq i\leq r\\q-1&\mathrm{if}\ r+1\leq i\leq n\end{array}\right. 
\end{eqnarray*}
Then the proposition follows..
\end{proof}

\noindent{\textbf{Proof of Theorem \ref{p421}.} Theorem \ref{p421} follows from Proposition \ref{p437} by letting $u_j=x_j$ for all $j\in[k]$. $\hfill\Box$

\section{Asymptotics of the Schur polynomial at a general point}\label{s4}

In this section, we use Proposition \ref{p437} to study the asymptotics of Schur functions at a general point. The main goal is to prove Theorem \ref{tm2}.
Note that Proposition \ref{p437} expresses the Schur polynomial at a general point as a sum of Schur polynomials at $(1,\ldots,1)$; when $(x_1,\ldots,x_N)$ are periodic with a fixed finite period $n$ as in Assumption \ref{ap423}, the number of summands to compute the Schur function at a general point in the formula as given in Proposition \ref{p437}, is exponential in $N$. The idea is to find a leading term among all these summands, and then use the leading term to study the asymptotics of the Schur polynomial at a general point.

Recall that evaluating the Schur function $s_{\phi^{(i,\sigma)}(N)}$ at $(1,\ldots,1)$ can be done by using the Weyl character formula
\begin{eqnarray*}
s_{\phi^{(i,\sigma)}(N)}(1,\ldots,1)=\prod_{1\leq j<k\leq |T_i|}\frac{\phi_j^{(i,\sigma)}(N)-\phi_k^{(i,\sigma)}(N)+k-j}{k-j}\
\end{eqnarray*}
where $T_i$ is defined by (\ref{dti}) for $i\in[n]$.

With the help of the Weyl character formula, (\ref{ses}) can also be written as
\begin{eqnarray*}
s_{\lambda}(x_1,\ldots,x_N)&=&\sum_{\ol{\sigma}\in[\Sigma_N/\Sigma_N^X]^r} \left(\prod_{i=1}^{N}x_{\sigma(i)}^{\lambda_i(N)}\right)\left(\prod_{i=1}^{n}\prod_{1\leq j<k\leq |T_i|}\frac{\phi^{(i,\si)}_j-\phi^{(i,\si)}_k+k-j}{k-j}\right)\\
&&\times\left(\prod_{i<j,x_{\sigma(i)}\neq x_{\sigma(j)}}\frac{x_{\sigma(i)}}{x_{\sigma(i)}-x_{\sigma(j)}}\right)
\end{eqnarray*}

Let $\si\in\Si_N$. Under Assumption \ref{ap428} we have
\begin{eqnarray}
&&\left|\frac{x_{\si_0(1)}^{\lambda_{1}}\cdots x_{\si_0(N)}^{\lambda_N}}{x_{\si(1)}^{\lambda_1}\cdots x_{\si(N)}^{\lambda_N}}\right|\notag\\
&=&\left|\frac{\left[\prod_{i=1}^{K_s}x_{\sigma_0(i)}\right]^{\mu_1}\left[\prod_{i=K_s+1}^{K_s+K_{s-1}}x_{\sigma_0(i)}\right]^{\mu_2}\cdot\ldots\cdot\left[\prod_{i=1+\sum_{t=2}^sK_t}^{\sum_{t=1}^{s}K_t}x_{\sigma_0(i)}\right]^{\mu_s}}{\left[\prod_{i=1}^{K_s}x_{\si(i)}\right]^{\mu_1}\left[\prod_{i=K_s+1}^{K_s+K_{s-1}}x_{\si(i)}\right]^{\mu_2}\cdot\ldots\cdot\left[\prod_{i=1+\sum_{t=2}^sK_t}^{\sum_{t=1}^{s}K_t}x_{\si(i)}\right]^{\mu_s}}\right|\label{pq}
\end{eqnarray}

For $i,j\in[n]$, $t\in[s]$, $\si\in \Sigma_N$, define
\begin{eqnarray}
I_{i,j,t}^{\sigma}\label{ijs}
=\{p:p\in[N], x_{\si_0(p)}=x_i, x_{\si(p)}=x_j, \lambda_p=\mu_t\}.
\end{eqnarray}
We may interpret $I_{i,j,t}$ as follows. Consider a $3\times N$ array
\begin{eqnarray*}
B:=\left(\begin{array}{cccc}\lambda_1&\lambda_2&\ldots&\lambda_N\\x_{\si_0(1)}&x_{\si_0(2)}&\ldots&x_{\si_0(N)}\\x_{\si(1)}&x_{\si(2)}&\ldots&x_{\si(N)}\end{array}\right).
\end{eqnarray*}
Then $I_{i,j,t}^{\si}$ consists of all the column indices $p$, such that
\begin{eqnarray}
&&B(1,p)=\mu_t,\ \mathrm{and}\notag\\ &&B(2,p)=x_i,\ \mathrm{and}\label{b2p}\\ &&B(3,p)=x_j\label{b3p}
\end{eqnarray}

We use $\left|I_{i,j,t}^{\sigma}\right|$ to denote the cardinality of the set $I_{i,j,t}^{\sigma}$.

Recall that for $i\in[n]$, $J_i$ is defined in (\ref{ji}).
Let 
\begin{eqnarray}
I_{i,j}^{\sigma}=\cup_{t\in[s]}I_{i,j,t}^{\sigma}=\cup_{t\in J_i}I_{i,j,t}^{\si}\label{ism}.
\end{eqnarray}
That is, $I_{i,j}^{\sigma}$ consists of all the column indices $p\in[N]$ such that 
(\ref{b2p}) and (\ref{b3p}) hold. The last identity follows from the fact that $I_{i,j,t}^{\sigma}=\emptyset$ unless $t\in J_i$.
Since the right hand side of (\ref{ism}) is a disjoint union, we have
\begin{eqnarray}
\left|I_{i,j}^{\sigma}\right|=\sum_{t\in J_i}\left|I_{i,j,t}^{\si}\right|=\sum_{t\in [s]}\left|I_{i,j,t}^{\si}\right|\label{iat}
\end{eqnarray}

Recall also that $T_i$ is defined by (\ref{dti}), we have
\begin{eqnarray}
\sigma_0^{-1}(T_i)=\cup_{j\in[n]}I_{i,j}^{\sigma}=\cup_{j\in[n]}\cup_{t\in J_i}I_{i,j,t}^{\sigma}\label{tri}\\
\si^{-1}(T_j)=\cup_{i\in[n]}I_{i,j}^{\si}=\cup_{i\in[n]}\cup_{t\in J_i}I_{i,j,t}^{\si}\label{trig}
\end{eqnarray}
where for $j\in[n]$ and $\si\in\Si_N$,
\begin{eqnarray*}
\sigma^{-1}(T_j)=\{l\in[N]:x_{\si(l)}=x_j\}.
\end{eqnarray*}

\begin{lemma}\label{l425}Suppose Assumption \ref{ap423} holds.
 Let $\mathcal{P}$ be a nonempty, proper subset of $[n]$, i.e. 
\begin{eqnarray*}
\mathcal{P}\subset[n];\qquad\mathcal{P}\neq \emptyset;\qquad \mathcal{P}\neq[n].
\end{eqnarray*}
Let $\mathcal{P}^c=[n]\setminus \mathcal{P}$ be the complement of $\mathcal{P}$ in $[n]$. Then
\begin{eqnarray}
\sum_{i\in \mathcal{P}}\sum_{j\in \mathcal{P}^c}\left|I_{i,j}^{\si}\right|=\sum_{i\in \mathcal{P}}\sum_{j\in\mathcal{P}^c}\left|I_{j,i}^{\si}\right|.\label{ii}
\end{eqnarray}

\end{lemma}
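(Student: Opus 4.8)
The plan is to interpret both sides of \eqref{ii} as counting the same set of columns of the array $B$, namely those columns $p\in[N]$ where the second-row symbol $x_{\si_0(p)}$ and the third-row symbol $x_{\si(p)}$ lie ``on opposite sides'' of the partition $\mathcal{P}\sqcup\mathcal{P}^c$ of $[n]$. Concretely, the left-hand side counts columns $p$ with $x_{\si_0(p)}=x_i$ for some $i\in\mathcal{P}$ and $x_{\si(p)}=x_j$ for some $j\in\mathcal{P}^c$, while the right-hand side counts columns with $x_{\si_0(p)}$ indexed in $\mathcal{P}^c$ and $x_{\si(p)}$ indexed in $\mathcal{P}$. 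These two sets are generally different, so the identity is not a trivial bijection between them; instead it must come from a conservation (flow-balance) argument.

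The key step is the following counting identity. For a subset $\mathcal{Q}\subset[n]$ write $T_{\mathcal{Q}}=\bigcup_{i\in\mathcal{Q}}T_i$ for the corresponding set of values. Since $\si_0$ and $\si$ are both permutations of $[N]$, the sets $\si_0^{-1}(T_{\mathcal{P}})$ and $\si^{-1}(T_{\mathcal{P}})$ have the same cardinality, namely $|T_{\mathcal{P}}|=\frac{N}{n}|\mathcal{P}|$. Now decompose each of these index sets according to the other permutation's value, using \eqref{tri} and \eqref{trig}: one gets
\begin{eqnarray*}
\left|\si_0^{-1}(T_{\mathcal{P}})\right|=\sum_{i\in\mathcal{P}}\sum_{j\in[n]}\left|I_{i,j}^{\si}\right|=\sum_{i\in\mathcal{P}}\sum_{j\in\mathcal{P}}\left|I_{i,j}^{\si}\right|+\sum_{i\in\mathcal{P}}\sum_{j\in\mathcal{P}^c}\left|I_{i,j}^{\si}\right|,
\end{eqnarray*}
and similarly
\begin{eqnarray*}
\left|\si^{-1}(T_{\mathcal{P}})\right|=\sum_{j\in\mathcal{P}}\sum_{i\in[n]}\left|I_{i,j}^{\si}\right|=\sum_{i\in\mathcal{P}}\sum_{j\in\mathcal{P}}\left|I_{i,j}^{\si}\right|+\sum_{i\in\mathcal{P}^c}\sum_{j\in\mathcal{P}}\left|I_{i,j}^{\si}\right|.
\end{eqnarray*}
Subtracting these two equal quantities, the common ``diagonal'' double sum over $i,j\in\mathcal{P}$ cancels, leaving exactly $\sum_{i\in\mathcal{P}}\sum_{j\in\mathcal{P}^c}\left|I_{i,j}^{\si}\right|=\sum_{i\in\mathcal{P}^c}\sum_{j\in\mathcal{P}}\left|I_{i,j}^{\si}\right|$, which after relabeling $i\leftrightarrow j$ in the right-hand sum is precisely \eqref{ii}.

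The only points that need care are: (i) checking that $\left|\si_0^{-1}(T_i)\right|=\left|T_i\right|=\frac{N}{n}$ for every $i$, which is immediate because $\si_0$ is a bijection on $[N]$ and $T_i=\{j\in[N]:x_j=x_i\}$ has size $\frac{N}{n}$ by Assumption \ref{ap423}; (ii) verifying that the unions in \eqref{tri} and \eqref{trig} over $j\in[n]$ (resp. $i\in[n]$) are disjoint, so that cardinalities add — this holds because for fixed $p$ the value $x_{\si(p)}$ equals exactly one $x_j$; and (iii) summing \eqref{iat} over the appropriate index ranges to pass from the $I_{i,j,t}^{\si}$'s to the $I_{i,j}^{\si}$'s, which is already recorded in the excerpt. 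I do not anticipate a serious obstacle here: the statement is a pure bookkeeping/flow-conservation fact, and once the two index sets $\si_0^{-1}(T_{\mathcal{P}})$ and $\si^{-1}(T_{\mathcal{P}})$ are seen to have equal size, the cancellation of the diagonal block is automatic. The mild subtlety is purely notational — keeping straight which permutation indexes row 2 versus row 3 of $B$ and which summation index runs over $\mathcal{P}$ versus $\mathcal{P}^c$.
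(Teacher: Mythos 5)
Your proof is correct and follows essentially the same route as the paper's. The paper computes $\sum_{i\in\mathcal{P}}\sum_{j\in\mathcal{P}^c}|I_{i,j}^{\si}| = |\{p : x_{\si_0(p)}\in X_{\mathcal{P}}\}| - |\{p : x_{\si_0(p)}\in X_{\mathcal{P}},\, x_{\si(p)}\in X_{\mathcal{P}}\}|$ and the analogous identity for the right-hand side, then uses that $\si_0,\si$ are bijections so both first terms equal $|\{q : x_q\in X_{\mathcal{P}}\}|$ — which is exactly your observation that $|\si_0^{-1}(T_{\mathcal{P}})|=|\si^{-1}(T_{\mathcal{P}})|=|T_{\mathcal{P}}|$, followed by cancellation of the common diagonal block.
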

\begin{proof}

Let
\begin{eqnarray*}
X_{\mathcal{P}}=\{x_i:i\in \mathcal{P}\};\qquad X_{\mathcal{P}^c}=\{x_i:i\in \mathcal{P}^c\}
\end{eqnarray*}
By (\ref{ijs}), the left hand side of (\ref{ii}) is equal to 
\begin{eqnarray}
|\{p:p\in[N], x_{\si_0(p)}\in X_{\mathcal{P}}\}|-|\{p:p\in[N], x_{\si_0(p)}\in X_{\mathcal{P}}, x_{\si(p)}\in X_{\mathcal{P}}\}|\label{lt}
\end{eqnarray}
while the right hand side of (\ref{ii}) is equal to \begin{eqnarray}
 \label{rt}
|\{p:p\in[N], x_{\si(p)}\in X_{\mathcal{P}}\}|-|\{p:p\in[N], x_{\si_0(p)}\in X_{\mathcal{P}}, x_{\si(p)}\in X_{\mathcal{P}}\}|\notag
\end{eqnarray}
Since $\si$ is a bijection from $[N]$ to $[N]$, and under Assumption \ref{ap423}, there are exactly $\frac{N}{n}$ $j$'s in $[N]$ such that $x_j=x_i$ for each fixed $i\in[n]$,  we obtain
\begin{eqnarray*}
|\{p:p\in[N], x_{\si(p)}\in X_{\mathcal{P}}\}|=\frac{N|\mathcal{P}|}{n};
\end{eqnarray*}
for any $\si\in \Si_N$.

Therefore both (\ref{lt}) and (\ref{rt}) are equal to 
\begin{eqnarray*}
\frac{N|\mathcal{P}|}{n}-|\{p:p\in[N], x_{\si_0(p)}\in X_{\mathcal{P}}, x_{\si(p)}\in X_{\mathcal{P}}\}|; 
\end{eqnarray*}
then the lemma follows.
\end{proof}

\begin{lemma}Suppose (\ref{mi}) and Assumption \ref{ap423} hold, and $\sigma\in \Sigma_N\setminus \ol{\si}_0$. For $p\in[n]$, let
\begin{eqnarray*}
m_p=\max\{\ell:\ell\in J_p\}
\end{eqnarray*}
Let
\begin{eqnarray}
L:&=&\sum_{1\leq i<j\leq n}\sum_{t=1}^{s}\mu_{t}\left[\left|I_{i,j,t}^{\si}\right|-\left|I_{j,i,t}^{\si}\right|\right](j-i)\label{dli}
\end{eqnarray}
Then we have
\begin{eqnarray*}
L= \sum_{p=1}^{n}\left[\sum_{i=1}^{p}\sum_{j=p+1}^{n}\sum_{t\in J_i}(\mu_{t}-\mu_{m_p})\left|I^{\si}_{i,j,t}\right|+\sum_{i=p+1}^{n}\sum_{j=1}^{p}\sum_{t\in J_i}(\mu_{m_p}-\mu_{t})\left|I_{i,j,t}^{\si}\right|\right] \notag
\end{eqnarray*}
In particular $L\geq 0$. Moreover, if Assumption \ref{ap428} holds, then
\begin{eqnarray}
L&\geq&\frac{1}{2}\min_{p<q}(\mu_p-\mu_q)\sum_{i=1}^{n}\sum_{j\in[n]\setminus\{i\}}|I_{i,j}^{\si}|
\geq  \frac{C_1}{2} N \sum_{i=1}^{n}\sum_{j\in[n]\setminus\{i\}}|I_{i,j}^{\si}|.\label{lq}
\end{eqnarray}
\end{lemma}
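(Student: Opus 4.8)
The plan is to establish the closed-form expression for $L$ first, then derive the two lower bounds as consequences. I would start from the defining sum
\begin{eqnarray*}
L=\sum_{1\leq i<j\leq n}\sum_{t=1}^{s}\mu_t\left[\left|I_{i,j,t}^{\si}\right|-\left|I_{j,i,t}^{\si}\right|\right](j-i),
\end{eqnarray*}
and perform an Abel-type summation over the gap $j-i$: write $j-i=\sum_{p=i}^{j-1}1$, so that the factor $(j-i)$ becomes a sum over ``thresholds'' $p$ with $i\leq p<j$. This regroups $L$ as $\sum_{p=1}^{n-1}$ (or $\sum_{p=1}^n$) of the contribution of all pairs $(i,j)$ straddling $p$, i.e.\ $i\leq p<j$. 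For each fixed $p$, the straddling-pair contribution is $\sum_{i=1}^{p}\sum_{j=p+1}^{n}\sum_{t}\mu_t\big(|I_{i,j,t}^\si|-|I_{j,i,t}^\si|\big)$. The key algebraic move is then to subtract and add a common reference value $\mu_{m_p}$: using Lemma \ref{l425} with $\mathcal{P}=\{1,\dots,p\}$, which gives $\sum_{i\le p}\sum_{j>p}|I_{i,j}^\si|=\sum_{i\le p}\sum_{j>p}|I_{j,i}^\si|$ (and hence, summing the constant $\mu_{m_p}$ against both sides, the $\mu_{m_p}$-terms cancel), one rewrites the $p$-th contribution as $\sum_{i\le p}\sum_{j>p}\sum_t(\mu_t-\mu_{m_p})|I_{i,j,t}^\si|+\sum_{i>p}\sum_{j\le p}\sum_t(\mu_{m_p}-\mu_t)|I_{i,j,t}^\si|$. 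This is exactly the claimed formula; one must also track the index conventions ($t$ ranges over $[s]$ but $I_{i,j,t}^\si=\emptyset$ unless $t\in J_i$, by \eqref{iat}, so the sums over $t\in J_i$ and $t\in[s]$ agree).

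Next I would verify $L\ge 0$ term by term. By the definition $m_p=\max\{\ell:\ell\in J_p\}$ and Assumption \ref{ap428}(4)(a) (the $J_i$ are pairwise disjoint and ordered: $i<j$, $\ell\in J_i$, $t\in J_j$ imply $\ell<t$), for $i\le p$ and $t\in J_i$ we have $t\le m_i\le m_p$, hence $\mu_t\ge\mu_{m_p}$ by \eqref{mi}; so the first double sum is a sum of nonnegative terms. Symmetrically, for $i>p$ and $t\in J_i$ we have $t>m_p$ (since $t\in J_i$ with $i\ge p+1$ forces $t$ larger than everything in $J_p$), hence $\mu_t<\mu_{m_p}$ and $\mu_{m_p}-\mu_t>0$; the second double sum is also a sum of nonnegative terms. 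Therefore $L\ge 0$.

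Finally, for the quantitative bound \eqref{lq} I would extract, for each unordered pair $\{i,j\}$ with $i\ne j$, the contribution of the index $p$ that ``separates'' $i$ and $j$ optimally, and bound $|\mu_t-\mu_{m_p}|$ (respectively $|\mu_{m_p}-\mu_t|$) from below by $\min_{p<q}(\mu_p-\mu_q)$ whenever the corresponding $I$-set is nonempty, because the relevant $\mu$-indices are distinct. Summing over $p$, each term $|I_{i,j,t}^\si|$ with $i\ne j$ gets counted, and since $\sum_t|I_{i,j,t}^\si|=|I_{i,j}^\si|$ by \eqref{iat}, one obtains $L\ge c\min_{p<q}(\mu_p-\mu_q)\sum_{i=1}^n\sum_{j\ne i}|I_{i,j}^\si|$; the factor $\tfrac12$ accounts for the fact that the naive double-counting over ordered pairs $(i,j)$ versus the separating index $p$ may over- or under-count, so one uses $\tfrac12$ to be safe. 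The second inequality in \eqref{lq} is then immediate from Assumption \ref{ap428}(4)(b), which gives $\mu_p-\mu_q\ge C_1N$ for $p<q$. The main obstacle I expect is the bookkeeping in the Abel summation and the careful use of Lemma \ref{l425} to see precisely why the $\mu_{m_p}$ reference terms cancel — getting the ranges of $i,j,p$ and the orientation of the two double sums exactly right is where the argument is delicate; the positivity and the final estimate are then routine given the ordering properties of $\{J_i\}$ and Assumption \ref{ap428}(4)(b).
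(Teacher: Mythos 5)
The first two-thirds of your plan are sound and match the paper's argument: the Abel-type rewriting of the factor $(j-i)$ as a sum over thresholds $p$, the insertion of the reference value $\mu_{m_p}$, the cancellation of the $\mu_{m_p}$-terms via Lemma \ref{l425} with $\mathcal{P}=[p]$, and the resulting closed form are all exactly what the paper does (the paper first proves $L\geq 0$ by a direct bound $L\geq \sum_p \mu_{m_p}[\,\cdots\,]=0$ and only afterwards writes $L=L-0$ to get the closed form, but the content is the same).

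Where the argument breaks down is the quantitative bound \eqref{lq}. Your ``extract, for each pair $\{i,j\}$, the separating index $p$'' strategy, together with the claim that $|\mu_t-\mu_{m_p}|\geq \min_{p'<q'}(\mu_{p'}-\mu_{q'})$ ``because the relevant $\mu$-indices are distinct,'' is not correct in the boundary case. In the first double sum, the term with threshold $p=i$ has $t\in J_i=J_p$ and $m_p=\max J_p$, so nothing in Assumption \ref{ap428}(4)(a) (which only separates $J_i$ from $J_j$ when $i\neq j$) forces $t\neq m_p$: the coefficient $\mu_t-\mu_{m_p}$ can be $0$. For an adjacent pair $(i,i+1)$ with $i<i+1$, the threshold $p=i$ is the \emph{only} one that sees this pair in the first double sum, so your method gives no positive lower bound for $|I^{\sigma}_{i,i+1,t}|$. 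Consequently ``each term $|I_{i,j,t}^\sigma|$ with $i\neq j$ gets counted'' is not true with a uniformly positive coefficient, and the factor $\tfrac12$ cannot be justified as a hedge against over- or under-counting; as written, the bound $L\geq \tfrac12\min_{p<q}(\mu_p-\mu_q)\sum_{i\neq j}|I_{i,j}^\sigma|$ does not follow.

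The correct route (which the paper uses) is to discard the first double sum entirely and keep only the second one, $\sum_{p}\sum_{i>p}\sum_{j\le p}\sum_{t\in J_i}(\mu_{m_p}-\mu_t)|I^{\sigma}_{i,j,t}|$, where $i>p$ always gives the strict gap $m_p<t$ and hence $\mu_{m_p}-\mu_t\geq \min_{p'<q'}(\mu_{p'}-\mu_{q'})$. This yields $L\geq \min_{p'<q'}(\mu_{p'}-\mu_{q'})\cdot \sum_{p}\sum_{i>p}\sum_{j\le p}|I_{i,j}^\sigma|$. Then one applies Lemma \ref{l425} once more, at each fixed $p$, to get the \emph{equality} $\sum_{i>p}\sum_{j\le p}|I_{i,j}^\sigma|=\sum_{i\le p}\sum_{j>p}|I_{i,j}^\sigma|$; averaging these two equal expressions and collapsing the sum over $p$ yields $\tfrac12[\sum_{i>j}(i-j)|I_{i,j}^\sigma|+\sum_{i<j}(j-i)|I_{i,j}^\sigma|]\geq \tfrac12\sum_{i\neq j}|I_{i,j}^\sigma|$. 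That identity is the true source of the factor $\tfrac12$; it is not a safety margin but an exact symmetrization. You should replace the hand-wavy paragraph with this two-step argument (drop the first sum, symmetrize the second via Lemma \ref{l425}).
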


\begin{proof}We rewrite the right hand side of (\ref{dli}) as follows:
\begin{eqnarray}
L=\sum_{t=1}^{s}\mu_{t}\left[\sum_{1\leq i<j\leq n}\left|I_{i,j,t}^{\si}\right|(j-i)-\sum_{1\leq j<i\leq n}\left|I_{i,j,t}^{\si}\right|(i-j)\right],\label{ity0}
\end{eqnarray}
which is also equal to
\begin{eqnarray}
L&=&\sum_{p=1}^{n}\left[\sum_{i=1}^{p}\sum_{j=p+1}^{n}\sum_{t\in J_i}\mu_{t}\left|I^{\si}_{i,j,t}\right|-\sum_{i=p+1}^{n}\sum_{j=1}^{p}\sum_{t\in J_i}\mu_{t}\left|I_{i,j,t}^{\si}\right|\right]\label{ity1}
\end{eqnarray}
To see why (\ref{ity1}) follows from (\ref{ity0}), note that for each given $(a,b)\in[n]^2$, $a<b$, in $\sum_{p=1}^{n}\sum_{i=1}^{p}\sum_{j=p+1}^{n}\sum_{t\in J_i}\mu_{t}\left|I^{\si}_{i,j,t}\right|$, the monomial $\mu_{t}\left|I^{\si}_{a,b,t}\right|$ is added when $p=a,a+1,\ldots,b-1$, which is exactly $(b-a)$ times.

By Lemma \ref{l424}, we have
\begin{eqnarray*}
L\geq \sum_{p=1}^{n}\mu_{m_p}\left[\sum_{i=1}^{p}\sum_{j=p+1}^{n}\sum_{t\in J_i}\left|I^{\si}_{i,j,t}\right|-\sum_{i=p+1}^{n}\sum_{j=1}^{p}\sum_{t\in J_i}\left|I_{i,j,t}^{\si}\right|\right]
\end{eqnarray*}

Let
\begin{eqnarray*}
\mathcal{P}=[p]\subseteq[n].
\end{eqnarray*}
By Lemma \ref{l425} and (\ref{iat}), we obtain
\begin{eqnarray*}
\sum_{i=1}^{p}\sum_{j=p+1}^{n}\sum_{t\in J_i}\left|I^{\si}_{i,j,t}\right|=\sum_{i=p+1}^{n}\sum_{j=1}^{p}\sum_{t\in J_i}\left|I_{i,j,t}^{\si}\right|
\end{eqnarray*}
Therefore $L\geq 0$.

Moreover, we have 
\begin{eqnarray}
L&=& L-0\label{lg}\\
&= & \sum_{p=1}^{n}\left[\sum_{i=1}^{p}\sum_{j=p+1}^{n}\sum_{t=1}^{s}\mu_{t}\left|I^{\si}_{i,j,t}\right|-\sum_{i=p+1}^{n}\sum_{j=1}^{p}\sum_{t=1}^{s}\mu_{t}\left|I_{i,j,t}^{\si}\right|\right]\notag  \\
&&-\sum_{p=1}^{n}\mu_{m_p}\left[\sum_{i=1}^{p}\sum_{j=p+1}^{n}\sum_{t=1}^{s}\left|I^{\si}_{i,j,t}\right|-\sum_{i=p+1}^{n}\sum_{j=1}^{p}\sum_{t=1}^{s}\left|I_{i,j,t}^{\si}\right|\right]\notag\\
&=& \sum_{p=1}^{n}\left[\sum_{i=1}^{p}\sum_{j=p+1}^{n}\sum_{t\in J_i}(\mu_t-\mu_{m_p})\left|I^{\si}_{i,j,t}\right|+\sum_{i=p+1}^{n}\sum_{j=1}^{p}\sum_{t\in J_i}(\mu_{m_p}-\mu_t)\left|I_{i,j,t}^{\si}\right|\right]\notag
\end{eqnarray}

Under the assumption that $\si\notin\ol{\si}_0$, there exist $i,j\in [n]$, $i\neq j$ and $t\in[s]$, such that $\left|I_{i,j,t}^{\si}\right|>0$. The following cases might occur
\begin{itemize}
\item There exist $i<j$ such that $\sum_{t=1}^{s}\left|I_{i,j,t}^{\si}\right|>0$. By (\ref{ii}) we have for any positive integer $p$ satisfying $i\leq p<j$,
\begin{eqnarray*}
\sum_{a=1}^{p}\sum_{b=p+1}^{n}\sum_{t=1}^{s}\left|I_{a,b,t}^{\sigma}\right|=\sum_{a'=p+1}^{n}\sum_{b'=1}^{p}\sum_{t=1}^{s}\left| I_{a',b',t}^{\si}\right|
\end{eqnarray*}
Then there exist $i'>j'$, such that $\sum_{t=1}^{s}\left|I_{i',j',t}^{\si}\right|>0$.
\item There exist $i>j$ such that $\sum_{t=1}^{s}\left|I_{i,j,t}^{\si}\right|>0$, then by Assumption \ref{ap428} for any $i<j$, $\ell\in J_i$, $t\in J_j$, we have $\ell<t$, then by (\ref{lg}), we obtain
\begin{eqnarray*}
L&\geq& \min_{p<q}(\mu_{p}-\mu_{q})\sum_{r=1}^{n}\sum_{i=r+1}^{n}\sum_{j=1}^{r}\sum_{t\in J_i}\left|I_{i,j,t}^{\si}\right|
=\min_{p<q}(\mu_{p}-\mu_{q})\sum_{r=1}^{n}\sum_{i=r+1}^{n}\sum_{j=1}^{r}\left|I_{i,j}^{\si}\right|
\end{eqnarray*}
By Lemma \ref{l425}, we have
\begin{eqnarray*}
L&\geq&\min_{p<q}(\mu_{p}-\mu_{q})\sum_{r=1}^{n}\sum_{i=1}^{r}\sum_{j=r+1}^{n}\left|I_{j,i}^{\si}\right|\\
&=&\frac{\min_{p<q}(\mu_{p}-\mu_{q})}{2}\left[\sum_{1\leq j<i\leq n}\left|I_{i,j}^{\si}\right|(i-j)+\sum_{1\leq i<j\leq n}\left|I_{i,j}^{\si}\right|(j-i)\right]\\
&\geq &\frac{\min_{p<q}(\mu_{p}-\mu_{q})}{2}\sum_{i=1}^{n}\sum_{j\in[n]\setminus\{i\}}|I_{i,j}^{\si}|\\
&\geq &\frac{C_1 N}{2}\sum_{i=1}^{n}\sum_{j\in[n]\setminus\{i\}}|I_{i,j}^{\si}|
\end{eqnarray*}
where the last inequality follows from Assumption \ref{ap428}. Then the Lemma follows.
\end{itemize}
\end{proof}

Then
\begin{eqnarray}
&&\label{xg}\left|\frac{x_{\si_0(1)}^{\lambda_{1}}\cdots x_{\si_0(N)}^{\lambda_N}}{x_{\si(1)}^{\lambda_1}\cdots x_{\si(N)}^{\lambda_N}}\right|=\prod_{1\leq i<j\leq n}\prod_{t=1}^{s}\left(\frac{x_i}{x_j}\right)^{\mu_{t}\left[\left|I_{i,j,t}^{\si}\right|-\left|I_{j,i,t}^{\si}\right|\right]}\\
&=&\prod_{1\leq i<j\leq n}\left(\frac{x_{i}}{x_{i+1}}\cdot\frac{x_{i+1}}{x_{i+2}}\cdot\ldots\cdot\frac{x_{j-1}}{x_j}\right)^{\sum_{t=1}^{s}\mu_{t}\left[\left|I_{i,j,t}^{\si}\right|-\left|I_{j,i,t}^{\si}\right|\right]}\notag\\
&\geq& \left(\min_{1\leq i<j\leq n}\frac{x_i}{x_j}\right)^{\sum_{1\leq i<j\leq n}\sum_{t=1}^{s}\mu_{t}\left[\left|I_{(i,j,t)}^{\si}\right|-\left|I_{(j,i,t)}^{\si}\right|\right](j-i)}\notag
\end{eqnarray}

If Assumption \ref{ap428} holds, then by (\ref{lq}), we have
\begin{eqnarray*}
&&\left|\frac{x_{\si_0(1)}^{\lambda_{1}}\cdots x_{\si_0(N)}^{\lambda_N}}{x_{\si(1)}^{\lambda_1}\cdots x_{\si(N)}^{\lambda_N}}\right|\geq \left(\min_{1\leq i<j\leq n}\frac{x_i}{x_j}\right)^{\min_{p<q}(\mu_p-\mu_q)}
\end{eqnarray*}

\begin{lemma}\label{ll43}Let $\si_0$ be defined as in (\ref{sz}) and $\si\in \Si_N$. Then
\begin{eqnarray}
&&\label{xq}\left|\frac{\prod_{i<j,x_{\sigma_0(i)}\neq x_{\sigma_0(j)}}\frac{x_{\sigma_0(i)}}{x_{\sigma_0(i)}-x_{\sigma_0(j)}}}{\prod_{i<j,x_{\sigma(i)}\neq x_{\sigma(j)}}\frac{x_{\sigma(i)}}{x_{\sigma(i)}-x_{\sigma(j)}}}\right|=
\left|\prod_{x_i\neq x_j,\si_0^{-1}(i)>\si_0^{-1}(j),\si^{-1}(i)<\si^{-1}(j)}\frac{x_j}{x_i}\right|\geq 1\notag
\end{eqnarray}
\end{lemma}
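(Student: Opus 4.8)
The plan is to reduce the claimed identity to a product over unordered pairs of positions carrying distinct $x$-values, in which almost every factor cancels between the numerator and the denominator, and then to use the defining property of $\si_0$.

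First I would re-index each of the two products. For $\tau\in\Si_N$, substituting $a=\tau(i)$, $b=\tau(j)$ turns a product over $1\le i<j\le N$ into a product over ordered pairs of distinct positions $(a,b)$ with $\tau^{-1}(a)<\tau^{-1}(b)$; grouping the two orderings of each unordered pair $\{a,b\}$ with $x_a\neq x_b$ gives
\[
\prod_{\substack{i<j\\ x_{\tau(i)}\neq x_{\tau(j)}}}\frac{x_{\tau(i)}}{x_{\tau(i)}-x_{\tau(j)}}
=\prod_{\{a,b\}:\,x_a\neq x_b}F_\tau(a,b),
\]
where $F_\tau(a,b)=\frac{x_a}{x_a-x_b}$ if $\tau^{-1}(a)<\tau^{-1}(b)$ and $F_\tau(a,b)=\frac{x_b}{x_b-x_a}$ otherwise.

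Next I would apply this with $\tau=\si_0$ and $\tau=\si$ and form the quotient, which factors over the unordered pairs $\{a,b\}$ with $x_a\neq x_b$. On a pair where $\si_0^{-1}$ and $\si^{-1}$ induce the same order, $F_{\si_0}(a,b)/F_\si(a,b)=1$. On a \emph{discordant} pair, labelled so that $\si_0^{-1}(a)<\si_0^{-1}(b)$ while $\si^{-1}(a)>\si^{-1}(b)$, a one-line computation gives
\[
\frac{F_{\si_0}(a,b)}{F_\si(a,b)}=\frac{x_a/(x_a-x_b)}{x_b/(x_b-x_a)}=-\frac{x_a}{x_b}.
\]
Taking absolute values, the cancellation of the concordant pairs yields the first asserted equality, once each discordant pair is rewritten as $(i,j)$ with $\si_0^{-1}(i)>\si_0^{-1}(j)$ and $\si^{-1}(i)<\si^{-1}(j)$ (so that the $\si_0$-earlier position $j$ plays the role of $a$ above, and the surviving factor is $|x_j/x_i|$); this matches the middle expression in the statement exactly.

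Finally, for the inequality I would invoke the definition (\ref{sz}) of $\si_0$: it lists the $x$-values in weakly decreasing order, so on a discordant pair labelled as above $\si_0^{-1}(a)<\si_0^{-1}(b)$ together with $x_a\neq x_b$ forces $x_a>x_b$, and since all the $x_i$ are positive this gives $|x_a/x_b|=x_a/x_b>1$. Hence the product over discordant pairs is a (possibly empty) product of factors each exceeding $1$, so it is $\ge 1$. The only delicate point is the bookkeeping in the first two steps — checking that the substitution $a=\tau(i)$ reverses order correctly, that $F_\tau$ is read off for both orderings so that concordant pairs cancel exactly, and that the surviving discordant product is indexed precisely as in the statement; everything after that is routine algebra and a sign check.
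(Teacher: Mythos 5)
Your proof is correct and follows essentially the same route as the paper's: re-index the product by position (i.e.\ substitute $a=\tau(i)$), observe that the factors cancel on pairs where $\si_0^{-1}$ and $\si^{-1}$ induce the same order, compute $\bigl|F_{\si_0}/F_\si\bigr|=|x_a/x_b|$ on each discordant pair, and conclude from the decreasing arrangement in \eqref{sz} that each surviving factor is $\ge 1$. The only difference is cosmetic: you package the argument through the piecewise-defined $F_\tau$ on unordered pairs, whereas the paper writes $\frac{x_a}{x_a-x_b}=(1-x_b/x_a)^{-1}$ and keeps the ordered-pair bookkeeping explicit; the cancellation and the final comparison are identical.
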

\begin{proof}Note that
\begin{eqnarray*}
&&\left|\frac{\prod_{i<j,x_{\sigma_0(i)}\neq x_{\sigma_0(j)}}\frac{x_{\sigma_0(i)}}{x_{\sigma_0(i)}-x_{\sigma_0(j)}}}{\prod_{i<j,x_{\sigma(i)}\neq x_{\sigma(j)}}\frac{x_{\sigma(i)}}{x_{\sigma(i)}-x_{\sigma(j)}}}\right|
=\left|\frac{\prod_{x_i\neq x_j,\si_0^{-1}(i)<\si_0^{-1}(j)}\left(1-\frac{x_j}{x_i}\right)^{-1}}{\prod_{x_i\neq x_j,\si^{-1}(i)<\si^{-1}(j)}\left(1-\frac{x_j}{x_i}\right)^{-1}}\right|\\
&=&\left|\frac{\prod_{x_i\neq x_j,\si_0^{-1}(i)>\si_0^{-1}(j),\si^{-1}(i)<\si^{-1}(j)}\left(1-\frac{x_j}{x_i}\right)}{\prod_{x_i\neq x_j,\si_0^{-1}(i)<\si_0^{-1}(j),\si^{-1}(i)>\si^{-1}(j)}\left(1-\frac{x_j}{x_i}\right)}\right|=\left|\prod_{x_i\neq x_j,\si_0^{-1}(i)>\si_0^{-1}(j),\si^{-1}(i)<\si^{-1}(j)}\frac{x_j}{x_i}\right|\\
&\geq& 1
\end{eqnarray*}
where the last inequality holds because $\si_0^{-1}(i)>\si_0^{-1}(j)$, by (\ref{sz}), we obtain $x_j\geq x_i$
\end{proof}

It is straightforward to see that 
\begin{eqnarray}
&&\left|\frac{\prod_{i=1}^{n}\prod_{1\leq j<k\leq |T_i|}\frac{\phi^{(i,\si_0)}_j-\phi^{(i,\si_0)}_k+k-j}{k-j}}{\prod_{i=1}^{n}\prod_{1\leq j<k\leq |T_i|}\frac{\phi^{(i,\si)}_j-\phi^{(i,\si)}_k+k-j}{k-j}}\right|\geq \prod_{i=1}^{n}\prod_{1\leq j<k\leq |T_i|}\frac{k-j}{\phi^{(i,\si)}_j-\phi^{(i,\si)}_k+k-j}\label{pbq}
\end{eqnarray}

We have the following lemma
 \begin{lemma}\label{ll46}Under Assumptions \ref{ap423} and \ref{ap428}, we have
 \begin{eqnarray}
 &&\prod_{i=1}^{n}\prod_{1\leq j<k\leq |T_i|}\frac{k-j}{\phi^{(i,\si)}_j-\phi^{(i,\si)}_k+k-j}\geq e^{-C_3N^2}\label{46iq}
 \end{eqnarray}
 where $C_3>0$ is a constant independent of $N$.
 \end{lemma}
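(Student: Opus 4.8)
The plan is to bound each factor $\frac{k-j}{\phi^{(i,\si)}_j-\phi^{(i,\si)}_k+k-j}$ from below, using that the numerator of $\phi^{(i,\si)}$ differences cannot be too large. First I would note that $\phi^{(i,\si)}_j - \phi^{(i,\si)}_k \geq 0$ for $j < k$ (since $\phi^{(i,\si)}$ is a partition, written in decreasing order), so each factor lies in $(0,1]$, and hence the whole product is at most $1$; the content is the lower bound. The key observation is that every entry of $\phi^{(i,\si)}(N)$ has the form $\lambda_p(N) + \eta_p^{\si}(N)$ with $\lambda_p(N) \leq \mu_1 = O(N)$ by Assumption~\ref{ap428}(4)(b) and $\eta_p^{\si}(N) \leq N$ by its definition~(\ref{et}). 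Therefore $0 \leq \phi^{(i,\si)}_j - \phi^{(i,\si)}_k \leq \phi^{(i,\si)}_1 \leq \mu_1 + N =: D N$ for a constant $D$ independent of $N$, and likewise $k - j \leq |T_i| = \frac{N}{n} \leq N$.

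Second I would estimate a single factor: for $1 \leq j < k \leq |T_i|$,
\begin{eqnarray*}
\frac{k-j}{\phi^{(i,\si)}_j-\phi^{(i,\si)}_k+k-j} \geq \frac{k-j}{DN + N} = \frac{k-j}{(D+1)N}.
\end{eqnarray*}
Taking the product over all pairs $1 \leq j < k \leq |T_i|$ and then over $i \in [n]$, and using $\prod_{1 \leq j < k \leq |T_i|}(k-j) = \prod_{m=1}^{|T_i|-1} m! = \prod_{m=1}^{|T_i|-1} m!$ (the superfactorial), I would write
\begin{eqnarray*}
\prod_{i=1}^{n}\prod_{1\leq j<k\leq |T_i|}\frac{k-j}{\phi^{(i,\si)}_j-\phi^{(i,\si)}_k+k-j}
\;\geq\; \prod_{i=1}^{n} \frac{\prod_{m=1}^{|T_i|-1} m!}{\bigl((D+1)N\bigr)^{\binom{|T_i|}{2}}}.
\end{eqnarray*}
Now $\binom{|T_i|}{2} \leq \binom{N}{2} \leq N^2/2$, so the denominator contributes at least $e^{-c N^2 \log N}$-type decay at worst — which is slightly worse than $e^{-C_3 N^2}$. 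To recover the stated clean bound I would instead keep the combinatorial numerator: by Lemma~\ref{ll49} (Stirling), $\prod_{m=1}^{L} m! \geq e^{-C L^2}$ relative to $L^{L^2/2}$ — more precisely, $\log\bigl(\prod_{m=1}^{L} m!\bigr) = \sum_{m=1}^L \log m! \geq \sum_{m=1}^{L}\bigl(m\log m - m\bigr) \geq \tfrac12 L^2 \log L - C L^2$, and with $L = |T_i| - 1 = \Theta(N)$ one gets $\log\bigl(\prod m!\bigr) \geq \tfrac{|T_i|^2}{2}\log N - C' N^2$, which exactly cancels the $\log N$ from $\bigl((D+1)N\bigr)^{\binom{|T_i|}{2}}$ in the denominator, leaving a net bound $\geq e^{-C_3 N^2}$.

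The main obstacle is precisely this bookkeeping between the superfactorial numerator and the polynomial-in-$N$ denominator: a naive bound loses a spurious $N^2 \log N$ factor, so one has to be careful to pair the $\log N$ in $\log m! \approx m \log m$ against the $\log N$ in $\bigl((D+1)N\bigr)^{\binom{|T_i|}{2}}$ before the rest is absorbed into the $e^{-C_3 N^2}$. Concretely, I would bound
\begin{eqnarray*}
\log\!\left(\prod_{i=1}^{n}\prod_{1\leq j<k\leq |T_i|}\frac{k-j}{\phi^{(i,\si)}_j-\phi^{(i,\si)}_k+k-j}\right)
&\geq& \sum_{i=1}^{n}\left[\sum_{m=1}^{|T_i|-1}\log m! - \binom{|T_i|}{2}\log\bigl((D+1)N\bigr)\right]\\
&\geq& \sum_{i=1}^{n}\left[\frac{|T_i|^2}{2}\log N - C' N^2\right] - \sum_{i=1}^n \binom{|T_i|}{2}\log\bigl((D+1)N\bigr)\\
&\geq& -C_3 N^2,
\end{eqnarray*}
using $\sum_i \binom{|T_i|}{2} \leq \sum_i \frac{|T_i|^2}{2}$ so that the two $\log N$ contributions cancel up to an $O(N^2)$ error, and $n$, $D$, $C'$ are all constants independent of $N$. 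Exponentiating gives~(\ref{46iq}).
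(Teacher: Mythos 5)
Your overall strategy — bound each individual factor below by $(k-j)/(CN)$, take logs, and let Stirling absorb the superfactorial into an $e^{-O(N^2)}$ bound — is essentially the same as the paper's, which performs the analogous estimate but packages the superfactorial differently (regrouping $\prod_{j=1}^{|T_i|-1} j!$ against $\{(C_2+1)N\}^{\binom{|T_i|}{2}}$ into $\bigl(|T_i|!/\{(C_2+1)N\}^{|T_i|}\bigr)^{|T_i|}$, then applying Stirling once to $|T_i|!$). Both routes land on the same kind of constant $C_3$, and your log-level bookkeeping — pairing the $\frac{1}{2}L^2\log L$ from $\sum \log m!$ against the $\binom{|T_i|}{2}\log N$ from the denominator, with an $O(N^2)$ error — is sound.

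There is one genuine (if localized) gap in how you bound the numerator. You write $\phi^{(i,\si)}_j - \phi^{(i,\si)}_k \leq \phi^{(i,\si)}_1 \leq \mu_1 + N = DN$, citing Assumption~\ref{ap428}(4)(b). But that assumption only controls \emph{differences} $\mu_p - \mu_q$; it does not bound $\mu_1$ itself, and Lemma~\ref{ll46} is stated under Assumptions~\ref{ap423} and~\ref{ap428} alone (the linear growth $\mu_1 = O(N)$ comes only from the further scaling hypothesis~(\ref{abi}), which is not part of those assumptions). The fix — and this is what the paper does — is to avoid passing through $\phi^{(i,\si)}_1$ altogether: write $\phi^{(i,\si)}_j - \phi^{(i,\si)}_k = (\lambda_{j'} - \lambda_{k'}) + (\eta_{j'}^\si - \eta_{k'}^\si)$ for the indices $j' < k'$ in $\si^{-1}(T_i)$ realizing the two entries, then use Assumption~\ref{ap428}(4)(b) to bound $\lambda_{j'} - \lambda_{k'} \leq C_2 N$ directly and the definition~(\ref{et}) to bound $\eta_{j'}^\si - \eta_{k'}^\si \leq N - |T_i|$. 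This gives $\phi^{(i,\si)}_j - \phi^{(i,\si)}_k \leq (C_2+1)N$ using only the stated hypotheses, after which the rest of your estimate goes through verbatim with $D+1$ replaced by $C_2+1$.
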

 
 \begin{proof}Lemma \ref{ll46} can also be obtained as follows. Note that the left hand side of (\ref{46iq}) is exactly the reciprocal of $\prod_{i=1}^{n}s_{\phi^{(i,\si)}}(1,\ldots,1)$. The Schur polynomial $s_{\phi^{(i,\si)}}(1,\ldots,1)$ counts the number of perfect matchings on a contracting hexagon lattice with boundary partition given by $\phi^{(i,\si)}$. Under Assumption $\ref{ap428}$, all the boundaries of the contracting hexagon lattice grow linearly in $N$, and therefore the number of vertices in the contraction hexagon lattice is $O(N^2)$. Hence the total number of perfect matchings is bounded above by $e^{O(N^2)}$.
\end{proof}

\begin{proposition}\label{p436}Suppose Assumptions \ref{ap32} and \ref{ap428} hold, and let $\alpha$ be given as in Assumption \ref{ap32}. For each given $\{a_i,b_i\}_{i=1}^{n}$, when $\alpha$ is sufficiently large, for any $\sigma\notin \ol{\si}_0$ we have
\begin{eqnarray}
\label{gep}&&\left|\frac{\left(\prod_{i=1}^{n}x_{i,N}^{|\phi^{(i,\sigma_0)}(N)|}\right)\left(\prod_{i=1}^{n}s_{\phi^{(i,\sigma_0)}(N)}(1,\ldots,1)\right)}{\left(\prod_{i=1}^{n}x_{i,N}^{|\phi^{(i,\sigma)}(N)|}\right)\left(\prod_{i=1}^{n}s_{\phi^{(i,\sigma)}(N)}(1,\ldots,1)\right)
}\right|\\
&&\times\left|\frac{\left(\prod_{i<j,x_{\sigma_0(i),N}\neq x_{\sigma_0(j),N}}\frac{1}{x_{\sigma_0(i),N}-x_{\sigma_0(j),N}}\right)}{\left(\prod_{i<j,x_{\sigma(i),N}\neq x_{\sigma(j),N}}\frac{1}{x_{\sigma(i),N}-x_{\sigma(j),N}}\right)}\right|\notag\geq e^{CN^2}\notag
\end{eqnarray}
where $C>0$ is a constant independent of $N$ and $\si$, and increases as $\alpha$ increases. Indeed, we have
\begin{eqnarray*}
\lim_{\alpha\rightarrow\infty} C=\infty.
\end{eqnarray*}
\end{proposition}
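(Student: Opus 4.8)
The plan is to rewrite the ratio in (\ref{gep}) as the ratio of the $\ol{\si}_0$-summand to the $\ol{\si}$-summand of the formula (\ref{ses}), pass to the equivalent Weyl-type form of that formula displayed earlier in this section, and then estimate separately the three resulting factors. Substituting the Weyl character formula for each $s_{\phi^{(i,\si)}(N)}(1,\ldots,1)$ and using the identity $\prod_{i=1}^n x_{i,N}^{|\phi^{(i,\si)}(N)|}=\prod_{j=1}^N x_{\si(j)}^{\lambda_j}\prod_{i<j,\,x_{\si(i)}\neq x_{\si(j)}}x_{\si(i)}$, the $\ol{\si}$-summand of (\ref{ses}) becomes
\[\textstyle\bigl(\prod_{j=1}^N x_{\si(j)}^{\lambda_j}\bigr)\bigl(\prod_{i=1}^n s_{\phi^{(i,\si)}(N)}(1,\ldots,1)\bigr)\bigl(\prod_{i<j,\,x_{\si(i)}\neq x_{\si(j)}}\tfrac{x_{\si(i)}}{x_{\si(i)}-x_{\si(j)}}\bigr),\]
so the left-hand side of (\ref{gep}) equals $G_1G_2G_3$, where
\begin{gather*}
G_1=\left|\frac{x_{\si_0(1)}^{\lambda_1}\cdots x_{\si_0(N)}^{\lambda_N}}{x_{\si(1)}^{\lambda_1}\cdots x_{\si(N)}^{\lambda_N}}\right|,\qquad
G_2=\left|\frac{\prod_{i=1}^n s_{\phi^{(i,\si_0)}(N)}(1,\ldots,1)}{\prod_{i=1}^n s_{\phi^{(i,\si)}(N)}(1,\ldots,1)}\right|,\\
G_3=\left|\frac{\prod_{i<j,\,x_{\si_0(i)}\neq x_{\si_0(j)}}\frac{x_{\si_0(i)}}{x_{\si_0(i)}-x_{\si_0(j)}}}{\prod_{i<j,\,x_{\si(i)}\neq x_{\si(j)}}\frac{x_{\si(i)}}{x_{\si(i)}-x_{\si(j)}}}\right|.
\end{gather*}

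Next I would bound each factor from below. For $G_1$: by (\ref{xg}), and since $\min_{1\le i<j\le n}\tfrac{x_{i,N}}{x_{j,N}}\ge1$ by Assumption \ref{ap423}, one gets $G_1\ge\bigl(\min_{1\le i<j\le n}\tfrac{x_{i,N}}{x_{j,N}}\bigr)^{L}$ with $L$ the quantity in (\ref{dli}). As $\si\notin\ol{\si}_0$, some $p$ satisfies $x_{\si(p)}\neq x_{\si_0(p)}$, hence $\sum_{i=1}^n\sum_{j\in[n]\setminus\{i\}}|I_{i,j}^{\si}|\ge1$; then (\ref{lq}) together with Assumption \ref{ap428}(4)(b) gives $L\ge\tfrac{C_1}{2}N$. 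By Assumption \ref{ap32}, for all sufficiently large $N$ we have $\log\bigl(\min_{i<j}\tfrac{x_{i,N}}{x_{j,N}}\bigr)\ge\tfrac{\alpha}{2}N$, so $G_1\ge e^{\alpha C_1 N^2/4}$. For $G_3$: Lemma \ref{ll43} gives $G_3\ge1$. For $G_2$: the Weyl character formula expresses $\prod_{i=1}^n s_{\phi^{(i,\si_0)}(N)}(1,\ldots,1)$ as a product of factors $\tfrac{\phi^{(i,\si_0)}_j-\phi^{(i,\si_0)}_k+k-j}{k-j}\ge1$ (since $\phi^{(i,\si_0)}(N)$ is a partition), whence $G_2\ge\prod_{i=1}^n\prod_{1\le j<k\le|T_i|}\tfrac{k-j}{\phi^{(i,\si)}_j-\phi^{(i,\si)}_k+k-j}$, which is at least $e^{-C_3N^2}$ by (\ref{pbq}) and Lemma \ref{ll46}.

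Combining, the left-hand side of (\ref{gep}) is at least $e^{\alpha C_1 N^2/4}\cdot 1\cdot e^{-C_3N^2}=e^{CN^2}$ with $C=\tfrac{\alpha C_1}{4}-C_3$; since $C_1$ (from Assumption \ref{ap428}(4)(b)) and $C_3$ (from Lemma \ref{ll46}) are positive constants depending only on $n$ and $C_2$, hence independent of $N$, of $\si$, and of $\alpha$, we have $C>0$ once $\alpha>4C_3/C_1$ and $C\to\infty$ as $\alpha\to\infty$, which is the assertion. The main point requiring care is the bound $L\ge\tfrac{C_1}{2}N$ \emph{uniformly over all} $\si\notin\ol{\si}_0$: this is precisely where Assumption \ref{ap428}(4)(a) — equivalently, the pairwise disjointness of the sets $J_i$ — is used, through (\ref{lq}), and where Lemma \ref{l425} forces $\sum_{i}\sum_{j\neq i}|I_{i,j}^{\si}|\ge1$ whenever $\si$ moves any index off $\si_0$. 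Everything else is the bookkeeping identifying the two explicit rational factors in (\ref{gep}) with $G_1$ and $G_3$, plus the factorial estimate already packaged in Lemma \ref{ll46}.
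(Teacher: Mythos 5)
Your proposal is correct and follows essentially the same route as the paper: decompose the left-hand side of (\ref{gep}) into the three factors $G_1,G_2,G_3$ via the Weyl character formula and the power-of-$x_i$ identity, bound $G_3\geq 1$ by Lemma~\ref{ll43}, bound $G_2\geq e^{-C_3N^2}$ by (\ref{pbq}) and Lemma~\ref{ll46}, and bound $G_1$ below by $(\min_{i<j}x_{i,N}/x_{j,N})^{L}$ with $L\geq\tfrac{C_1}{2}N$ from (\ref{lq}) once one notes that $\si\notin\ol{\si}_0$ forces $\sum_{i\neq j}|I_{i,j}^{\si}|\geq1$. The only deviation is cosmetic: you take $\log(\min_{i<j}x_{i,N}/x_{j,N})\geq\tfrac{\alpha}{2}N$ for large $N$ to be safe with the $\liminf$ in Assumption~\ref{ap32}, which lands you on $C=\tfrac{\alpha C_1}{4}-C_3$ rather than the paper's $C=\tfrac{\alpha C_1}{2}-C_3$; both yield the required conclusion.
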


\begin{proof}Let $\mathcal{R}$ denote the left hand side of (\ref{gep}). By Lemma \ref{ll43}, we have
\begin{eqnarray}
\mathcal{R}
\geq\frac{\left(\prod_{i=1}^{N}x_{\si_0(i),N}^{\lambda_i}\right)\left(\prod_{i=1}^{n}s_{\phi^{(i,\sigma_0)}(N)}(1,\ldots,1)\right)
}{\left(\prod_{i=1}^{N}x_{\si(i),N}^{\lambda_i}\right)\left(\prod_{i=1}^{n}s_{\phi^{(i,\sigma)}(N)}(1,\ldots,1)\right)
}\label{r1}
\end{eqnarray}
By Lemma \ref{ll46}, we have
\begin{eqnarray}
&&\frac{\prod_{i=1}^{n}s_{\phi^{(i,\sigma_0)}(N)}(1,\ldots,1)\notag
}{\prod_{i=1}^{n}s_{\phi^{(i,\sigma)}(N)}(1,\ldots,1)}\geq e^{-C_3N^2} \label{r2}
\end{eqnarray}
where $C_{3}>0$ is a constant independent of $N$.
Under Assumption \ref{ap32} and Assumption \ref{ap428}, we have
\begin{eqnarray}
\left|\frac{x_{\si_0(1),N}^{\lambda_{1}}\cdots x_{\si_0(N),N}^{\lambda_N}}{x_{\si(1),N}^{\lambda_1}\cdots x_{\si(N),N}^{\lambda_N}}\right|\geq
 \left(\min_{1\leq i<j\leq n}\frac{x_{i,N}}{x_{j,N}}\right)^{\frac{C_1N}{2}\left(\sum_{1\leq i\leq n}\sum_{1\leq j\leq n,j\neq i}|I_{ij}^{\si}|\right)}\label{r3}
\end{eqnarray}
If $\alpha$ in Assumption \ref{ap32} satisfies
\begin{eqnarray*}
\alpha>\frac{2 C_{3}}{C_1},
\end{eqnarray*}
we obtain
\begin{eqnarray*}
\mathcal{R}\geq e^{-\left(\frac{C_1\alpha}{2}-C_3\right)N^2}
\end{eqnarray*}
Choose $C=\frac{C_1\alpha}{2}-C_{3}$, then the lemma follows.
\end{proof}

\begin{lemma}\label{cm}For $i\in[n]$, let $J_i$ be defined as in (\ref{ji}).
Under Assumptions \ref{ap423} and \ref{ap428}, assume that 
\begin{eqnarray*}
J_i=\left\{\begin{array}{cc}\{d_i,d_i+1,\ldots,d_{i+1}-1\}& \mathrm{if}\ 1\leq i\leq n-1\\ \{d_n, d_n+1,\ldots,s\}& \mathrm{if}\ i=n \end{array}\right.
\end{eqnarray*}
where $d_1,\ldots,d_n$ are positive integers satisfying
\begin{eqnarray*}
1=d_1<d_2<\ldots <d_n\leq s
\end{eqnarray*}
Let
\begin{eqnarray*}
d_{n+1}:=s+1.
\end{eqnarray*}
For $j\in[s]$, let $a_j,b_j$ be given by (\ref{abi}). 
If $i\in[n]$, for $0\leq k\leq d_{i+1}-d_i-1$, let
\begin{eqnarray}
\beta_{i,k}&=&n\left(a_1+\sum_{l=2}^{s-d_i-k+1}(a_l-b_{l-1})\right)+n-i+1-n\left(\sum_{l=s-d_i-k+1}^{s-d_i+1}(b_l-a_l)\right)\label{btik}\\
\gamma_{i,k}&=&n\left(a_1+\sum_{l=2}^{s-d_i-k+1}(a_l-b_{l-1})\right)+n-i+1-n\left(\sum_{l=s-d_i-k+2}^{s-d_i+1}(b_l-a_l)\right).\label{cmik}
\end{eqnarray}

Then the counting measures of  $\phi^{(i,\si_0)}(N)$ converge weakly to a limit measure $\bm_i$ as $N\rightarrow\infty$. Moreover,
if $i\in[n]$, for $0\leq k\leq d_{i+1}-d_i-1$, the limit counting measure $\mathbf{m}_i$ is a probability measure on $[\beta_{i,1},\gamma_{i,d_{i+1}-d_i-1}]$ with density given by
\begin{eqnarray*}
\frac{d\mathbf{m}_i}{dx}=\left\{\begin{array}{cc}1,& \mathrm{if}\ \beta_{i,k}< x< \gamma_{i,k};\\0,& \mathrm{if}\ \gamma_{i,k}\leq x\leq \beta_{i,k+1}. \end{array}\right.
\end{eqnarray*}

\end{lemma}
\begin{proof}First of all, it is straightforward to check that under Assumptions \ref{ap423} and \ref{ap428}, for $i\in[n]$, the limiting measure $\bm_i$ has constant densities of 0's and 1's on finitely many alternating intervals. It suffices to determine the endpoints of these intervals on which $\bm_i$ has constant densities.

Note that under Assumption \ref{ap423}, we have
\begin{eqnarray*}
x_{\sigma_0\left(\frac{lN}{n}+1\right)}=\ldots=x_{\sigma_0\left(\frac{(l+1)N}{n}\right)}=x_{l+1},\ \forall l\in\{0,\ldots n-1\}
\end{eqnarray*}
Then for $i\in[n]$, $j\in[N]$, such that
\begin{eqnarray}
j=(i-1)\frac{N}{n}+p\label{jrp}
\end{eqnarray}
for some $p\in\left[\frac{N}{n}\right]$,
we have
\begin{eqnarray*}
\eta_j^{\sigma_0}(N)=\frac{(n-i)N}{n}.
\end{eqnarray*}
Then
\begin{eqnarray}
\lambda_j=\mu_t,\ t\in\{d_i,d_i+1,\ldots,d_{i+1}-1\}\label{ljmt}
\end{eqnarray}

Under Assumption \ref{ap428}, we obtain
\begin{eqnarray*}
\phi^{(i,\sigma_0)}_{p}(N)=\frac{(n-i)N}{n}+\mu_{t}.
\end{eqnarray*}
Let $k=t-d_i$, then $k\in\left\{0,1,\ldots,d_{i+1}-d_i-1\right\}$. From (\ref{abi}), we obtain
\begin{eqnarray*}
\lim_{N\rightarrow\infty}\frac{\mu_t}{N}=a_1+\sum_{l=2}^{s-t+1}(a_l-b_{l-1})
\end{eqnarray*}
Moreover,
\begin{eqnarray*}
1-n\left(\sum_{l=s-t+2}^{s-d_i+1}(b_l-a_l)\right)\geq\lim_{N\rightarrow\infty}\frac{\frac{N}{n}-p}{\frac{N}{n}}\geq 1-n\left(\sum_{l=s-t+1}^{s-d_i+1}(b_l-a_l)\right)
\end{eqnarray*}
Hence for each fixed $t\in J_i$, for all $p\in\left[\frac{N}{n}\right]$ satisfying (\ref{ljmt}) and (\ref{jrp}),  we have
\begin{eqnarray*}
\lim_{N\rightarrow\infty}\frac{\phi_p^{(i,\sigma_0)}(N)+\frac{N}{n}-p}{\frac{N}{n}}\in\left[\beta_{i,k},\gamma_{i,k}\right]
\end{eqnarray*}
One can check that $\bm_i$ has density 1 in $(\beta_{i,k},\gamma_{i,k})$ for $k\in\left\{0,1,\ldots,d_{i+1}-d_i-1\right\}$, and density 0 everywhere else. 
Then the lemma follows.
\end{proof}

\begin{lemma}\label{l440}Let $\si_0$ satisfy (\ref{sz}), and let $\ol{\si}_0\in [\Si_N/ \Si_N^X]^r$. For $1\leq i\leq k$, assume $\frac{u_i}{x_i}$ is in an open complex neighborhood of $1$.  For any $\si\in \Si_N$, let
\begin{eqnarray*}
G_{\si}&=& \left(\prod_{i=1}^{n}x_i^{|\phi^{(i,\sigma)}(N)|}\right)\left(\prod_{i=1}^{r}s_{\phi^{(i,\sigma)}(N)}\left(\frac{u_i}{x_i},\frac{u_{n+i}}{x_i}\ldots,\frac{u_{qn+i}}{x_i},1,\ldots,1\right)\right)\notag\\
&&\times\left(\prod_{i=r+1}^{n}s_{\phi^{(i,\sigma)}(N)}\left(\frac{u_i}{x_i},\frac{u_{n+i}}{x_i}\ldots,\frac{u_{(q-1)n+i}}{x_i},1,\ldots,1\right)\right)\left(\prod_{i<j,x_{\sigma(i)}\neq x_{\sigma(j)}}\frac{1}{w_{\sigma(i)}-w_{\sigma(j)}}\right)
\end{eqnarray*}
Suppose that Assumption \ref{ap32} holds. When $\alpha$ in Assumption \ref{ap32} is sufficiently large, we have
\begin{eqnarray*}
\left|\frac{G_{\si_0}}{G_{\si}}\right|\geq e^{CN^2}
\end{eqnarray*}
where $C>0$ is a constant independent of $\si$, $N$ and $(u_1,\ldots,u_k)$.
\end{lemma}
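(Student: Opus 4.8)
The plan is to deduce the bound from Proposition \ref{p436} by controlling the multiplicative error incurred when the arguments $(1,\ldots,1)$ of the Schur factors in $G_\si$ are replaced by the perturbed arguments $u_j/x_i$. We treat $k$ as fixed as $N\to\infty$ (as it is in all applications of the lemma), and take the open neighbourhood of $1$ containing each $u_i/x_i$ to be a fixed small disc $\{|z-1|<\epsilon\}$ with $\epsilon<\tfrac14$. Since $G_\si$ depends only on the coset $\ol\si$ (Lemmas \ref{pee} and \ref{l437}), we may assume $\si\notin\ol{\si}_0$. Writing $\wt G_\si$ for $G_\si$ with every $u_i$ replaced by $x_i$ — so that Proposition \ref{p436} reads precisely $|\wt G_{\si_0}/\wt G_\si|\geq e^{CN^2}$ with $C=C_1\alpha/2-C_3\to\infty$ as $\alpha\to\infty$ (in particular $\wt G_{\si_0}\neq0$) — I would factor
\[
\left|\frac{G_{\si_0}}{G_\si}\right|=\left|\frac{\wt G_{\si_0}}{\wt G_\si}\right|\cdot\left|\frac{G_{\si_0}}{\wt G_{\si_0}}\right|\cdot\left|\frac{\wt G_\si}{G_\si}\right|
\]
(if $G_\si=0$ the last factor is $+\infty$ and there is nothing to prove, so assume $G_\si\neq0$). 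It then suffices to bound the last two factors from below by $e^{-C'N^2}$ for constants that are independent of $\alpha$, $\si$, $N$ and $u$; the conclusion follows on taking $\alpha$, hence $C$, large compared with those constants.

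In both $G_\si/\wt G_\si$ and $G_{\si_0}/\wt G_{\si_0}$ the monomial factors $\prod_i x_i^{|\phi^{(i,\si)}(N)|}$ cancel, leaving a ratio of Schur-function products times the ratio $\prod_{i<j,\,x_{\si(i)}\neq x_{\si(j)}}\frac{x_{\si(i)}-x_{\si(j)}}{w_{\si(i)}-w_{\si(j)}}$ of Vandermonde-type factors. For the Vandermonde ratio I would note that $w_\ell=x_\ell$ for $\ell>k$, so only the $O(N)$ pairs with $\si(i)\leq k$ or $\si(j)\leq k$ contribute a factor $\neq1$ (here $k$ fixed); and for each such pair Assumption \ref{ap32} gives $|x_{\si(i)}-x_{\si(j)}|\geq\tfrac12\max\{x_{\si(i)},x_{\si(j)}\}$ once $N$ is large (distinct $x$'s differ by at least a factor $e^{\alpha N/2}$), so, using $|u_\ell-x_\ell|<\epsilon x_\ell$ and $\epsilon<\tfrac14$, that factor and its reciprocal are bounded by an absolute constant. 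Hence both Vandermonde ratios are $e^{O(N)}$, uniformly in $\si$ and $u$. For the Schur-function ratio in $G_\si/\wt G_\si$ I would invoke the monomial expansion $s_\phi=\sum_T\prod_c z_{T(c)}$ over semistandard tableaux: if every $|z_j|\leq1+\epsilon$ then $|s_\phi(z_1,\ldots,z_t,1,\ldots,1)|\leq(1+\epsilon)^{|\phi|}\,s_\phi(1,\ldots,1)$, and since $\sum_{i=1}^{n}|\phi^{(i,\si)}(N)|=|\lambda|+\sum_{j}\eta^{\si}_j=O(N^2)$ uniformly in $\si$ (Assumption \ref{ap428} gives $\lambda_j=O(N)$), this ratio is at most $e^{C''\epsilon N^2}$. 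Therefore $|\wt G_\si/G_\si|\geq e^{-C''\epsilon N^2-O(N)}$.

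The remaining, and genuinely delicate, step is the matching lower bound for the Schur-function ratio in $G_{\si_0}/\wt G_{\si_0}$: there the monomial bound is useless because of possible cancellation among complex Schur values. The point is that each factor $s_{\phi^{(i,\si_0)}(N)}\!\big(\tfrac{u_i}{x_i},\ldots,\tfrac{u_{qn+i}}{x_i},1,\ldots,1\big)$ has only a bounded number ($\leq\lfloor k/n\rfloor+1$) of variables different from $1$, while by Lemma \ref{cm} the counting measure of $\phi^{(i,\si_0)}(N)$ converges weakly to $\bm_i$, with $\phi^{(i,\si_0)}_1(N)=O(N)$ by Assumption \ref{ap428}. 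Hence the known asymptotics of normalized Schur functions near $(1,\ldots,1)$ (\cite{bg}; see also \cite{GP15,bg16}) apply with $M=|T_i|=N/n$: after shrinking $\epsilon$ if necessary, $\tfrac1{|T_i|}\log\big[s_{\phi^{(i,\si_0)}(N)}(\tfrac{u_i}{x_i},\ldots,1,\ldots,1)/s_{\phi^{(i,\si_0)}(N)}(1,\ldots,1)\big]$ converges, uniformly in $u$, to a finite holomorphic expression in terms of $H_{\bm_i}$; in particular this Schur value is nonzero for $N$ large and the ratio is $e^{O(N)}$, so $|G_{\si_0}/\wt G_{\si_0}|\geq e^{-O(N)}$. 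Combining the three estimates with Proposition \ref{p436} gives $|G_{\si_0}/G_\si|\geq e^{CN^2-C''\epsilon N^2-O(N)}$, which exceeds $e^{\widehat CN^2}$ for a positive constant $\widehat C$ once $\alpha$ (hence $C$) is large, with $\widehat C$ independent of $\si$, $N$ and $u$. I expect this last step — the lower bound on $|G_{\si_0}|$ — to be the main obstacle, because it rests on the non-degeneracy packaged in the Gorin--Panova asymptotics rather than on any elementary inequality.
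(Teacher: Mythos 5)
Your proof is correct and reaches the same conclusion, but it organizes the estimate differently from the paper and uses a different tool for one of the key bounds. The paper works directly with $|G_{\si_0}/G_\si|$ and observes that the Vandermonde factor $\prod_{i<j,x_{\sigma(i)}\neq x_{\sigma(j)}}|w_{\sigma(i)}-w_{\sigma(j)}|^{-1}$ is \emph{the same} for $\si$ and $\si_0$ (both products run over the same collection of unordered pairs in absolute value), so that ratio is exactly $1$ and never needs estimating; you instead factor through $\wt G_\si$ and must estimate the mixed ratio $\prod(x_{\sigma(i)}-x_{\sigma(j)})/(w_{\sigma(i)}-w_{\sigma(j)})$, which you correctly control as $e^{O(N)}$, a slightly less elegant but valid route. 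The more substantive difference is the upper bound on $|s_{\phi^{(i,\si)}}(u/x,\ldots,1,\ldots,1)/s_{\phi^{(i,\si)}}(1,\ldots,1)|$ for general $\si$: the paper rewrites this ratio as a prefactor times an HCIZ integral $\int_{U(N)}e^{\mathrm{tr}(U^*A_NUB_N)}dU$, bounded by $e^{O(N)}$ because $A_N$ has only $q+1$ nonzero entries of size $O(\epsilon)$ and each unit row of $U$ contributes at most $\max_j|B_N(j,j)|=O(N)$; you instead use the elementary positivity/monomial bound $|s_\phi(z)|\leq(1+\epsilon)^{|\phi|}s_\phi(1,\ldots,1)$, which gives only $e^{O(\epsilon N^2)}$. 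Your weaker bound is still adequate because it comes with the small factor $\epsilon$, so it is absorbed once $\alpha$ is large (equivalently, once the neighbourhood of $1$ is small enough), exactly as you argue; the HCIZ bound is sharper and avoids the $\epsilon$-shrinking step. Both proofs handle the $\si_0$ term identically, via the uniform asymptotics of normalized Schur functions from \cite{bg} (Theorem 4.2), to get the two-sided $e^{\pm O(N)}$ control, and both then invoke Proposition \ref{p436} for the dominant $e^{CN^2}$ factor. You also correctly note the implicit hypothesis $\si\notin\ol\si_0$, which the lemma statement omits but the cited Proposition \ref{p436} requires.
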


\begin{proof}First of all note that
\begin{eqnarray*}
\left|\frac{\prod_{i<j,x_{\sigma(i)}\neq x_{\sigma(j)}}\frac{1}{w_{\sigma(i)}-w_{\sigma(j)}}}{\prod_{i<j,x_{\sigma_0(i)}\neq x_{\sigma_0(j)}}\frac{1}{w_{\sigma_0(i)}-w_{\sigma_0(j)}}}\right|=1
\end{eqnarray*}
We can express the quotient of two Schur polynomials as an HCIZ integral as follows
\begin{eqnarray*}
&&\frac{s_{\phi^{(i,\sigma)}(N)}\left(\frac{u_i}{x_i},\frac{u_{n+i}}{x_i}\ldots,\frac{u_{qn+i}}{x_i},1,\ldots,1\right)}{s_{\phi^{(i,\sigma)}(N)}\left(1,\ldots,1\right)}\\
&=&\left[\prod_{1\leq j< l\leq q+1}\frac{\log\left(\frac{u_{(j-1)n+i}}{x_i}\right)-\log\left(\frac{u_{(l-1)n+i}}{x_i}\right)}{\frac{u_{(j-1)n+i}}{x_i}-\frac{u_{(l-1)n+i}}{x_i}}\right]\left[\prod_{1\leq t\leq q+1}\prod_{q+2\leq j\leq N}\frac{\log\left(\frac{u_{(t-1)n+i}}{x_i}\right)}{\frac{u_{(t-1)n+i}}{x_i}-1}\right]\\
&&\times\int_{U(N)}e^{\mathrm{tr}(U^*A_NUB_N)}dU
\end{eqnarray*}
where
\begin{eqnarray*}
A_N&=&\mathrm{diag}\left[\log\left(\frac{u_i}{x_i}\right),\log\left(\frac{u_{n+i}}{x_i}\right),\ldots,\log\left(\frac{u_{qn+i}}{x_i}\right),0,\ldots,0\right]\\
B_N&=&\mathrm{diag}\left[\phi^{(i,\sigma)}_1(N)+N-1,\phi^{(i,\sigma)}_2(N)+N-2,\ldots,\phi^{(i,\sigma)}_N(N)\right]
\end{eqnarray*}
Under Assumptions \ref{ap423} and \ref{ap428}, we have
\begin{eqnarray*}
&&|\phi^{(i,\sigma)}_j(N)+N-j|\leq CN\\
&&\left|\log\left(\frac{u_{sn+i}}{x_i}\right)\right|\leq C,\ \mathrm{for}\ 0\leq s\leq q
\end{eqnarray*}
for all $j\in[N]$ and $i\in[n]$, where $C>0$ is a constant independent of $i,j$ and $N$.
We obtain
\begin{eqnarray*}
\left|\int_{U(N)}e^{\mathrm{tr}(U^*A_NUB_N)}dU\right|&=&\left|\int_{U(N)} e^{\sum_{1\leq i\leq q+1,1\leq j\leq N}A_N(i,i)U(i,j)B_N(j,j)\ol{U(i,j)}}dU\right|\\
&\leq & e^{(q+1)C^2N}
\end{eqnarray*}
Then there exists a constant $C>0$ (which might be different from the $C$ above, we abuse the notation here, and similar below), such that
\begin{eqnarray}
\left|\frac{s_{\phi^{(i,\sigma)}(N)}\left(\frac{u_i}{x_i},\frac{u_{n+i}}{x_i}\ldots,\frac{u_{qn+i}}{x_i},1,\ldots,1\right)}{s_{\phi^{(i,\sigma)}(N)}\left(1,\ldots,1\right)}\right|\leq e^{CN}.\label{ecn}
\end{eqnarray}
Note also that for each $i\in[N]$, as $N\rightarrow \infty$, by Lemma \ref{cm} the counting measure for $\phi^{(i,\si_0)}(N)$ converges to a measure $\mathbf{m}_i$. By Theorem 4.2 of \cite{bg}, we have
\begin{eqnarray*}
&&\lim_{N\rightarrow\infty}\frac{1}{N}\log\frac{s_{\phi^{(i,\sigma_0)}(N)}\left(\frac{u_i}{x_i},\frac{u_{n+i}}{x_i}\ldots,\frac{u_{qn+i}}{x_i},1,\ldots,1\right)}{s_{\phi^{(i,\sigma_0)}(N)}\left(1,\ldots,1\right)}\\
&=&\frac{1}{n}\left[H_{\mathbf{m}_i}\left(\frac{u_i}{x_i}\right)+H_{\mathbf{m}_i}\left(\frac{u_{n+i}}{x_i}\right)+\ldots+H_{\mathbf{m}_i}\left(\frac{u_{qn+i}}{x_i}\right)\right]
\end{eqnarray*}
where $H_{\mathbf{m}_i}$ is a function defined by (\ref{hmi}).
 In particular, $H_{\mathbf{m}_i}(u)$ is an holomorphic function of $u$ when $u$ is in an open complex neighborhood of $1$. Therefore there exists constant $C>0$, such that
\begin{eqnarray}
e^{-CN}\leq \left|\frac{s_{\phi^{(i,\sigma_0)}(N)}\left(\frac{u_i}{x_i},\frac{u_{n+i}}{x_i}\ldots,\frac{u_{qn+i}}{x_i},1,\ldots,1\right)}{s_{\phi^{(i,\sigma_0)}(N)}\left(1,\ldots,1\right)}\right|\leq e^{CN}\label{pmcn}
\end{eqnarray}
when $\frac{u_i}{x_i},\frac{u_{n+i}}{x_i},\ldots,\frac{u_{qn+i}}{x_i}$ are in an open complex neighborhood of $1$ and when $N$ is sufficiently large. Then
\begin{eqnarray*}
\left|\frac{s_{\phi^{(i,\sigma_0)}(N)}\left(\frac{u_i}{x_i},\frac{u_{n+i}}{x_i}\ldots,\frac{u_{qn+i}}{x_i},1,\ldots,1\right)}{s_{\phi^{(i,\sigma)}(N)}\left(\frac{u_i}{x_i},\frac{u_{n+i}}{x_i}\ldots,\frac{u_{qn+i}}{x_i},1,\ldots,1\right)}\right|=D\cdot E\cdot F
\end{eqnarray*}
where
\begin{eqnarray*}
D&=& \left|\frac{s_{\phi^{(i,\sigma_0)}(N)}\left(\frac{u_i}{x_i},\frac{u_{n+i}}{x_i}\ldots,\frac{u_{qn+i}}{x_i},1,\ldots,1\right)}{s_{\phi^{(i,\sigma_0)}(N)}\left(1,\ldots,1\right)}\right|\\
E&=&\left|\frac{s_{\phi^{(i,\sigma_0)}(N)}\left(1,\ldots,1\right)}{s_{\phi^{(i,\sigma)}(N)}\left(1,\ldots,1\right)}\right|\\
F&=&\left|\frac{s_{\phi^{(i,\sigma)}(N)}\left(1,\ldots,1\right)}{s_{\phi^{(i,\sigma)}(N)}\left(\frac{u_i}{x_i},\frac{u_{n+i}}{x_i}\ldots,\frac{u_{qn+i}}{x_i},1,\ldots,1\right)}\right|
\end{eqnarray*}
For some constant $C>0$, we have $D\geq e^{-CN}$ by (\ref{pmcn}), $F\geq e^{-CN}$ by (\ref{ecn}). Then the lemma follows from Proposition \ref{p436}.
\end{proof}

Recall that $J_i$ was defined as in (\ref{ji}). 

\begin{lemma}\label{l319}For $i\in[n]$, by Lemma \ref{cm} let $\mathbf{m}_i$ is the limit of counting measures for the partition $\phi^{(i,\si_0)}$ as $N\rightarrow\infty$, where the counting measure for a partition is defined by (\ref{ml}). Let $H_{\mathbf{m}_i}$ be defined by (\ref{hmi}). Under Assumptions \ref{ap423}, \ref{ap32} and \ref{ap428}, the moment generating function for $\bm_i$, as defined by (\ref{smi}), is given by
\begin{eqnarray*}
S_{\mathbf{m}_i}(z)=\log\left[\prod_{j=0}^{d_{i+1}-d_i-1}\frac{1-\beta_{i,j}z}{1-\gamma_{i,j}z}\right].
\end{eqnarray*}

\end{lemma}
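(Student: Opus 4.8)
The plan is to combine the explicit description of $\mathbf{m}_i$ furnished by Lemma \ref{cm} with the integral representation of its moment generating function. Since $\mathbf{m}_i$ is a probability measure we have $M_0(\mathbf{m}_i)=\int_{\RR}\mathbf{m}_i(dx)=1$, and since by Lemma \ref{cm} the measure $\mathbf{m}_i$ is supported on the bounded set $\bigcup_{k}[\beta_{i,k},\gamma_{i,k}]$, for all $z$ in a small enough neighborhood of $0$ one has $|xz|<1$ uniformly on $\mathrm{supp}(\mathbf{m}_i)$, so the defining series of $S_{\mathbf{m}_i}$ sums to
\begin{eqnarray*}
S_{\mathbf{m}_i}(z)=\sum_{k\geq 0}M_k(\mathbf{m}_i)\,z^{k+1}=z\int_{\RR}\left(\sum_{k\geq 0}(xz)^k\right)\mathbf{m}_i(dx)=z\int_{\RR}\frac{\mathbf{m}_i(dx)}{1-xz},
\end{eqnarray*}
the interchange of sum and integral being legitimate by dominated convergence on that neighborhood.

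Next I would substitute the density from Lemma \ref{cm}: $\mathbf{m}_i$ has density $1$ on each of the disjoint intervals $(\beta_{i,k},\gamma_{i,k})$ for $k=0,\dots,d_{i+1}-d_i-1$ and density $0$ elsewhere. Using the antiderivative $\int\frac{dx}{1-xz}=-\frac1z\log(1-xz)$ (valid with the principal branch since for $z$ near $0$ the quantities $1-\beta_{i,k}z$ and $1-\gamma_{i,k}z$ are near $1$) this gives
\begin{eqnarray*}
\int_{\RR}\frac{\mathbf{m}_i(dx)}{1-xz}=\sum_{k=0}^{d_{i+1}-d_i-1}\int_{\beta_{i,k}}^{\gamma_{i,k}}\frac{dx}{1-xz}=\frac1z\sum_{k=0}^{d_{i+1}-d_i-1}\log\frac{1-\beta_{i,k}z}{1-\gamma_{i,k}z}.
\end{eqnarray*}
Multiplying by $z$ and combining logarithms yields
\begin{eqnarray*}
S_{\mathbf{m}_i}(z)=\sum_{k=0}^{d_{i+1}-d_i-1}\log\frac{1-\beta_{i,k}z}{1-\gamma_{i,k}z}=\log\left[\prod_{j=0}^{d_{i+1}-d_i-1}\frac{1-\beta_{i,j}z}{1-\gamma_{i,j}z}\right],
\end{eqnarray*}
which is the asserted formula, as an analytic identity in a neighborhood of $z=0$ (equivalently, as an identity of formal power series).

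There is no genuine difficulty in this argument; once Lemma \ref{cm} is available the proof is a direct elementary computation, and the only points deserving a remark are the termwise integration (handled by the compactness of $\mathrm{supp}(\mathbf{m}_i)$) and the consistent choice of branch of the logarithm (handled by working near $z=0$, which suffices since $S_{\mathbf{m}_i}$ is defined at the origin). Alternatively, one can avoid analytic considerations entirely and check the identity coefficient by coefficient: expanding $\log\frac{1-\beta z}{1-\gamma z}=\sum_{k\geq 1}\frac{\gamma^k-\beta^k}{k}z^k$ and summing over the intervals, the coefficient of $z^k$ on the right-hand side is $\sum_{j}\frac{\gamma_{i,j}^k-\beta_{i,j}^k}{k}=\int_{\RR}x^{k-1}\mathbf{m}_i(dx)=M_{k-1}(\mathbf{m}_i)$, which matches the coefficient of $z^k$ in $S_{\mathbf{m}_i}(z)$ by definition.
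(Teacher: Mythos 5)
Your proposal is correct and takes essentially the same route as the paper: both explicitly evaluate the moments against the union-of-intervals density supplied by Lemma \ref{cm} and resum the resulting geometric/logarithmic series. The only cosmetic difference is that the paper first assembles the Stieltjes transform $\mathrm{St}_{\bm_i}(t)=\sum_j\log\frac{t-\beta_{i,j}}{t-\gamma_{i,j}}$ and then substitutes $t=1/z$, whereas you resum directly into $S_{\bm_i}(z)=z\int\frac{\bm_i(dx)}{1-xz}$; these are the same computation.
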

\begin{proof}Under Assumption \ref{ap423}, \ref{ap32} and \ref{ap428}, the components in $\phi^{(i,\si_0)}$ takes finitely many values for all $N$. Note that if $\si_0(j)\in T_i$, then
\begin{eqnarray*}
\eta_j^{\si_0}=N-\frac{i N}{n}.
\end{eqnarray*}
Then
\begin{itemize}
\item if $\si_0(j)\in T_i$, $i\in[n]$, then there exists an integer $a$, such that $0\leq a\leq d_{i+1}-d_i-1$ and
\begin{eqnarray*}
&&\lambda_j=\mu_{d_i+a}=\sum_{l=2}^{s-d_i-a+1}(A_l-B_{l-1}-1)
\end{eqnarray*}
\end{itemize}
By Lemma \ref{cm}, for each $i\in[n]$ the $k$-th moment $M_k(\mathbf{m}_i)$ of $\mathbf{m}_i$ is 
\begin{eqnarray*}
M_k(\mathbf{m}_i)=\sum_{j=0}^{d_{i+1}-d_i-1}\frac{\gamma_{i,j}^{k+1}-\beta_{i,j}^{k+1}}{k+1}
\end{eqnarray*}
Then the Stieljes transformation for $\mathbf{m}_i$ is
\begin{eqnarray*}
\mathrm{St}_{\mathbf{m}_i}(t)&=&\frac{1}{t}+\frac{M_1(\mathbf{m}_i)}{t^2}+\frac{M_2(\mathbf{m}_i)}{t^3}+\ldots
=\sum_{j=0}^{d_{i+1}-d_i-1}\log\frac{t-\beta_{i,j}}{t-\gamma_{i,j}}.
\end{eqnarray*}
Then the lemma follows from the fact that $S_{\mathbf{m}_i}(z)=\mathrm{St}_{\mathbf{m}_i}\left(\frac{1}{z}\right)$.
\end{proof}

By (\ref{hmi}), we can also compute
\begin{eqnarray*}
H'_{\mathbf{m}_i}(u)=\frac{1}{u S_{\mathbf{m}_i}^{(-1)}(\log u)}-\frac{1}{u-1}
\end{eqnarray*}
By Lemma \ref{l319}, we obtain
\begin{eqnarray*}
H'_{\mathbf{m}_i}(u)=\frac{1}{u z}-\frac{1}{u-1};
\end{eqnarray*}
where $z$ and $u$ satisfy the following condition:
\begin{eqnarray*}
u=\prod_{j=1}^{d_{i+1}-d_i-1}\frac{1-\beta_{i,j}z}{1-\gamma_{i,j}z},\qquad \forall i\in[n].
\end{eqnarray*}

\noindent\textbf{Proof of Theorem \ref{tm2}.} By Proposition \ref{p421}, both $s_{\lambda(N)}(u_1x_{1,N},\ldots,u_kx_{1,N},x_{k+1,N},\ldots,x_{N,N})$ and $s_{\lambda(N)}(x_{1,N},\ldots,x_{N,N})$ can be expressed as a sum of $\left|[\Si_N/\Si_N^X]^r\right|$ terms. Under Assumption \ref{ap32}, we have
\begin{eqnarray*}
\left|[\Si_N/\Si_N^X]^r\right|=\frac{N!}{\left[\left(\frac{N}{n}\right)!\right]^n}
\end{eqnarray*}
By Stirling's formula we obtain
\begin{eqnarray}
\lim_{N\rightarrow\infty}\left|[\Si_N/\Si_N^X]^r\right|^{\frac{1}{N}}=n.\label{Nn}
\end{eqnarray}
By Proposition \ref{p436} and (\ref{Nn}),  when $\alpha$ in Assumption \ref{ap32} is sufficiently large,
\begin{eqnarray*}
&&s_{\lambda(N)}(x_1,\ldots,x_N)\\
&=&\left(\prod_{i=1}^{n}x_i^{|\phi^{(i,\sigma_0)}(N)|}\right)\left(\prod_{i=1}^{n}s_{\phi^{(i,\sigma_0)}(N)}(1,\ldots,1)\right)\left(\prod_{i<j,x_{\sigma_0(i)}\neq x_{\sigma_0(j)}}\frac{1}{x_{\sigma_0(i)}-x_{\sigma_0(j)}}\right)\left(1+e^{-CN^2}\right)
\end{eqnarray*} 
For $1\leq i\leq N$, let
\begin{eqnarray*}
w_{i,N}=\left\{\begin{array}{cc}u_ix_{i,N}&\mathrm{for}\ 1\leq i\leq k\\x_{i,N}&\mathrm{for}\ k+1\leq i\leq N \end{array}\right.
\end{eqnarray*}

When each one of $u_1,\ldots,u_k$ is in an open complex neighborhood of $1$, respectively, by Proposition \ref{p437} and Lemma \ref{l440}, we have
\begin{eqnarray*}
&&s_{\lambda(N)}(w_{1,N},\ldots,w_{N,N})\\
&=&\left(\prod_{i=1}^{n}x_{i,N}^{|\phi^{(i,\sigma_0)}(N)|}\right)\left(\prod_{i=1}^{r}s_{\phi^{(i,\sigma_0)}(N)}\left(u_i,u_{n+i},\ldots,u_{qn+i},1,\ldots,1\right)\right)\notag\\
&&\times\left(\prod_{i=r+1}^{n}s_{\phi^{(i,\sigma_0)}(N)}\left(u_i,u_{n+i},\ldots,u_{(q-1)n+i},1,\ldots,1\right)\right)\notag\\
&&\times\left(\prod_{i<j,x_{\sigma_0(i)}\neq x_{\sigma_0(j)}}\frac{1}{w_{\sigma_0(i),N}-w_{\sigma_0(j),N}}\right)\left(1+e^{-CN^2}\right)
\end{eqnarray*}
where $w_1,\ldots,w_N$ is defined as in Lemma \ref{l440}, for some constant $C>0$ independent of $N$.

Therefore,
\begin{eqnarray*}
\lim_{N\rightarrow\infty}\frac{1}{N}\log \frac{s_{\lambda(N)}(u_1x_{1,N},\ldots,u_kx_{k,N},x_{k+1,N},\ldots,x_{N,N})}{s_{\lambda(N)}(x_{1,N},\ldots,x_{N,N})}
=S_1+S_2,
\end{eqnarray*}
where
\begin{eqnarray*}
S_1
&=&\lim_{N\rightarrow\infty}\frac{1}{N}\log\\
&&\left[\frac{\prod_{i=1}^{r}s_{\phi^{(i,\sigma_0)}(N)}\left(u_i, u_{n+i}\ldots,u_{qn+i},1,\ldots,1\right)\prod_{i=r+1}^{n}s_{\phi^{(i,\sigma_0)}(N)}\left(u_i,u_{n+i}\ldots,u_{(q-1)n+i},1,\ldots,1\right)}{\prod_{i=1}^{n}s_{\phi^{(i,\sigma_0)}(N)}(1,\ldots,1)}\right]\\
S_2&=&\lim_{N\rightarrow\infty}\frac{1}{N}\log\left[\prod_{i<j,x_{\sigma_0(i),N}\neq x_{\sigma_0(j),N}}\frac{x_{\sigma_0(i),N}-x_{\sigma_0(j),N}}{w_{\sigma_0(i),N}-w_{\sigma_0(j),N}}\right]
\end{eqnarray*}
Note that for each $1\leq i\leq n$, $\phi^{(i,\si_0)}(N)\in\GT^+_{\frac{N}{n}}$.
By Theorem 4.2 of \cite{bg}, for each $1\leq i\leq r$, we obtain
\begin{eqnarray*}
\lim_{N\rightarrow\infty}\frac{1}{N}\log \frac{s_{\phi^{(i,\sigma_0)}(N)}\left(u_i,u_{n+i}\ldots,u_{qn+i},1,\ldots,1\right)}{s_{\phi^{(i,\sigma_0)}(N)}(1,\ldots,1)}=\frac{1}{n}\left[\sum_{t=0}^{q}H_{\mathbf{m}_i}\left(u_{i+tn}\right)\right]
\end{eqnarray*}
For each $r+1\leq i\leq n$, we obtain
\begin{eqnarray*}
\lim_{N\rightarrow\infty}\frac{1}{N}\log \frac{s_{\phi^{(i,\sigma_0)}(N)}\left(u_i,u_{n+i}\ldots,u_{(q-1)n+i},1,\ldots,1\right)}{s_{\phi^{(i,\sigma_0)}(N)}(1,\ldots,1)}=\frac{1}{n}\left[\sum_{t=0}^{q-1}H_{\mathbf{m}_i}\left(u_{i+tn}\right)\right]
\end{eqnarray*}
Therefore we have
\begin{eqnarray*}
S_1=\frac{1}{n}\left[\sum_{1\leq i\leq k, [i\mod n]\neq 0}H_{\mathbf{m}_{[i\mod n]}}\left(u_i\right)+\sum_{1\leq i\leq k, [i\mod n]= 0}H_{\mathbf{m}_n}\left(u_i\right)\right]
\end{eqnarray*}
Moreover,
\begin{eqnarray*}
S_2&=&\lim_{N\rightarrow\infty}\frac{1}{N}\log\left[\left(\prod_{i=1}^{r}\prod_{t=0}^{q}\prod_{j=i+1}^{n}\frac{x_{i,N}-x_{j,N}}{u_{i+tn}x_{i,N}-x_{j,N}}\right)\left(\prod_{i=r+1}^{n-1}\prod_{t=0}^{q-1}\prod_{j=i+1}^{n}\frac{x_{i,N}-x_{j,N}}{u_{i+tn}x_{i,N}-x_{j,N}}\right)\right]^{\frac{N}{n}}\\
&=&\lim_{N\rightarrow\infty}\frac{1}{n}\sum_{\{1\leq i\leq k,[i\mod n]\neq 0.\}}\sum_{\{j=[i\mod n]+1\}}^{n}\log\frac{x_{[i\mod n],N}-x_{j,N}}{u_{i}x_{[i\mod n],N}-x_{j,N}}\\
&=&-\frac{1}{n}\sum_{\{1\leq i\leq k,[i\mod n]\neq 0.\}}\sum_{\{j=[i\mod n]+1\}}^{n}\log\left(u_i\right)
\end{eqnarray*}
where the last identity is obtained from Assumption \ref{ap32}. Then the theorem follows.
$\hfill\Box$

\section{Periodic dimer model on contracting square-hexagon lattice with piecewise boundary conditions: limit of the moments of the counting measure}\label{s5}

In this section, we study the periodic dimer model on contracting square-hexagon lattice with edge-weight period $1\times n$ and piecewise boundary conditions by analyzing the Schur function at a general point using the formula in Theorem \ref{p421} and Corollary \ref{p437}. The main goal is to prove Theorem \ref{tm5}. The idea is to define a Schur generating function (see Definition \ref{df33}), such that the moments of the counting measure can be computed by the derivatives of the Schur generating function. The Schur generating function for the uniform perfect matchings on the hexagon lattice was defined and analyzed in \cite{bg16}; for the periodic perfect matchings on the square-hexagon lattice with periodic boundary conditions was defined and analyzed in \cite{BL17}. Here we consider the case that the edge weights are periodic and the boundary condition is piecewise.  We first recall a few lemmas proved in \cite{BL17}.

Recall that to the boundary row $\Omega=(\Omega_1<\cdots<\Omega_N)$ of a contracting
square-hexagon lattice is naturally associated a partition $\omega\in \GT_N^+$
of length $N$ by:
\begin{equation*}
  \omega=(\Omega_N-N,\dotsc,\Omega_1-1).
\end{equation*}

\begin{proposition}(\cite{BL17})\label{p16}Let $\mathcal{R}(\Omega,\check{a})$ be a contracting square-hexagon lattice, with edge weights assigned as in Assumption \ref{apew}. Then the partition function for perfect matchings on $\mathcal{R}(\Omega,\check{a})$ is given by
\begin{eqnarray*}
Z=\left[\prod_{i\in I_2}\Gamma_i\right] s_{\omega}(x_{1},\ldots,x_{N})
\end{eqnarray*}
where $\omega\in \GT_N$ describes the bottom boundary condition  of $\mathcal{R}(\Omega,\check{a})$, and for $i\in I_2$, $\Gamma_i$ is defined by
\begin{eqnarray}
\Gamma_i=\prod_{t=i+1}^{N}\left(1+y_{i}x_{t}\right).\label{gi}
\end{eqnarray}.
\end{proposition}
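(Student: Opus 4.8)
The plan is to read the formula off the bijection of Theorem \ref{myb} and then evaluate the resulting weighted sum over interlacing sequences of partitions by repeatedly applying the branching rule and the dual Pieri rule for Schur polynomials. By Theorem \ref{myb}, $\mathcal{M}(\Omega,\check a)$ is in bijection with the sequences $\mu^{(N)}\prec'\nu^{(N)}\succ\mu^{(N-1)}\prec'\cdots\prec'\nu^{(1)}\succ\mu^{(0)}$ with $\mu^{(N)}=\omega$ and $\mu^{(0)}=(0,\dots,0)$; using Construction \ref{ct} I would first check that under this bijection the Boltzmann weight $\prod_{e\in M}w_e$ factors over the transitions of the sequence, a horizontal-strip transition $\nu^{(j)}\succ\mu^{(j-1)}$ contributing $x^{\,|\nu^{(j)}|-|\mu^{(j-1)}|}$ for the relevant $x\in\{x_1,\dots,x_N\}$, a vertical-strip transition $\mu^{(j)}\prec'\nu^{(j)}$ (nontrivial exactly at the indices in $I_2$) contributing $y^{\,|\nu^{(j)}|-|\mu^{(j)}|}$ for the relevant $y\in\{y_1,\dots,y_N\}$, and all other edges contributing $1$. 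Thus $Z$ becomes $\sum_{\text{seq.}}\prod(\text{these monomials})$.

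Next I would carry out the sum over the partitions one block at a time, starting from $\mu^{(0)}=\emptyset$ (where $s_{\mu^{(0)}}=1$) and working down to $\mu^{(N)}=\omega$. The two identities that do all the work are the branching rule
\[
s_{\nu}(t_1,\dots,t_m,x)=\sum_{\rho\,:\,\rho\prec\nu}x^{\,|\nu|-|\rho|}s_{\rho}(t_1,\dots,t_m),
\]
which absorbs a horizontal-strip transition and introduces the next $x$-variable (iterating it is the Gelfand--Tsetlin expansion of $s_\omega$), and the dual Pieri rule in a single extra parameter
\[
\sum_{\nu\,:\,\mu\prec'\nu}u^{\,|\nu|-|\mu|}s_{\nu}(t_1,\dots,t_m)=\Bigl[\textstyle\prod_{l=1}^{m}(1+u\,t_l)\Bigr]s_{\mu}(t_1,\dots,t_m),
\]
which follows from $e_k\cdot s_\mu=\sum_{\nu/\mu\ \text{vertical }k\text{-strip}}s_\nu$ and $\sum_{k\ge 0}e_k(t)u^k=\prod_l(1+u\,t_l)$, and which absorbs a vertical-strip transition while emitting a product of linear factors in the current variables. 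Tracking which $x$-variables have already been introduced when the $I_2$-transition at index $i$ is reached, this transition emits exactly $\Gamma_i=\prod_{t=i+1}^{N}(1+y_i x_t)$; iterating over all $N$ blocks collapses the sum to $\bigl[\prod_{i\in I_2}\Gamma_i\bigr]s_\omega(x_1,\dots,x_N)$. (This is the argument of \cite{BL17}; one can equivalently encode the row-to-row transfer by the vertex operators $\Gamma_\pm$ of the boson--fermion correspondence and read off $Z$ as a vacuum expectation, but the route above is self-contained given Theorem \ref{myb}.)

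The main obstacle is precisely the bookkeeping in the telescoping step: one must pin down, at each stage, the ordered list of variables at which the Schur polynomial is evaluated, so that the dual Pieri rule applied at index $i\in I_2$ produces the product over $t=i+1,\dots,N$ (and not over a shifted range, nor over all of $1,\dots,N$), and so that the $x$-exponents accumulated across the horizontal-strip transitions reassemble $s_\omega(x_1,\dots,x_N)$ exactly. This requires matching the combinatorics of the interlacing pattern with the geometry of Definition \ref{dfr} — which rows are squares ($I_2$) versus hexagons ($I_1$), how the leftmost and rightmost vertices of consecutive rows nest, and the resulting lengths of the $\mu^{(j)}$ and $\nu^{(j)}$. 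Once the inductive statement carries the correct variable list, each individual step reduces to one application of one of the two displayed identities.
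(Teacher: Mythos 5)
The paper does not supply its own proof of this proposition; it simply cites \cite{BL17}, and your telescoping argument --- running the bijection of Theorem \ref{myb}, absorbing horizontal-strip transitions via the branching rule, and emitting products of linear factors at the co-interlacing $I_2$-steps via the dual Pieri rule --- is indeed the route taken there, so the approach is correct. However, the bookkeeping you yourself flag as the main obstacle is exactly where an off-by-one lurks, and I believe it changes the answer. Working the telescoping from the top ($\mu^{(0)}=\emptyset$) downward, the branching step for the horizontal strip $\nu^{(N-i+1)}\succ\mu^{(N-i)}$ already introduces the variable $x_i$ before one reaches the dual-Pieri step for the co-interlacing $\mu^{(N-i+1)}\prec'\nu^{(N-i+1)}$ (the step whose NE-SW edges carry weight $y_i$); since $\nu^{(N-i+1)}$ has length $N-i+1$, the Schur polynomial being summed over at that step sits in the $N-i+1$ variables $x_i,\dots,x_N$, and the dual Pieri rule therefore emits $\prod_{t=i}^{N}(1+y_ix_t)$, with the product starting at $t=i$, not $t=i+1$ as you (following the stated formula) wrote. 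You can check this on small cases: for $N=1$, $I_2=\{1\}$, $\Omega=(1)$, the lattice $\mathcal{R}(\Omega,\check a)$ is a four-cycle with $Z=1+x_1y_1$ while $s_\omega(x_1)=1$, so $\Gamma_1=1+y_1x_1$ rather than the empty product $\prod_{t=2}^{1}$; and for $N=2$, $I_2=\{1\}$, $\Omega=(1,2)$ a direct enumeration gives $Z=1+x_1y_1+x_2y_1+x_1x_2y_1^{2}=(1+y_1x_1)(1+y_1x_2)$, again matching $\prod_{t=1}^{2}(1+y_1x_t)$. So when you pin down the inductive statement, the variable list at the $i$th dual-Pieri step should be $x_i,\dots,x_N$, and the resulting prefactor is $\prod_{t=i}^{N}(1+y_ix_t)$.
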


\begin{definition}\label{df33}Let
\begin{eqnarray}
X=(x_1,x_2,\ldots,x_N)\in\RR^N;\label{xn}
\end{eqnarray}
and $(u_1,\ldots,u_N)\in\CC^N$. 
Let $\rho_N$ be a probability measure on $\GT_N$. Then the \emph{Schur generating function} with respect to $\rho_N$, $X$ is given by
\begin{eqnarray*}
S_{\rho_N,X}(u_1,\ldots,u_N)=\sum_{\lambda\in \GT_N}\rho_N(\lambda)\frac{s_{\lambda}(u_1,\ldots,u_N)}{s_{\lambda}(x_1,\ldots,x_N)}
\end{eqnarray*}
\end{definition}

For a positive integer $s$, write $\ol{s}=[s\mod n]$.

\begin{lemma}\label{lmm212} Suppose that Assumption \ref{ap423} holds.
Let 
\begin{eqnarray*}
X^{(N-t)}&=&(x_{\ol{t+1}},\ldots,x_{\ol{N}}),\\
Y^{(t)}&=&(x_{\ol{1}},\ldots,x_{\ol{t}}) 
\end{eqnarray*}
for each integer $t$ satisfying $0\leq t\leq N-1$. Let $k\in\{2t+1,2t+2\}$. Let $\omega$ be the partition corresponding to the configuration on the boundary row,  let $\rho^k$ be the probability measure on $\GT_{N-t}^+$ which is the distribution of partitions corresponding to the dimer configuration on the $k$th row of vertices of $\mathcal{R}(\Omega,\check{a})$, counting from the bottom. Then the Schur generating function, as defined in Definition \ref{df33}, can be computed by
\begin{enumerate}
\item If $t=2k+1$, then
\begin{eqnarray*}
S_{\rho^k,X^{(N-t)}}(u_1,\ldots,u_{N-t})&=&\frac{s_{\omega}\left(u_1,\ldots,u_{N-t},Y^{(t)}\right)}{s_{\omega}(X^{(N)})}\prod_{i\in\{1,\ldots,t\}\cap I_2}\prod_{j=1}^{N-t}\left(\frac{1+y_{\ol{i}}u_j}{1+y_{\ol{i}}x_{\ol{t+j}}}\right).
\end{eqnarray*}

\item If $t=2k+2$, then
\begin{eqnarray*}
S_{\rho^k,X^{(N-t)}}(u_1,\ldots,u_{N-t})&=&\frac{s_{\omega}\left(u_1,\ldots,u_{N-t},Y^{(t)}\right)}{s_{\omega}(X^{(N)})}\prod_{i\in\{1,\ldots,t+1\}\cap I_2}\prod_{j=1}^{N-t}\left(\frac{1+y_{\ol{i}}u_j}{1+y_{\ol{i}}x_{\ol{t+j}}}\right),
\end{eqnarray*}
for $k=2t+2,\ t=0,1,\ldots,N-1$.
\end{enumerate}
where $I_2$ is defined in Definition \ref{defI1I2}.
\end{lemma}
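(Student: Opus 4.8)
The plan is to combine Proposition~\ref{p16} with the branching and dual Pieri rules for Schur polynomials, following the approach of \cite{BL17}; the only extra ingredient here is the $n$-periodicity bookkeeping. By Theorem~\ref{myb}, a perfect matching of $\mathcal R(\Omega,\check{a})$ is a chain of (co)interlacing partitions $\omega=\mu^{(N)}\prec'\nu^{(N)}\succ\mu^{(N-1)}\prec'\cdots\succ\mu^{(0)}$ whose Boltzmann weight is the product, over all levels $m$, of $x_m$ (resp.\ $y_m$) raised to the size of the horizontal strip (resp.\ vertical strip) appearing at level $m$; the partition recorded on the $k$th row of vertices is $\mu^{(N-t)}$ when $k=2t+1$ and $\nu^{(N-t)}$ when $k=2t+2$, both lying in $\GT_{N-t}^{+}$. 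Cutting the chain at the $k$th row factorizes the Boltzmann weight, so $\rho^k(\lambda)=\tfrac1Z\,Z_\downarrow(\lambda)\,Z_\uparrow(\lambda)$, where $Z_\downarrow(\lambda)$ is the weighted sum over the part of the chain from $\omega$ to $\lambda$, $Z_\uparrow(\lambda)$ the weighted sum over the part from $\lambda$ to $\mu^{(0)}$, and $Z=\big[\prod_{i\in I_2}\Gamma_i\big]\,s_\omega(X^{(N)})$ by Proposition~\ref{p16}.

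The first step is to evaluate $Z_\uparrow(\lambda)$. When $k=2t+1$ the part of the lattice above the white row $k$ is again a contracting square-hexagon lattice, with bottom boundary $\lambda$ and, by $n$-periodicity, NE-SW edge weights $x_{\ol{t+1}},\dots,x_{\ol N}$; Proposition~\ref{p16} then gives $Z_\uparrow(\lambda)=c_\uparrow\,s_\lambda(X^{(N-t)})$ with $c_\uparrow$ independent of $\lambda$. When $k=2t+2$ one first peels off the horizontal-strip step $\mu^{(N-t-1)}\prec\nu^{(N-t)}=\lambda$ of weight $x_{t+1}$, applies Proposition~\ref{p16} to the contracting square-hexagon lattice above row $2t+3$, and then uses the branching rule $\sum_{\mu\prec\nu}z^{|\nu/\mu|}s_\mu(z_1,\dots,z_p)=s_\nu(z_1,\dots,z_p,z)$ together with $x_{t+1}=x_{\ol{t+1}}$ to conclude once more that $Z_\uparrow(\lambda)=c_\uparrow\,s_\lambda(X^{(N-t)})$. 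Substituting into Definition~\ref{df33} and cancelling the factor $s_\lambda(X^{(N-t)})$ against the denominator of the summand — the simplification that makes the Schur generating function tractable — gives
\[
S_{\rho^k,X^{(N-t)}}(u_1,\dots,u_{N-t})=\frac{c_\uparrow}{Z}\sum_{\lambda}Z_\downarrow(\lambda)\,s_\lambda(u_1,\dots,u_{N-t}).
\]

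The next step is to compute $\sum_\lambda Z_\downarrow(\lambda)\,s_\lambda(u_1,\dots,u_{N-t})$ by summing out the intermediate partitions of the chain one level at a time: for each horizontal-strip step one applies the branching rule above, which appends a new variable, and for each vertical-strip step, which occurs exactly at a square row $i\in I_2$, one applies the dual Pieri identity $\sum_{\nu\succ'\mu}y^{|\nu/\mu|}s_\nu(z_1,\dots,z_p)=\big(\prod_{l=1}^{p}(1+yz_l)\big)\,s_\mu(z_1,\dots,z_p)$. Over the $t$ levels separating $\omega$ from the $k$th row the appended variables are precisely $x_{\ol1},\dots,x_{\ol t}$, so by symmetry the Schur factor becomes $s_\omega(u_1,\dots,u_{N-t},Y^{(t)})$, while the dual Pieri step at a square row $i$ contributes $\prod_{j=1}^{N-t}(1+y_{\ol i}u_j)$ times a factor involving only the $x$'s. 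Hence $S_{\rho^k,X^{(N-t)}}(u)=C\,s_\omega(u_1,\dots,u_{N-t},Y^{(t)})\prod_i\prod_{j=1}^{N-t}(1+y_{\ol i}u_j)$, where $C$ does not depend on $(u_1,\dots,u_{N-t})$ and the product over $i$ runs over $\{1,\dots,t\}\cap I_2$ when $k=2t+1$ and over $\{1,\dots,t+1\}\cap I_2$ when $k=2t+2$ — the extra vertical step at level $t+1$ lying below the $k$th row in the even case and above it (hence absorbed into $c_\uparrow$) in the odd case.

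Finally, $C$ is pinned down by the normalization $S_{\rho^k,X^{(N-t)}}(X^{(N-t)})=\sum_\lambda\rho^k(\lambda)=1$: since the variable lists $(X^{(N-t)},Y^{(t)})$ and $X^{(N)}$ coincide up to a permutation by Assumption~\ref{ap423}, one obtains $C^{-1}=s_\omega(X^{(N)})\prod_i\prod_{j=1}^{N-t}(1+y_{\ol i}x_{\ol{t+j}})$, and substituting this back yields exactly the asserted formula. The step I expect to be the main obstacle is the peeling computation of the third paragraph: one must track precisely which variables are active at each dual Pieri step so that its contribution factors as $\prod_j(1+y_{\ol i}u_j)$ times a $u$-free constant, and must correctly attribute the level-$(t+1)$ vertical step to the portion above or below the $k$th row according to the parity of $k$.
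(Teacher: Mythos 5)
The paper itself does not give a proof of this lemma; it simply cites Lemma~3.17 of \cite{BL17}. Your argument reconstructs exactly the approach that proof would have to take: cut the chain of (co)interlacing partitions from Theorem~\ref{myb} at the $k$th row, identify $Z_\uparrow(\lambda)$ with $c_\uparrow\,s_\lambda(X^{(N-t)})$ via Proposition~\ref{p16} (peeling one horizontal step in the even case so that the remainder is again a contracting square-hexagon lattice), push the branching and dual Pieri rules through $Z_\downarrow$, and determine the surviving constant by the normalization $S_{\rho^k,X^{(N-t)}}(X^{(N-t)})=1$, using that $(X^{(N-t)},Y^{(t)})$ is a permutation of $X^{(N)}$. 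That normalization trick is a clean way to avoid tracking the $u$-free factors ($c_\uparrow$, the $\prod_{m=i}^{t}(1+y_{\ol i}x_{\ol m})$ pieces from the dual Pieri steps, and $\prod_{i\in I_2}\Gamma_i$ in $Z$) explicitly; one can check directly that they cancel, but your shortcut is both correct and simpler. The two bookkeeping points you yourself flag — that at the dual Pieri step for $y_{\ol i}$ the active variable list is $(u_1,\dots,u_{N-t},x_{\ol t},\dots,x_{\ol i})$, so the $u$-dependent part is exactly $\prod_{j=1}^{N-t}(1+y_{\ol i}u_j)$, and that the level-$(t+1)$ vertical step is below row $k$ for $k=2t+2$ but above it for $k=2t+1$ — are handled correctly, and they are precisely what produces the range $\{1,\dots,t\}\cap I_2$ versus $\{1,\dots,t+1\}\cap I_2$ in the two cases. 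The proof is correct and follows the same route as the cited source.
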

\begin{proof}See Lemma 3.17 of \cite{BL17}.
\end{proof}

Let 
\begin{eqnarray*}
 X^{(N-t)}_N&=&(x_{t+1,N},\ldots,x_{N,N})\\
 Y^{(t)}_N&=&(x_{1,N},\ldots,x_{t,N}).
\end{eqnarray*}

By Lemma~\ref{lmm212}, we have
\begin{eqnarray*}
&&S_{\rho^k,X_N^{(N-t)}}(u_1x_{t+1,N},\ldots,u_{N-t}x_{N,N})\\
&=&\frac{s_{\lambda(N)}(u_1x_{t+1,N},\ldots,u_{N-t}x_{N,N},x_{1,N},\ldots,x_{t,N})}{s_{\lambda(N)}(x_{1,N},\ldots,x_{N,N})}
\prod_{i\in\{1,2,\ldots,t/t+1\}\cap I_2}
\prod_{j=1}^{N-t}\left(\frac{1+y_iu_jx_{t+j,N}}{1+y_ix_{t+j,N}}\right).
\end{eqnarray*}
for $k=2t+1$ or $k=2t+2$. Let
\begin{eqnarray}
j&=&\left\{\begin{array}{cc}(i+t)\mod n&\mathrm{if}\ 1\leq [(i+t)\mod n]\leq n-1\\ n&\mathrm{if}\ [(i+t)\mod n]=0\end{array}\right..\label{j}
\end{eqnarray}
When the edge weights are assigned periodically and satisfy Assumptions \ref{ap32}, letting $N\to\infty$, $\frac{t}{N}\to \kappa\in[0,1)$, by Theorem \ref{tm2} we have
\begin{multline*}
\lim_{(1-\kappa)N\rightarrow\infty}
\frac{1}{(1-\kappa)N} \log S_{\rho^k,X_N^{(N-t)}}(u_1x_{t+1,N},\ldots,u_{\ell}x_{t+\ell,N},x_{t+1+\ell,N},\ldots,x_{N-t,N})\\
=\sum_{1\leq i\leq \ell}\left[Q_{j,\kappa}(u_i)\right],
\end{multline*}
%where $\kappa=\frac{t}{N}$, and $k=2t+1$ or $k=2t+2$. Let
where $j$ and $s$ are given by (\ref{j}), respectively; and
\begin{equation*}
Q_{j,\kappa}(u)=\left\{\begin{array}{cc}\frac{1}{1-\kappa}\left[
Q_{j}(u)+\frac{\kappa}{n}\sum_{r\in\{1,2,\ldots,n\}\cap I_2}\log\frac{1+y_rux_1}{1+y_rx_1}\right]&\mathrm{if}\ [j\mod n]=1\\\frac{Q_{j}(u)}{1-\kappa}
&\mathrm{otherwise}\end{array}\right.
\end{equation*}

Let $p\geq 1$ be a positive integer. Let $\rho_{\lfloor (1-\kappa )N\rfloor}:=\rho^{2(N-\lfloor (1-\kappa )N)\rfloor)+1}$ be a probability measure on $\GT_{\lfloor (1-\kappa )N\rfloor}^+$(Indeed, we will obtain exactly the same result in the limit as $N\rightarrow\infty$ if we define $\rho_{\lfloor (1-\kappa )N\rfloor}:=\rho^{2(N-\lfloor (1-\kappa )N)\rfloor)+2}$), and let $\mathbf{m}_{\rho_{\lfloor
(1-\kappa) N\rfloor}}$ be the corresponding random counting measure. Let $\mathcal{N}=\lfloor (1-\kappa) N \rfloor$. Let 
\begin{eqnarray*}
U&=&(u_1,\ldots,u_N); \qquad
X_N=(x_{1,N},\ldots,x_{N,N});\\
U_{X,N}&=&(u_1x_{1,N},\ldots,u_N x_{N,N}).\qquad
U_{X,N}^{(N-t)}=(u_1x_{t+1,N},\ldots,u_{N-t} x_{N,N}).
\end{eqnarray*}
Suppose that $X_N$ satisfies Assumption \ref{ap32}.

For a positive integer $p$, define an operator
\begin{eqnarray*}
\mathcal{D}_{p,N}=\frac{1}{\prod_{1\leq i<j\leq N}(u_ix_{i,N}-u_jx_{j,N})}\circ\left(\sum_{i=1}^{N}\left(u_i\frac{\partial}{\partial u_i}\right)^p\right)\circ\prod_{1\leq i<j\leq N}(u_ix_{i,N}-u_jx_{j,N})
\end{eqnarray*}
where $\circ$ denotes composition of operators; the left operator and the right operator above are multiplication operators, and the middle operator above is a differential operator.

Let $\lambda\in \GT_N^+$ be a length-$N$ partition. Explicit computations show that
\begin{eqnarray}
\mathcal{D}_{p,N} s_{\lambda}(U_{X,N})=\sum_{i=1}^{N}(\lambda_i+N-i)^{p} s_{\lambda}(U_{X,N})\label{ev}
\end{eqnarray}

 We write the Schur generating function as defined by Definition \ref{df33} as 
\begin{eqnarray*}
S_{\rho_{\lfloor (1-\kappa )N\rfloor},X_N^{(N-t)}}\left(U_{X,N}^{(N-t)}\right)=\mathrm{exp}\left(\sum_{i=1}^{N}\mathcal{N}Q_{j,\kappa}(u_i)\right)T_{N,\kappa}\left(U_{X,N}^{(N-t)}\right)
\end{eqnarray*}
Since ${S}_{\rho_{\lfloor (1-\kappa )N\rfloor},X_N^{(N-t)}}\left(X_N^{(N-t)}\right)=1$, the definition of $Q_{j,\kappa}$ implies that $Q_{j,\kappa}(1)=0$ for all $j\in[N]$ and $0<\kappa<1$. Therefore $T_{N,\kappa}\left(X_N^{(N-t)}\right)=1$ and
\begin{eqnarray*}
\lim_{N\rightarrow\infty}\frac{1}{N}\log\left[T_{N,\kappa}\left(u_1x_{t+1,N},\ldots,u_lx_{t+l,N},x_{t+l+1,N},\ldots,x_{N,N}\right)\right]=0.
\end{eqnarray*}
for any fixed positive integer $\ell$. Moreover, the convergence is uniform when $(u_1,\ldots,u_{\ell})$ is in an open complex neighborhood of $(1,\ldots,1)$. Therefore any partial derivative of $T_{N,\kappa}\left(u_1x_{t+1,N},\ldots,u_lx_{t+l,N},x_{t+l+1,N},\ldots,x_{N,N}\right)$ with respect to $(u_1,\ldots,u_{\ell})$, divided by $NT_{N,\kappa}\left(u_1x_{t+1,N},\ldots,u_lx_{t+l,N},x_{t+l+1,N},\ldots,x_{N,N}\right)$ tends to 0 uniformly when $(u_1,\ldots,u_l)$ is in a certain complex neighborhood of $(1,\ldots,1)$.

We write
\begin{eqnarray*}
&&\mathcal{D}_{p,\lfloor (1-\kappa)N\rfloor}\\
&=&\frac{1}{\prod_{1\leq i<j\leq N-t}(u_ix_{i+t,N}-u_jx_{j+t,N})}\circ\left(\sum_{i=1}^{N-t}\left(u_i\frac{\partial}{\partial u_i}\right)^p\right)\circ\prod_{1\leq i<j\leq N-t}(u_ix_{i+t,N}-u_jx_{j+t,N})
\end{eqnarray*}

 By (\ref{ev}) we have

\begin{eqnarray}
&&\EE\left(\int_{\RR}x^p\mathbf{m}_{\rho_{\lfloor(1-\kappa)N\rfloor}}(dx)\right)^m\label{emc}\\
&=&\frac{1}{\mathcal{N}^{m(p+1)}}\lim_{(u_1,\ldots,u_N)\rightarrow(1,\ldots,1)}\left(\mathcal{D}_{p,\lfloor (1-\kappa)N\rfloor}\right)^m {S}_{\rho_{\lfloor (1-\kappa )N\rfloor},X_N^{(N-t)}}\left(U_{X,N}^{(N-t)}\right)\notag
\end{eqnarray}

Using the Leibnitz rule to expand $\left(\mathcal{D}_{p,\lfloor (1-\kappa)N\rfloor}\right)^m{S}_{\rho_{\lfloor (1-\kappa )N\rfloor},X_N^{(N-t)}}\left(U_{X,N}^{(N-t)}\right)$, we obtain a linear combination of terms of the following form 
\begin{eqnarray}
&&(u_{g_1}\cdots u_{g_{\gamma}})\cdot\left(\frac{\frac{\partial}{\partial u_{t_1}}\cdots\frac{\partial}{\partial u_{t_{\tau}}}\prod_{1\leq i<j\leq N-t}(u_ix_{i+t,N}-u_jx_{j+t,N})}{\prod_{1\leq i<j\leq N-t}(u_ix_{i+t,N}-u_jx_{j+t,N})}\right)\label{ep}\\
&&\times\left(\frac{\partial }{\partial u_{a_1}}\cdots\frac{\partial}{\partial u_{a_{\alpha}}}\exp\left(\sum_{i=1}^{N-t}\mathcal{N} Q_{j,\kappa}(u_i)\right)\right)\cdot\left(\frac{\partial}{\partial u_{b_1}}\cdots\frac{\partial}{\partial u_{b_{\beta}}}T_{N,\kappa}\left(U_{X,N}^{(N-t)}\right)\right),\notag
\end{eqnarray}
where $\gamma\leq mp$ and $\alpha+\beta+\tau\leq mp$. 

Further expanding the second term in (\ref{ep}) we obtain a linear combination of terms with the following form
\begin{eqnarray}
&&(u_{g_1}\cdots u_{g_{\gamma}})(x_{t_1+t}\cdots x_{t_{\tau}+t})\cdot\left(\prod_{(a,b)\in P}\frac{1}{u_ax_{a+t}-u_bx_{b+t}}\right)\label{evr}\\
&&\times\left(\frac{\partial }{\partial u_{a_1}}\cdots\frac{\partial}{\partial u_{a_{\alpha}}}\exp\left(\sum_{i=1}^{N}\mathcal{N} Q_{j,\kappa}(u_i)\right)\right)\cdot\left(\frac{\partial}{\partial u_{b_1}}\cdots\frac{\partial}{\partial u_{b_{\beta}}}T_{N,\kappa}\left(U_{X,N}^{(N-t)}\right)\right),\notag
\end{eqnarray}
where $P\subset\{(a,b)|1\leq a<b\leq N-t\}$.

Note that in each derivation of the exponent in (\ref{evr}), a multiple of $N$ appears. Also recall that for any $\beta\geq 1$,
\begin{eqnarray*}
\frac{\partial}{\partial u_{b_1}}\cdots\frac{\partial}{\partial u_{b_{\beta}}}T_{N,\kappa}\left(U_{X,N}^{(N-t)}\right)=o(N).
\end{eqnarray*}
Therefore when $m=1$ and $N$ large, the leading term for
\begin{equation*}
\mathcal{N}^{p+1}\EE\int_{\RR}x^p\mathbf{m}_{\rho_{\lfloor(1-\kappa) N\rfloor}}(dx)
\end{equation*}
is the same as that of
\begin{eqnarray*}
&&\left(\frac{1}{\prod_{1\leq i<r\leq \mathcal{N}}(u_ix_{i+t,N}-u_rx_{r+t,N})}\right)\\
&&\times\left(\left.\sum_{a=1}^{\mathcal{N}}u_a^p\frac{\partial^p}{\partial u_a^p}\left[\mathrm{exp}\left(\mathcal{N}\left(\sum_{i=1}^{\mathcal{N}}Q_{j,\kappa}(u_i)\right)\right)\prod_{b<r}(u_bx_{b+t,N}-u_rx_{r+t,N})\right]\right)\right|_{(u_1,\ldots,u_N)=(1,\ldots,1)}.
\end{eqnarray*}
The latter has the same leading term as 
\begin{eqnarray*}
\mathcal{M}_{p,\mathcal{N}}&:=&\left.\sum_{\ell=0}^{p}\sum_{i=1}^{\mathcal{N}}\left(\begin{array}{c} p\\ \ell\end{array}\right)\mathcal{N}^{p-\ell}u_i^{p}\frac{\frac{\partial^{l}}{\partial u_i^l}\prod_{1\leq b<r\leq \mathcal{N} }(u_bx_{b+t,N}-u_rx_{r+t,N})}{\prod_{1\leq b<r\leq \mathcal{N}} (u_bx_{b+t,N}-u_rx_{r+t,N})}Q'_{j,\kappa}(u_i)^{p-\ell}\right|_{(u_1,\ldots,u_N)=(1,\ldots,1)}\\
&=&\left.\sum_{\ell=0}^{p}\sum_{i=1}^{\mathcal{N}}\left(\begin{array}{c} p\\ \ell\end{array}\right)\mathcal{N}^{p-\ell}u_i^{p}\right.\\
&&\times\left.\left[\sum_{1\leq j_1<\ldots<j_{\ell}\leq \mathcal{N}, j_{s}\neq i (1\leq s\leq \ell)}\frac{x_{i+t,N}^{\ell}}{\prod_{s=1}^{\ell}(u_ix_{i+t,N}-u_{j_{s}}x_{j_{s}+t,N})}\right]Q'_{j,\kappa}(u_i)^{p-\ell}\right|_{(u_1,\ldots,u_N)=(1,\ldots,1)}\\
&\approx&\left.\sum_{i=1}^{\mathcal{N}}u_i^p\left(\mathcal{N}Q'_{j,\kappa}(u_i)+\sum_{r\in\{1,2,\ldots,\mathcal{N}\},r\neq i}\frac{x_{j,N}}{u_ix_{j,N}-u_rx_{r+t,N}}\right)^p\right|_{(u_1,\ldots,u_{N})=(1,\ldots,1)}.
\end{eqnarray*}
where $A\approx B$ means that $A$ and $B$ have the same leading term as $\mathcal{N}\rightarrow\infty$.
For $i\in[n]$, let
\begin{equation*}
S_{\mathcal{N}}(i)=\{j\in[\mathcal{N}] \}: ((j-i)\mod n)=0\}=
\{an+i; 0\leq a \leq \lfloor \mathcal{N}/n\rfloor\}.
\end{equation*}
Then
\begin{eqnarray*}
\mathcal{M}_{p,\mathcal{N}}&\approx&\lim_{(u_1,\ldots,u_N)\rightarrow (1,\ldots,1)}\sum_{i=1}^{\mathcal{N}}\sum_{l=0}^{p}\mathcal{N}^{p-l}\left(\begin{array}{c}p\\l\end{array}\right)u_i^p[Q'_{j,\kappa}(u_i)]^{p-l}\left[\sum_{k=0}^{l}\left(\begin{array}{c}l\\k\end{array}\right)\right.\\
&&\left.\times\left(\sum_{r\in\{1,2,\ldots,\mathcal{N}\}\setminus S_{\mathcal{N}}(i)}\frac{x_{j,N}}{u_ix_{j,N}-u_rx_{r+t,N}}\right)^{l-k}\left(\sum_{r\in S_{\mathcal{N}}(i)\setminus\{i\}}\frac{x_{j,N}}{u_ix_{j,N}-u_rx_{r+t,N}}\right)^{k}\right]\\
&=&  \lim_{(u_1,\ldots,u_N)\rightarrow (1,\ldots,1)} \sum_{i=1}^{\mathcal{N}}\sum_{k=0}^{p} \sum_{l=k}^{p} \frac{p!}{k!(l-k)!(p-l)!}\\
&&\times\left[\sum_{j_1,\ldots,j_k\in S_{\mathcal{N}}(i)\setminus \{i\}}\frac{u_i^p\left[\mathcal{N}Q'_{j,\kappa}(u_i)\right]^{p-l}\left(\sum_{r\in\{1,2,\ldots,\mathcal{N}\}\setminus S_{\mathcal{N}}(i)}\frac{x_{j,N}}{u_ix_{j,N}-u_rx_{r+t,N}}\right)^{l-k}x_{j,N}^k}{\prod_{r=1}^k(u_ix_{j,N}-u_{j_r}x_{j_r+t,N})}\right]\\
 &=&\lim_{(u_1,\ldots,u_N)\rightarrow (1,\ldots,1)}\sum_{i=1}^{\mathcal{N}}\sum_{k=0}^{p}\frac{p!}{k!(p-k)!}\\
&&\times \left[\sum_{j_1,\ldots,j_k\in S_{\mathcal{N}}(i)\setminus \{i\}}\frac{u_i^p\left[\mathcal{N}Q'_{j,\kappa}(u_i)+\sum_{r\in\{1,2,\ldots,\mathcal{N}\}\setminus S_{\mathcal{N}}(i)}\frac{x_{j,N}}{u_ix_{j,N}-u_rx_{r+t,N}}\right]^{p-k}x_{j,N}^k}{\prod_{s=1}^k(u_ix_{j,N}-u_{j_s}x_{j_s+t,N})}\right]
\end{eqnarray*}

We then apply the following lemma slightly adapted from~\cite{bg} after a change
of variables, to compute the limit as $(u_1,\ldots,u_N)$ approaches $(1,\ldots,1)$:
\begin{lemma}[\cite{bg}, Lemma~5.5]
  \label{l81}Let $\xi\in \CC\setminus\{0\}$ be a nonzero complex number; and $n>0$ be a positive integer. Assume $g(z)$ is analytic in a neighborhood of $\xi$. Then
\begin{equation*}
  \lim_{\forall\ i, z_i\to\xi}\sum_{j=1}^n \frac{g(z_j)}{\prod_{i\neq
  j}(z_j-z_i)} = 
%\lim_{z_i\rightarrow\xi}\left(\frac{g(z_1)}{(z_1-z_2)(z_1-z_3)\cdots(z_1-z_n)}+\frac{g(z_2)}{(z_2-z_1)(z_2-z_3)\cdots(z_2-z_n)}\right.\\
%\left.+\ldots+\frac{g(z_n)}{(z_n-z_1)(z_n-z_2)\cdots(z_n-z_{n-1})}\right)=
  \left.\frac{\partial^{n-1}}{\partial z^{n-1}}\left(\frac{g(z)}{(n-1)!}\right)\right|_{z=\xi}.
\end{equation*}
\end{lemma}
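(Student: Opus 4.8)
The plan is to rewrite the left-hand side as a contour integral via residue calculus and then pass to the limit inside the integral. First I would fix $r>0$ small enough that $g$ is holomorphic on the closed disk $\overline{D}(\xi,r)$, and let $\gamma$ denote its positively oriented boundary circle. For the expression on the left to be meaningful we take $z_1,\dots,z_n$ pairwise distinct; once all of them lie in the open disk $D(\xi,r)$, the function $z\mapsto g(z)/\prod_{i=1}^{n}(z-z_i)$ is meromorphic on a neighborhood of $\overline{D}(\xi,r)$ with simple poles at exactly $z_1,\dots,z_n$, and its residue at $z_j$ equals $g(z_j)/\prod_{i\neq j}(z_j-z_i)$. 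Summing residues then gives
\[
\sum_{j=1}^{n}\frac{g(z_j)}{\prod_{i\neq j}(z_j-z_i)}=\frac{1}{2\pi\mathbf{i}}\oint_{\gamma}\frac{g(z)}{\prod_{i=1}^{n}(z-z_i)}\,dz .
\]

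Next I would let $z_i\to\xi$ for every $i$. Since $\gamma$ stays at distance $r$ from $\xi$, for $z\in\gamma$ we have $|z-z_i|\ge r-|z_i-\xi|\to r$, so $\prod_{i=1}^{n}(z-z_i)\to(z-\xi)^{n}$ uniformly on $\gamma$, and hence the integrand converges uniformly on $\gamma$ to $g(z)/(z-\xi)^{n}$. Interchanging the limit with the integral over the fixed contour $\gamma$, and then invoking the Cauchy integral formula for derivatives, yields
\[
\lim_{\forall\,i,\ z_i\to\xi}\ \sum_{j=1}^{n}\frac{g(z_j)}{\prod_{i\neq j}(z_j-z_i)}=\frac{1}{2\pi\mathbf{i}}\oint_{\gamma}\frac{g(z)}{(z-\xi)^{n}}\,dz=\frac{g^{(n-1)}(\xi)}{(n-1)!},
\]
which is exactly the right-hand side of the identity in the statement of the lemma.

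The only point requiring any care — and I do not expect it to be a genuine obstacle — is that the limit is an unrestricted multivariate one: the $z_i$ may approach $\xi$ along arbitrary paths, subject only to staying pairwise distinct so that the left-hand side is defined at all. The uniform bound $|z-z_i|\ge r-|z_i-\xi|$ on the fixed contour $\gamma$ is precisely what legitimizes the interchange of limit and integral irrespective of the mode of approach. An alternative, purely algebraic route is induction on $n$: abbreviating the left-hand side as the divided difference $g[z_1,\dots,z_n]$, one uses the recursion
\[
g[z_1,\dots,z_n]=\frac{g[z_1,\dots,z_{n-1}]-g[z_2,\dots,z_n]}{z_1-z_n}
\]
together with a Taylor expansion of $g$ about $\xi$; the contour argument above is however shorter and makes the required uniformity transparent. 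Since the statement is standard — it is Lemma~5.5 of \cite{bg} — in the present paper we simply invoke it in that form.
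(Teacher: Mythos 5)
Your proof is correct. The paper itself does not prove this statement; it cites it as Lemma~5.5 of \cite{bg} and uses it as a black box, so there is no paper proof to compare against. Your contour-integral argument is the standard one: rewrite the divided difference as $\frac{1}{2\pi\mathbf{i}}\oint_\gamma g(z)\prod_{i=1}^n(z-z_i)^{-1}\,dz$ by residues, use the lower bound $|z-z_i|\ge r-|z_i-\xi|$ on the fixed circle $\gamma$ to get uniform convergence of the integrand (which, as you note, is exactly what justifies exchanging the unrestricted multivariate limit with the integral), and conclude by Cauchy's formula for derivatives. One minor observation: the hypothesis $\xi\neq 0$ plays no role and your proof correctly makes no use of it — it appears to be an artifact of the specific application in \cite{bg}. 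The alternative you sketch via the divided-difference recursion $g[z_1,\dots,z_n]=\bigl(g[z_1,\dots,z_{n-1}]-g[z_2,\dots,z_n]\bigr)/(z_1-z_n)$ and Taylor expansion is also valid and more elementary, though, as you say, the contour version handles the mode of approach more transparently.
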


Given $x_{i\mod n}=x_i$, by Lemma~\ref{l81} and Assumption \ref{ap32} we have: 
\begin{eqnarray*}
&&\lim_{N\rightarrow\infty}\frac{\mathcal{M}_{p,\mathcal{N}}}{\mathcal{N}^{p+1}}=\lim_{N\rightarrow\infty}\lim_{(u_1,\ldots,u_N)\rightarrow
    (1,\ldots,1)}\frac{1}{n}\sum_{i=1}^{n}\sum_{k=0}^{p}\frac{p!}{k!(p-k)!}\frac{1}{\mathcal{N}^k(k)!}\\
&&\times\left.\frac{\partial^k\left[u_i^p\left(Q_{j,\kappa}'(u_i)+\frac{1}{n}\sum_{1\leq r\leq n,r\neq i}\frac{x_{j,N}}{u_ix_{j,N}-u_rx_{r+t,N}}\right)^{p-k}\right]}{\partial ^{k}u_i}\right|_{u_i=1}\\
&=&\left.\lim_{(u_1,\ldots,u_N)\rightarrow
(1,\ldots,1)}\frac{1}{n}\sum_{i=1}^{n}
\left[u_i^p\left(Q_{j,\kappa}'(u_i)+\frac{n-j}{n u_i}\right)^{p}\right]\right|_{u_i=1}
\end{eqnarray*}

Using residue we obtain
\begin{eqnarray*}
\EE\left[\int_{\RR}x^{p}\textbf{m}^{\kappa}(dx)\right]=
%=\sum_{i=1}^{n}\frac{1}{2(p+1)\pi \mathbf{i}}\oint_{x_{t+i}}\frac{dz}{z}\left(zQ_{\kappa}'(z)+\sum_{j=1}^{n}\frac{z}{n(z-x_{j})}\right)^{p+1}
\frac{1}{2(p+1)\pi \mathbf{i}}\sum_{i=1}^{n}\oint_{C_{1}}\frac{dz}{z}\left(zQ_{i,\kappa}'(z)+\frac{n-i}{n}+\frac{z}{n(z-1)}\right)^{p+1},
\end{eqnarray*}
where $C_{1}$ is a small counterclockwise contour enclosing $1$ and no other singularities of the integrand. 

Now we need to show that the moments of random measures $\textbf{m}^{\kappa}$ become deterministic as $N\rightarrow\infty$. It suffices to show that
\begin{eqnarray}
\lim_{N\rightarrow\infty}\EE\left[\left(\int_{\RR}x^{p}\textbf{m}^{\kappa}(dx)\right)^2\right]=\lim_{N\rightarrow\infty}\left[\EE\left(\int_{\RR}x^{p}\textbf{m}^{\kappa}(dx)\right)\right]^2\label{sv}
\end{eqnarray}
Let $2^{[\mathcal{N}]}$ be the collection of all the subsets of $[\mathcal{N}]$. For $1\leq i,i'\leq \mathcal{N}$, define sets $A_{i,i'},C_{i,i'}\in 2^{[\mathcal{N}]}\times 2^{[\mathcal{N}]}$ as follows. If $i\neq i'$,then
\begin{eqnarray*}
A_{i,i'}:&=&\left\{(M,M')\in 2^{[\mathcal{N}]}\times 2^{[\mathcal{N}]}:|M|=\ell,|M'|=\ell',i\notin M,i'\notin M'; \right.\\
&&\left.\mathrm{if}\ i\in M', \mathrm{then}\ i'\notin M;\mathrm{if}\ i'\in M, \mathrm{then}\ i\notin M'\right\}.
\end{eqnarray*}
If $i=i'$, then
\begin{eqnarray*}
A_{i,i}&=&\{(M,M')\in 2^{[\mathcal{N}]}\times 2^{[\mathcal{N}]}:|M|=\ell,|M'|=\ell',i\notin M,i\notin M'; M\cap M'=\emptyset\}.
\end{eqnarray*}
and
\begin{eqnarray*}
C_{i,i'}:&=&\{(M,M')\in 2^{[\mathcal{N}]}\times 2^{[\mathcal{N}]}:|M|=\ell,|M'|=\ell',i\notin M,i'\notin M'\}.
\end{eqnarray*}
Let
\begin{eqnarray*}
j'=\left\{\begin{array}{cc}(i'+t)\mod n&\mathrm{if}\ 1\leq [(i'+t)\mod n]\leq n-1\\ n&\mathrm{if}\ [(i'+t)\mod n]=0\end{array}\right..
\end{eqnarray*}
Note that the right hand side of (\ref{sv}) is 
\begin{eqnarray}
&&\left\{\sum_{\ell=0}^{p}\sum_{i=1}^{\mathcal{N}}\left(\begin{array}{c} p\\ \ell\end{array}\right)\mathcal{N}^{-\ell-1}u_i^{p}\times\left[\sum_{1\leq j_1<\ldots<j_{\ell}\leq \mathcal{N}, j_{s}\neq i (1\leq s\leq \ell)}\right.\right.\label{sv1}\\
&&\left.\left.\left.\frac{x_{i+t,N}^{\ell}}{\prod_{s=1}^{\ell}(u_ix_{i+t,N}-u_{j_{s}}x_{j_{s}+t,N})}\right]Q'_{j,\kappa}(u_i)^{p-\ell}\right|_{(u_1,\ldots,u_N)=(1,\ldots,1)}\right\}^2;\notag
\end{eqnarray}
Expanding (\ref{sv1}), we obtain
\begin{eqnarray}
&&\sum_{\ell=0}^{p}\sum_{\ell'=0}^{p}\sum_{i=1}^{\mathcal{N}}\sum_{i'=1}^{\mathcal{N}}\left(\begin{array}{c} p\\ \ell\end{array}\right)\left(\begin{array}{c} p\\ \ell'\end{array}\right)\mathcal{N}^{-\ell-\ell'-2}u_i^{p}u_{i'}^{p}\times\left[\sum_{C_{i,i'}}\right.\frac{x_{i+t,N}^{\ell}}{\prod_{r\in M}(u_ix_{i+t,N}-u_{r}x_{r+t,N})}\label{s1}\\
&&\left.\left.\frac{x_{i'+t,N}^{\ell'}}{\prod_{r'\in M'}(u_{i'}x_{i'+t,N}-u_{r'}x_{r'+t,N})}\right]Q'_{j,\kappa}(u_i)^{p-\ell}Q'_{j',\kappa}(u_{i'})^{p-\ell'}\right|_{(u_1,\ldots,u_N)=(1,\ldots,1)}.\notag
\end{eqnarray}
Let $m=2$ in (\ref{emc}), one can compute that the leading term on the left hand side of (\ref{sv}) is 
\begin{eqnarray}
&&\sum_{\ell=0}^{p}\sum_{\ell'=0}^{p}\sum_{i=1}^{\mathcal{N}}\sum_{i'=1}^{\mathcal{N}}\left(\begin{array}{c} p\\ \ell\end{array}\right)\left(\begin{array}{c} p\\ \ell'\end{array}\right)\mathcal{N}^{-\ell-\ell'-2}u_i^{p}u_{i'}^{p}\times\left[\sum_{A_{i,i'}}\right.\frac{x_{j,N}^{\ell}}{\prod_{r\in M}(u_ix_{j,N}-u_{r}x_{r+t,N})}\label{s2}\\
&&\left.\left.\frac{x_{j',N}^{\ell'}}{\prod_{r'\in M'}(u_{i'}x_{j',N}-u_{r'}x_{r'+t,N})}\right]Q'_{j,\kappa}(u_i)^{p-\ell}Q'_{j',\kappa}(u_{i'})^{p-\ell'}\right|_{(u_1,\ldots,u_N)=(1,\ldots,1)}\notag
\end{eqnarray}
It is not hard to see that
\begin{eqnarray*}
\sum_{i=1}^{\mathcal{N}}\sum_{i'=1}^{\mathcal{N}}\left|A_{i,i'}\triangle C_{i,i'}\right|=o(N^{l+l'+2})
\end{eqnarray*}
Therefore the leading term for (\ref{s1}) and (\ref{s2}) when $N$ is large are the same. Then (\ref{sv}) follows.

\section{Frozen Boundary}\label{s7}

In this section, we prove Theorem \ref{tm3}. This is obtained by analyzing the density of the limit measure obtained in Section \ref{s4}, and find explicitly the region where the density is 0 or 1. This turns out to be related to the real and complex roots of a sequence of equations. In the special case when $I_2\cap[n]=\emptyset$, for which the graph is a hexagonal lattice, or when $|I_2\cap[n]|=1$, we explicitly write down the equation of the frozen boundary, and show that the frozen boundary is a union of $n$ disjoint cloud curves, where $n$ is the size of a period. Similar approaches were used in \cite{bk} to study the frozen region of uniform perfect matchings on the square grid, and in \cite{BL17} to study the frozen region of periodic perfect matchings on the square-hexagon lattice with periodic boundary conditions. Here we shall study the periodic perfect matchings on the square-hexagon lattice with piecewise boundary conditions and prove the surprising result that when the edge weights satisfy certain conditions, the liquid region becomes disconnected.

For $i\in[n]$ and $\kappa\in(0,1)$, let  
\begin{eqnarray*}
F_{i,\kappa}(z)=zQ'_{i,\kappa}(z)+\frac{n-i}{n}+\frac{z}{n(z-1)}.
\end{eqnarray*}

We can compute the Stieltjes transform of the measure $\mathbf{m}^{\kappa}$ when $x$ is in a neighborhood of infinity, by Theorem \ref{tm5} we obtain
\begin{eqnarray*}
\mathrm{St}_{\mathbf{m}^{\kappa}}(x)&=&\sum_{j=0}^{\infty}\frac{\int_{\RR}y^j\mathbf{m}^{\kappa}(dy)}{x^{j+1}}=\sum_{i=1}^{n}\sum_{j=0}^{\infty}\frac{1}{2(j+1)\pi\mathbf{i}}\oint_{C_1}\left(\frac{F_{i,\kappa}(z)}{x}\right)^{j+1}\frac{dz}{z}\\
&=&-\frac{1}{2\pi\mathbf{i}}\sum_{i=1}^{n}\oint_{C_1}\log\left(1-\frac{F_{i,\kappa}(z)}{x}\right)\frac{dz}{z}
\end{eqnarray*}
Integration by parts we have
\begin{eqnarray*}
\mathrm{St}_{\mathbf{m}^{\kappa}}(x)=\frac{1}{2\pi\mathbf{i}}\left(\sum_{i=1}^{n}\left(\oint_{C_1}\log z\frac{\frac{d}{dz}\left(1-\frac{F_{i,\kappa}(z)}{x}\right)}{1-\frac{F_{i,\kappa}(z)}{x}}dz\right)-\oint_{C_{x_1,\ldots,x_n}} d\left(\log z\log \left(1-\frac{F_{i,\kappa}(z)}{x}\right)\right)\right)
\end{eqnarray*}

We claim that when $|x|$ is sufficiently large, $F_{i,\kappa}(z)=x$ has exactly one root in a neighborhood of $1$ for each $i\in[n]$. Indeed, $F_{i,\kappa}(z)$ has a Laurent series expansion in a neighborhood of $1$ given by
\begin{eqnarray*}
F_{i,\kappa}(z)=\frac{1}{n(z-1)}+\sum_{k=0}^{\infty}\alpha_k(z-1)^k.
\end{eqnarray*}
We can find a unique composite inverse Laurent series of $F_{i,\kappa}(z)$ given by
\begin{eqnarray*}
G_{i,\kappa}(w)=1+\sum_{k=1}^{\infty}\frac{\beta_k}{w^k},
\end{eqnarray*}
such that $F_{i,\kappa}(G_{i,\kappa}(w))=w$ when $w$ is in a neighborhood of infinity. Then
\begin{equation}
  \label{eq:rootFG}
z_i(x)=G_{i,\kappa}(x)
\end{equation}
is the unique root of $F_{i,\kappa}(z)=x$ in a neighborhood of $1$.

Since $1-\frac{F_{i,\kappa}(z)}{x}$ has exactly one zero $z_i(x)$ and one pole $1$ in a neighborhood of $1$ when $|x|$ is sufficiently large, we have
\begin{eqnarray*}
\oint_{C_1}d\left(\log z\log \left(1-\frac{F_{i,\kappa}(z)}{x}\right)\right)=0;
\end{eqnarray*}
and therefore
\begin{eqnarray}
\mathrm{St}_{\mathbf{m}^{\kappa}}(x)=\sum_{i=1}^{n}\log(z_i(x))\label{sjl}
\end{eqnarray}
when $x$ is in a neighborhood of infinity. By the complex analyticity of both sides of (\ref{sjl}), we infer that (\ref{sjl}) holds whenever $x$ is outside the support of $\mathbf{m}^{\kappa}$.

Recall that if a measure $\mu$ has a continuous density $f$ with respect to the Lebesgue measure, then 
\begin{eqnarray}
f(x)=-\lim_{\epsilon\rightarrow 0+}\frac{1}{\pi}\Im\left[\mathrm{St}_{\mu}(x+i\epsilon)\right];\label{dsf}
\end{eqnarray}
see Lemma 4.2 of \cite{bk}.

Let $(\chi,\kappa)$ be the continuous coordinates in the limit of rescaled square-hexagon lattice $\frac{1}{N}\mathcal{R}(\Omega,\check{a})$ as $N\rightarrow\infty$. The frozen region is the region consisting of all points $(\chi,\kappa)$ where the density of the counting measure $\mathbf{m}^{\kappa}$ is 0 or 1.

\begin{proposition} If the equation
\begin{eqnarray}
F_{i,\kappa}(z)=\frac{\chi}{1-\kappa}\label{fic}
\end{eqnarray}
only has real roots for all $i\in[n]$; then $(\chi,\kappa)$ is in the frozen region.
\end{proposition}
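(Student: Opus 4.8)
The plan is to read off the density of $\mathbf{m}^{\kappa}$ from the Stieltjes inversion formula (\ref{dsf}) applied to the explicit expression (\ref{sjl}) for $\mathrm{St}_{\mathbf{m}^{\kappa}}$. Write $x_0=\tfrac{\chi}{1-\kappa}$. Since (\ref{sjl}) gives $\mathrm{St}_{\mathbf{m}^{\kappa}}(x)=\sum_{i=1}^{n}\log z_i(x)$ with $z_i(x)=G_{i,\kappa}(x)$ the root of $F_{i,\kappa}(z)=x$ near $1$, and both sides are holomorphic in the upper half-plane, the identity persists there, so by (\ref{dsf})
\begin{eqnarray*}
f(x_0)=-\frac{1}{\pi}\lim_{\epsilon\to0+}\Im\,\mathrm{St}_{\mathbf{m}^{\kappa}}(x_0+i\epsilon)=-\frac{1}{\pi}\lim_{\epsilon\to0+}\sum_{i=1}^{n}\Im\log z_i(x_0+i\epsilon).
\end{eqnarray*}
I would then show this quantity is an integer. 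Granting that, since $\mathbf{m}^{\kappa}$ is a (weak, in the sense of moments) limit of counting measures of partitions, and the counting measure of a length-$M$ partition has atoms spaced by at least $\tfrac1M$, any interval $[a,b]$ carries mass at most $(b-a)+\tfrac1M$; hence $\mathbf{m}^{\kappa}$ has density in $[0,1]$, and an integer density must be $0$ or $1$, i.e.\ $(\chi,\kappa)$ lies in the frozen region.

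Fix $i\in[n]$. The branch $z_i$ is one sheet of the inverse of $F_{i,\kappa}$, pinned down near $\infty$ by the Laurent series $G_{i,\kappa}$; it extends continuously to $x_0$ along a path in the upper half-plane, and by continuity of $F_{i,\kappa}$ the boundary value $z_i(x_0):=\lim_{\epsilon\to0+}z_i(x_0+i\epsilon)$ satisfies $F_{i,\kappa}(z_i(x_0))=x_0=\tfrac{\chi}{1-\kappa}$. By the hypothesis that (\ref{fic}) has only real roots for this $i$, we get $z_i(x_0)\in\RR$. Also $z_i$ does not vanish along such a path, since $z_i(x)=0$ forces $x=F_{i,\kappa}(0)$, which is real (and finite, by the explicit form of $Q_{i,\kappa}$); hence $\log z_i$ can be continued continuously along the path, and $\Im\log z_i(x_0+i\epsilon)=\arg z_i(x_0+i\epsilon)\to\arg z_i(x_0)\in\pi\ZZ$ as $\epsilon\to0+$, the argument of a nonzero real number. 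Summing over $i$, $\lim_{\epsilon\to0+}\sum_{i}\Im\log z_i(x_0+i\epsilon)\in\pi\ZZ$, hence $f(x_0)\in\ZZ$, hence $f(x_0)\in\{0,1\}$ by the first paragraph. (Concretely, $f(x_0)=0$ when all $z_i(x_0)$ are positive — the low-density frozen phase — and $f(x_0)=1$ when some $z_i(x_0)$ is negative — the high-density frozen phase.) The finitely many $\kappa$-sections with $x_0\in\{F_{1,\kappa}(0),\dots,F_{n,\kappa}(0)\}$ are covered by observing that the liquid region, where the density lies strictly between $0$ and $1$, is open, so its complement, the frozen region, is closed and contains these boundary points.

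The main obstacle is the boundary regularity used in the second paragraph: that the distinguished sheet $z_i$ has a continuous limit at $x_0$ from the upper half-plane, that this limit is an honest \emph{root} of $F_{i,\kappa}(z)=\tfrac{\chi}{1-\kappa}$ rather than $\infty$ or a branch point, and that the branch of $\log$ is the one making (\ref{sjl}) and (\ref{dsf}) simultaneously valid. This is handled by noting that $F_{i,\kappa}$ is meromorphic near $1$ with real Laurent coefficients (so non-real roots occur in conjugate pairs, which is exactly why the ``only real roots'' hypothesis removes any branch cut of $\mathrm{St}_{\mathbf{m}^{\kappa}}$), that $z_i$ is one well-defined sheet of the inverse of the algebraic function $F_{i,\kappa}$, and that $\mathrm{St}_{\mathbf{m}^{\kappa}}$, being the Cauchy transform of a compactly supported probability measure, is a Nevanlinna function whose boundary values from the upper half-plane exist and are real precisely away from the absolutely continuous part of the support — which is what the hypothesis guarantees at $x_0$.
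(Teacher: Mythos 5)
Your proof is correct and follows the same route as the paper's terse argument: apply the Stieltjes inversion formula (\ref{dsf}) to the representation (\ref{sjl}), and observe that when all roots of (\ref{fic}) are real the boundary limit of each $\Im\log z_i$ lies in $\pi\ZZ$, forcing the density to be an integer and hence $0$ or $1$. You supply the supporting details the paper leaves implicit -- the $0\le f\le 1$ bound from atom spacing of partition counting measures, the boundary regularity of the distinguished branch $z_i$, and the treatment of the finitely many exceptional abscissae via closedness of the frozen region.
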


\begin{proof}The proposition follows directly from (\ref{sjl}), (\ref{dsf}) and the definition of the frozen region.
\end{proof}

\begin{proposition}\label{p72}For any $x>0$, and $i\in[n]$ the equation $F_{i,\kappa}(z)=x$ has at most one pair of complex conjugate roots.
\end{proposition}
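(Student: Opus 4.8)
The plan is to uniformize $H_{\bm_i}$ so as to turn $F_{i,\kappa}(z)=x$ into a polynomial equation, and then count its real roots by an interlacing argument.

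First I would introduce the auxiliary variable $\zeta$ determined by $\log z=S_{\bm_i}(\zeta)$, i.e.
\[
z=\Psi(\zeta):=\prod_{k=0}^{m-1}\frac{1-\beta_{i,k}\zeta}{1-\gamma_{i,k}\zeta},\qquad m:=d_{i+1}-d_i,
\]
where the $\beta_{i,k}<\gamma_{i,k}$ are the endpoints of the density-$1$ intervals of $\bm_i$ from Lemma \ref{cm} and Lemma \ref{l319}. Using the identity $zH'_{\bm_i}(z)=\frac1\zeta-\frac z{z-1}$ recorded right after Lemma \ref{l319}, the function $F_{i,\kappa}(z)$ becomes a rational function of $\zeta$, and for $i\ge 2$ a short computation shows that $F_{i,\kappa}(z)=x$ is equivalent, after clearing denominators, to
\[
U(\zeta):=(1-c\zeta)\prod_{k=0}^{m-1}(1-\beta_{i,k}\zeta)=(1-(c-\kappa)\zeta)\prod_{k=0}^{m-1}(1-\gamma_{i,k}\zeta)=:V(\zeta),
\]
with $c:=(1-\kappa)nx+\kappa(n-i+1)$; here $c>0$ and $c-\kappa=(1-\kappa)nx+\kappa(n-i)>0$ for every $x>0$, $\kappa\in(0,1)$, $i\in[n]$. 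The value $\zeta=0$ (the removable point $z=1$) is always a solution and is divided out; the remaining roots $\zeta$ are matched with the roots $z$ of $F_{i,\kappa}(z)=x$ through the change of variables $\zeta=\frac{z-1}{c(z-1)+\kappa}$, which is a Möbius map (determinant $\kappa\neq0$), hence a bijection between the two root sets. For $i=1$ there is the additional summand $\frac{\kappa}{(1-\kappa)n}\sum_{l\in[n]\cap I_2}\frac{y_l x_1 z}{1+y_l x_1 z}$ in $zQ'_{1,\kappa}(z)$; it contributes one further real linear factor to each side for each $l\in[n]\cap I_2$, and the argument below runs with the same structure (in particular it applies under the hypothesis $|I_2\cap[n]|\le1$ of Theorem \ref{tm3}, where there is at most one extra factor, whose root $-1/(y_lx_1)$ is negative).

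Since $\Psi$, $U$, $V$ have real coefficients and $\zeta\mapsto z$ above is a real Möbius map, it suffices to show $\Phi:=U-V$ (a real polynomial of degree $m+1$) has at most one pair of complex conjugate roots, i.e. at least $m-1$ real roots. By Lemma \ref{cm} the support endpoints satisfy $0<\beta_{i,0}<\gamma_{i,0}\le\beta_{i,1}<\cdots<\gamma_{i,m-1}$, so the $2m$ reciprocals $\{1/\beta_{i,k},1/\gamma_{i,k}\}$ strictly interlace, and $0<1/c<1/(c-\kappa)$. Hence the root set of $U$, namely $\{1/c\}\cup\{1/\beta_{i,k}\}_k$, and that of $V$, namely $\{1/(c-\kappa)\}\cup\{1/\gamma_{i,k}\}_k$, when merged in increasing order, alternate $\dots,U,V,U,V,\dots$ except at the single location where the pair $1/c<1/(c-\kappa)$ is inserted. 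Evaluating $\Phi$ at the $m+1$ roots of $V$ — where $\Phi$ equals $U$, whose sign at such a point is governed by the parity of the number of roots of $U$ lying to its right — the interlacing forces a sign change of $\Phi$ across all but at most one consecutive pair of $V$-roots; this gives at least $m-1$ real roots of $\Phi$, hence at most two non-real roots, i.e. at most one complex conjugate pair. Transporting this through the bijection yields the proposition.

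The main obstacle is the bookkeeping in the last step: the positions of the extra roots $1/c$, $1/(c-\kappa)$ among the reciprocals $1/\beta_{i,k}$, $1/\gamma_{i,k}$ move as $x$ varies, and one must check, in each configuration, that inserting this ordered pair into the alternating sequence destroys exactly one (never two or more) of the sign alternations of $\Phi$ on $\RR$; likewise, in the $i=1$ case one must verify that the extra linear factor with negative root does not create a second defect. A clean way to carry this out is to study the rational function $\zeta\mapsto U(\zeta)/V(\zeta)$ directly and count, gap by gap between consecutive poles, the real solutions of $U(\zeta)/V(\zeta)=1$, using that $1/c<1/(c-\kappa)$ makes the extra Möbius factor $\frac{1-c\zeta}{1-(c-\kappa)\zeta}$ monotone on each such gap; but however it is organized, this case analysis is the substantive part of the proof.
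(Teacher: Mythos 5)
The proposal is structurally the same as the paper's: both rewrite the system \eqref{fi} as a polynomial equation (the paper in the $z$- or $t_i$-variable, you in $\zeta=1/t_i$, related by a real M\"obius map) and then count real roots via interlacing of numerator and denominator zeros. For $i\ge 2$ (no $I_2$ contribution) your reduction is correct: $U=(1-c\zeta)\prod(1-\beta_{i,k}\zeta)$, $V=(1-(c-\kappa)\zeta)\prod(1-\gamma_{i,k}\zeta)$, the M\"obius bijection, and the interlacing of $\{1/\beta_{i,k}\}$ with $\{1/\gamma_{i,k}\}$ are all right; the final sign-counting bound you state (``a sign change of $\Phi$ across all but at most one consecutive pair of $V$-roots'') is not quite correct as phrased --- the ordered pair $1/c<1/(c-\kappa)$ can nest around one of the $1/\beta_{i,k}$ and turn two odd-count intervals into even-count ones --- but the spurious root $\zeta=0$ picks up the slack, so the final count of at least $m-1$ real roots of $\Phi$ still holds. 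That case analysis is essentially the content of the paper's Lemmas~\ref{l32} and \ref{l33}.

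The substantive gap is in the $i=1$ case when $I_2\cap[n]\neq\emptyset$. You claim the extra summands contribute ``one further real linear factor to each side for each $l\in[n]\cap I_2$,'' giving $U,V$ of degree $m+1+|I_2\cap[n]|$. This is wrong. Under the uniformization $z=\prod_{k}\frac{1-\beta_{1,k}\zeta}{1-\gamma_{1,k}\zeta}$, each term $\frac{1}{1+y_lx_1z}$ becomes
\[
\frac{\prod_k(1-\gamma_{1,k}\zeta)}{\prod_k(1-\gamma_{1,k}\zeta)+y_lx_1\prod_k(1-\beta_{1,k}\zeta)},
\]
whose denominator is a degree-$m$ polynomial in $\zeta$ (with $m=D_1+1$), not a linear factor, and it is not \emph{a priori} clear that its roots are real, let alone that they interlace correctly with the others. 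After clearing denominators the relevant polynomial has degree $m(m'+1)+1$, where $m'$ is the number of distinct values of $c_r=1/(y_rx_1)$, as recorded in the paper's Lemma~\ref{l34}; this exceeds your $m+1+|I_2\cap[n]|$ already for $m\ge 3$, $m'=1$. Establishing that all these higher-degree factors have simple real roots that interlace appropriately is precisely the content of the paper's Lemma~\ref{l32} (done in the $z$-variable, where the relevant functions $H_1(z;x,y)$ are sums of simple fractions and the monotonicity is visible) together with Remark~\ref{rem:interlaceH}; your proposal omits this step entirely, so the $i=1$ part of the proposition is not proved as written.
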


\begin{proof}For $2\leq i\leq n$, we can write down the equation $F_{i,\kappa}(z)=x$ explicitly as follows
\begin{eqnarray*}
\frac{zH_{\mathbf{m}_i}'(z)}{n}-\frac{\kappa(n-i)}{n}+\frac{(1-\kappa)z}{n(z-1)}=x(1-\kappa);
\end{eqnarray*}
and we have
\begin{eqnarray*}
\frac{zH_{\mathbf{m}_1}'(z)}{n}+\frac{\kappa z}{n}\sum_{l\in\left([n]\cap I_2\right)}\frac{y_lx_1}{1+y_lx_1 z}-\frac{\kappa(n-1)}{n}+\frac{(1-\kappa)z}{n(z-1)}=x(1-\kappa).
\end{eqnarray*}
where $\mathbf{m}_i$ is a probability measure on an interval  which is divided into finitely many sub-intervals, and each sub-interval has probability density $0$ or $1$. For $i\in[n]$
\begin{eqnarray*}
H_{\mathbf{m}_i}'(z)=\frac{\mathrm{St}_{\mathbf{m}_i}^{(-1)}(\log(z))}{z}-\frac{1}{z-1}.
\end{eqnarray*}
By introducing additional variables $t_i$ such that $\mathrm{St}_{\mathbf{m}_i}(t_i)=\log (z)$ for $1\leq i\leq n$, one can then write for $\kappa\in(0,1)$ , $2\leq i\leq n$:
\begin{equation*}
F_{i,\kappa}(z,t_i):=
\frac{z}{(1-\kappa)n}\left(\frac{t_i}{z}-\frac{1}{z-1}-\frac{n-i}{z}\right)+\frac{z}{n(z-1)}+\frac{n-i}{n}.
\end{equation*}
and 
\begin{equation*}
F_{1,\kappa}(z,t_1):=
\frac{z}{(1-\kappa)n}\left(\frac{t_1}{z}-\frac{1}{z-1}-\frac{n-1}{z}+\kappa\sum_{r\in I_2\cap \{1,2,\ldots,n\}}\frac{y_{r}x_1}{1+y_{r}x_1z}\right)+\frac{z}{n(z-1)}+\frac{n-1}{n}.
\end{equation*}

As a consequence, injecting the expression of the moments of the limiting
measure into the definition of the Stieltjes transform, one gets implicit
equations to be solved:
for any $x\in\mathbb{C}$, finding $(z,t_i)\in
(\mathbb{C}\setminus\mathbb{R}_-)\times
\mathbb{C}\setminus\operatorname{Support}(\bm_i)$ such that

\begin{equation}
  \begin{cases}
    F_{i,\kappa}(z,t_i)=x,\\
    \mathrm{St}_{\mathbf{m}_i}(t_i)=\log(z)
  \end{cases}.
  \label{fi}
\end{equation}

Let $x\mapsto z_i^{\kappa}(x)$ be the composite inverse of $u:z\mapsto
F_{i,\kappa}\left(z,\mathrm{St}^{(-1)}_{\mathbf{m}_i}(\log z)\right)$ as given by (\ref{eq:rootFG}). Note
that $z_i^{\kappa}(x)$ is a uniformly convergent Laurent series in $x$ when $x$ is
in a neighborhood of infinity, and
\begin{eqnarray*}
z_i^{\kappa}\left(F_{i,\kappa}\left(z,\mathrm{St}^{(-1)}_{\mathbf{m}_i}(\log
z)\right)\right)=z;
\end{eqnarray*}
See Section 4.1 of~\cite{bk}.

%\begin{lemma}\label{l31}
 By (\ref{sjl}) The following identity holds when $x$ is in a neighborhood of infinity
\begin{equation*}
\mathrm{St}_{\mathbf{m}^{\kappa}}(x)=\sum_{i=1}^n\log(z_i^{\kappa}(x)),
\end{equation*}

%Let $F_{\kappa}(z,t)$ be given as in (\ref{fk}). From the equation $F_{\kappa}(z,t)=x$ of (\ref{es}), we have

The first equation of the system~\eqref{fi} is
linear in $t$ for given $x$ and $z$, which gives
with $c_r=\frac{1}{y_rx_1}$:
\begin{multline*}
t_1(z,\kappa,x)=
nx(1-\kappa)+\frac{\kappa z}{z-1}+\kappa(n-1)-\kappa z\sum_{r\in \left(I_2\cap[n]\right)}\frac{1}{z+c_r}\\
\label{tzkx}
\end{multline*}
and for $2\leq i\leq n$
\begin{equation*}
t_i(z,\kappa,x)=
nx(1-\kappa)+\kappa(n-i)+\frac{\kappa z}{z-1}.
  \label{eq:defr}
\end{equation*}
For a given value $y\in\mathbb{R}$, and fixed $x$ (and $\kappa$), we investigate
properties of the complex numbers $z$ such that $t(z,\kappa,x)=y$. In
particular, we have the following:
\begin{lemma}
  \label{l32}
  Let $c_r>0$, for $r\in I_2\cap[n]$. Let $\kappa\in (0,1)$, and
  $x,y\in\RR$.
  % Then all the roots for the the following equation in $z$
  Then 
  \begin{itemize}
\item  the the following equation in $z$
\begin{equation}
  t_1(z,\kappa,x)=y
%x(1-\kappa)+\frac{\kappa z}{z-1}-\frac{\kappa z}{n}\sum_{i\in I_2\cap\{1,\ldots,n\}}\frac{1}{c_i+z}-y=0
  \label{fe}
\end{equation}
has $m+1$ roots on the Riemann sphere $\mathbb{C}\cup\{\infty\}$, where $m$ is
the number of distinct values of $c_r$,
and all these roots are real and simple. 
\item For $2\leq i\leq n$, the equation $ t_i(z,\kappa,x)=y$ has exactly one root on the Riemann sphere $\mathbb{C}\cup\{\infty\}$.
\end{itemize}
%\cb{Add possibly now a remark about interlacement of solutions when $y_1<y_2$}
\end{lemma}

\begin{proof}The lemma follows from the same arguments as proof of Lemma 4.1 in \cite{BL17}.
 \end{proof}
 
 Let $0<\gamma_1<\cdots <\gamma_m$ be all the possible distinct values for the $c_i$, and
  $n_1,\dotsc,n_m$ be their respective multiplicities among the $c_i$'s. Let 
  \begin{eqnarray*}
  \ell=|I_2\cap[n]|.
  \end{eqnarray*}
  For $2\leq i\leq n$, let
  \begin{eqnarray*}
  K_i=nx(1-\kappa)+\kappa(n-i+1)-y
  \end{eqnarray*}
  and let
  \begin{eqnarray*}
   K_1=nx(1-\kappa)+\kappa(n-l)-y
  \end{eqnarray*}
  Define
  \begin{equation}
  H_1(z;x,y) = t(z,\kappa,x)-y =
    K_1+\left(
    \frac{\kappa}{z-1} + \kappa\sum_{j=1}^m \frac{n_j \gamma_j}{z+\gamma_j}
     \right)\label{h1}
  \end{equation}
  and for $2\leq i\leq n$, define
  \begin{equation}
    H_i(z;x,y) = t(z,\kappa,x)-y =
    K_i+\frac{\kappa}{z-1}    
    \label{eq:defHzxy}
  \end{equation}

\begin{remark}
  \label{rem:interlaceH}
  Let
  \begin{eqnarray}
  L_1=nx(1-\kappa)+\kappa(n-l).\label{l1}
  \end{eqnarray}
  For $2\leq i\leq n$, let 
   \begin{eqnarray}
  L_i=nx(1-\kappa)+\kappa(n-i+1).\label{l2}
  \end{eqnarray}
  
  Increasing the value of $y$ translates downward the graph of the function
  $z\in\mathbb{R}\mapsto H_1(z;x,y)$. Since $H_1(z;x,y)$ is decreasing with respect to $z$ in any
  interval of definition, the roots present in the bounded intervals decrease.
  The one in $(-\infty,-\gamma_m)\cup(1,+\infty)\cup\{\infty\}$ moves also to
  the left, and if it started in $\mathbb{R}_-$, when it reaches $-\infty$, it
  jumps to the right part of $(1,+\infty)$ and then continues to decrease.
  In particular, it means that if $y<y'$, the respective roots
  $z_1<\cdots<z_{m+1}$ and $z'_1 < \cdots < z'_{m+1}$ are interlaced:
  \begin{itemize}
    \item if $y<y'<L_1$,
      \begin{equation*}
	z'_1<z_1 < -\gamma_m < z'_2< z_2 <-\gamma_{m-1} <\cdots < -\gamma_1
	<z'_{m+1} < z_{m+1} < 1,
      \end{equation*}
    \item if $y<L_1<y'$,
      \begin{equation*}
	z_1 < -\gamma_m < z'_1< z_2 <-\gamma_{m-1} <\cdots < -\gamma_1
	<z'_m < z_{m+1} < 1 < z'_{m+1},
      \end{equation*}
    \item if $L_1<y<y'$,
      \begin{equation*}
	-\gamma_m < z'_1<z_1 < -\gamma_{m-1} < z'_2< z_2 <-\gamma_{m-1} <\cdots
	< 1 <z'_{m+1}< z_{m+1},
      \end{equation*}
  \end{itemize}
  The limiting case when $y$ or $y'$ is equal to $x(1-\kappa)+\frac{\kappa
  r}{n}$ is obtained by sending the corresponding root in
  $(-\infty,-\gamma_m)\cup(1,+\infty)$ to $\infty$.
  
  For $2\leq i\leq n$, we have
  \begin{itemize}
   \item if $y<y'<L_i$, $z'_1<z_1  < 1$.
 \item if $y<L_i<y'$, $z_1<1<z_1'$.
     \item if $L_i<y<y'$, $1<z_1'<z_1$.
     \end{itemize}
  \end{remark}

Rational fractions where zeros of the numerator and denominator interlace have
interesting monotonicity properties, already used for example in~\cite{OR07},
which are straightforwardly checked by induction using the 
decomposition of $R(z)$ into the sum of simple fractions:
\begin{lemma}
  \label{l33}
  \begin{enumerate}
  \item
  Let
  \begin{equation*}
    R(z)=\frac{(z-u_1)(z-u_2)\cdots(z-u_h)}{(z-v_1)(z-v_2)\cdots(z-v_h)},
  \end{equation*}
  where $\{u_i\}$ and $\{v_i\}$ are two sets of real numbers, and $h$ is a positive integer.
  \begin{itemize}
    \item If $\{u_i\}$ and $\{v_i\}$ satisfy
      \begin{equation*}
	v_1<u_1<v_2<u_2<\cdots<v_h<u_h.
      \end{equation*}
      Then $R(z)$ is monotone increasing in each one of the following intervals
      \begin{equation*}
	(-\infty,v_1), (v_1,v_2),\ldots, (v_{h-1},v_h),(v_h,\infty).
      \end{equation*}
    \item If $\{u_i\}$ and $\{v_i\}$ satisfy
      \begin{equation*}
	u_1<v_1<u_2<v_2<\cdots<u_h<v_h.
      \end{equation*}
      Then $R(z)$ is monotone decreasing in each one of the following intervals
      \begin{equation*}
	(-\infty,v_1), (v_1,v_2),\ldots, (v_{h-1},v_h),(v_h,\infty).
      \end{equation*}
  \end{itemize}
\item   Let
  \begin{equation*}
    R(z)=\frac{(z-u_1)\cdots(z-u_{h-1})}{(z-v_1)\cdots(z-v_h)}
    \text{with}\ v_1<u_1<\cdots<u_{h-1}<v_h
  \end{equation*}
  Then $R(z)$ is monotone decreasing in each one of the following intervals
  \begin{equation*}
	(-\infty,v_1), (v_1,v_2),\ldots, (v_{h-1},v_h),(v_h,\infty).
      \end{equation*}
\item Let  
  \begin{equation*}
    R(z)=\frac{(z-u_1)\cdots(z-u_{h+1})}{(z-v_1)\cdots(z-v_h)}
    \text{with}\ u_1<v_1<\cdots<v_{h}<u_{h+1}.
  \end{equation*}
   Then $R(z)$ is monotone increasing in each one of the following intervals
      \begin{equation*}
	(-\infty,v_1), (v_1,v_2),\ldots, (v_{h-1},v_h),(v_h,\infty).
      \end{equation*}
  \end{enumerate}
\end{lemma}
%\begin{lemma}\label{l34}Let $s$ be a positive integer and let $(a_1,a_2,\ldots,a_s)$ and $(b_1,b_2,\ldots,b_s)$ be two $s$-tuples of real numbers such that
%\begin{eqnarray*}
%a_1<b_1<a_2<b_2<\ldots<a_s<b_s\ \mathrm{and}\ \sum_{i=1}^{s}(b_i-a_i)=1.
%\end{eqnarray*}
%Let $\mathbf{m}_{\omega}$ be the uniform measure on the union of the intervals $[a_i,b_i]$. Then the system of equations (\ref{es}) has at most one pair of complex conjugate solutions. Moreover,
%\begin{itemize}
%\item if $x(1-\kappa)-b_i\neq 0$, for all $1\leq i\leq s$, then for each fixed $x\in \RR$, (\ref{es}) has at least $(n+1)s-1$ distinct real roots;
%\item if $x(1-\kappa)-b_i=0$ for some $i$ in $\{1,2,\ldots,s\}$, then for each fixed $x\in \RR$, (\ref{es}) has at least $(n+1)s-2$ distinct real roots.
%\end{itemize}
%\end{lemma}
%
%\cb{I think that in the statement it should be $(m+1)s-1$ (or $-2$), where $m$
%is the number of distinct $c_i$s. A short version of the proof is possible in
%the same line as Lemma~\ref{l32}.}

This is helpful to determine the number of solutions of Equation~\eqref{fic}, as
shown in the following lemma:
%\cb{I think that in the statement below it should be $(m+1)s-1$ (or $-2$), where $m$
%is the number of distinct $c_i$s instead of $n$. A short version of the proof is possible in
%the same line as Lemma~\ref{l32}. Corrected}
\begin{lemma}Let $d_i$, $\beta_{i,k}$, $\gamma_{i,k}$ be defined as in Lemma \ref{l319}. 
  \label{l34}Let
  \begin{eqnarray*}
  D_i=d_{i+1}-d_i-1&\mathrm{if}\ i\in[n]
  \end{eqnarray*}
  Then $\mathbf{m}_i$ is a measure with a density with respect to the
  Lebesgue measure equal to the indicator of a union of intervals
  $\bigcup_{r=0}^{D_i}[\beta_{i,k},\gamma_{i,k}]$, with

  %Let $s$ be a positive integer and let $(a_1,a_2,\ldots,a_s)$ and $(b_1,b_2,\ldots,b_s)$ be two $s$-tuples of real numbers such that
\begin{equation*}
\beta_{i,D_i}<\gamma_{i,D_i}<\beta_{i,D_i-1}<\gamma_{i,D_i-1}<\cdots<\beta_{i,0}<\gamma_{i,0}\quad \text{and}\quad \sum_{k=0}^{D_i}(\gamma_{i,k}-\beta_{i,k})=1;
\end{equation*}

Then the system of equations \eqref{fic} has at most one pair of complex
conjugate solutions. Moreover, let $m$ is the number of distinct $c_j$'s, for $j\in I_2\cap[n]$.
\begin{itemize}
  \item when $i=1$, if for any $0\leq k\leq D_1$, $\gamma_{1,k}\neq L_1$, 
    then for each fixed $x\in \RR$, \eqref{fic} has at least $(m+1)(D_1+1)-1$ distinct
    real roots;
  \item when $i=1$, if $\gamma_{1,k} = L_1$ for some $k\in \{0,1,\ldots,D_1\}$, then for each fixed $x\in \RR$, \eqref{fic} has at least
    $(m+1)(D_1+1)-2$ distinct real roots.
  \item when $2\leq i\leq n$, if for any $0\leq k\leq D_i$, $\gamma_{i,k}\neq L_i$, 
  then for each fixed $x\in \RR$, \eqref{fic} has at least $D_i$ distinct
    real roots;
 \item when $2\leq i\leq n$, if $\gamma_{i,k} = L_i$ for some $k\in \{0,1,\ldots,D_i\}$, 
  then for each fixed $x\in \RR$, \eqref{fic} has at least $(D_i-1)$ distinct
    real roots;
\end{itemize}

\end{lemma}

\begin{proof}
  %\cb{[Alternative short proof instead of the original one below]}
  The Stieltjes transform can be computed explicitly from the definition:
  \begin{equation}
    \mathrm{St}_{\bm_i}(t_i)=\log\prod_{k=0}^{D_i}\frac{t_i-\beta_{i,k}}{t_i-\gamma_{i,k}}.\label{stl}
  \end{equation}
  We use the second expression from \eqref{fi} and (\ref{stl}), we obtain
\begin{eqnarray*} 
z=\prod_{k=0}^{D_i}\frac{t_i-\beta_{i,k}}{t_i-\gamma_{i,k}} 
\end{eqnarray*}  
   By (\ref{h1}) and (\ref{eq:defHzxy}) get (after exponentiation)
  \begin{equation}
    z=\prod_{k=0}^{D_i} \frac{H_i(z;x,\beta_{i,k})}{H_i(z;x,\gamma_{i,k})}.
%G(z,x)=z;
    \label{gzx}
  \end{equation}
Let us suppose that 
\begin{eqnarray*}
\{\beta_{i,0},\ldots,\beta_{i,D_i},\gamma_{i,0},\ldots,\gamma_{i,D_i}\}\cap\left\{L_i\right\}=\emptyset;
\end{eqnarray*}
where $L_i$'s, for $1\leq i\leq n$, are defined by (\ref{l1}) and (\ref{l2}).

The rational fractions $\prod_{k=0}^{D_i} H_i(z;x,\beta_{i,k})$ and $\prod_{k=0}^{D_i} H_i(z;x,\gamma_{i,k})$ have the same
poles $m+1$ poles (of same order $s$) when $i=1$; and they have the same pole $1$ of order $s$ when $2\leq i\leq n$. According to Lemma~\ref{l32}  they have
$s(m+1)$ distinct real roots when $i=1$; and $s$ distinct real roots when $2\leq i\leq n$. These roots interlace.
Therefore, the ratio:
\begin{equation*}
  \prod_{r=0}^{D_i} \frac{H_i(z;x,\beta_{i,k})}{H_i(z;x,\gamma_{i,k})}
\end{equation*}
is a rational fraction of the form described in the hypotheses of
Lemma~\ref{l33}; with $h=m+1$ for $i=1$ and $h=1$ for $2\leq i\leq n$. Therefore, when $i=1$ on each bounded interval between two consecutive
poles, by monotonicity, the graph of the rational fraction will cross the first
diagonal exactly once and there are $(m+1)s-1$ such intervals.

If (no $\gamma_{1,k}$, and exactly) one $\beta_{1,k}$ is equal to $L_1$, the same argument is
applicable. The only difference is that the rational fraction on the right hand
side of Equation~\ref{gzx} has only $(s-1)(m+1)+m=s(m+1)-1$ zeros, but still
$s(m+1)$ poles. Therefore we still get the same number $s(m+1)-1$ of
intersection with the first diagonal, one on each finite interval between two
consecutive poles.

If (no $\beta_{1,k}$ and exactly) one $\gamma_{1,k}$ is equal to $L_1$, then this time the rational fraction has $s(m+1)-1$ finite real poles. Therefore,
there is only $s(m+1)-2$ roots found by this approach between two successive
poles.

Similar arguments apply when $2\leq i\leq n$.
\end{proof}

  Note that when rewriting Equation~\ref{gzx} as a polynomial equation in $z$,
  it has degree
  \begin{itemize} 
  \item When $i=1$,
  \begin{equation*}
    \begin{cases}
      s(m+1)+1 & \text{when no $b_r$ equals $L_1$},\\
      s(m+1) & \text{when a $b_r$ is equal to $L_1$}
    \end{cases}
  \end{equation*}
    \item When $2\leq i\leq n$,
  \begin{equation*}
    \begin{cases}
      s+1 & \text{when no $b_r$ equals $L_1$},\\
      s & \text{when a $b_r$ is equal to $L_1$}
    \end{cases}
  \end{equation*}
  \end{itemize}
  Indeed, in all the cases, the leading coefficients of the numerator and
  denominator of the rational fraction are distinct, thus there is no
  cancellation of the monomials of higher degree when multiplying both sides by
  the denominator. In both case, it is exactly the number of real roots we found
  plus 2. Which means that Equation~(\ref{gzx}), and thus~Equation~(\ref{fic}) has at
  most a pair of complex conjugated roots.
\end{proof}

\begin{proposition}For each $i\in[n]$, the boundary of the region such that (\ref{fic})
has only real roots and the region such that (\ref{fic}) has a pair of complex conjugate roots 
is a rational algebraic curve $C_i$ with an
  explicit parametrization $(\chi_i(t_i),\kappa_i(t_i))$ defined as follows:
  \begin{equation*}
    \chi_i(t_i)=\frac{1}{n}\left[t_i-\frac{J_i(t_i)}{J_i'(t_i)}\right],\quad
    \kappa_i(t_i)=\frac{1}{J_i'(t_i)},
  \end{equation*}
  where
 \begin{eqnarray}
 J_1(t_1)=\frac{1}{\Psi_1(t_1)-1}+(n-l)+\sum_{j=1}^{m}\frac{n_j\gamma_j}{\Psi_1(t_1)+\gamma_j};\label{j1t}
 \end{eqnarray}   
   for $2\leq i\leq n$,
\begin{eqnarray}  
J_i(t_i)=(n-i+1)+\frac{1}{\Psi_i(t_i)-1};\label{jit}
\end{eqnarray}  
and
  \begin{equation}
    \Psi_i(t_i)=\frac{(t_i-\beta_{i,0})(t_i-\beta_{i,1})\cdots(t_i-\beta_{i,D_i})}{(t_i-\gamma_{i,0})(t_i-\gamma_{i,1})\cdots(t_i-\gamma_{i,D_i})}.\label{psi}
  \end{equation}
  \end{proposition}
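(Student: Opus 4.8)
The plan is to follow the standard ``boundary of the liquid region as the discriminant locus of a family of algebraic equations'' strategy used in \cite{bk,BL17}, but adapted to the factored form of the Stieltjes transform obtained in \eqref{sjl}. The liquid region consists of those $(\chi,\kappa)$ for which at least one of the equations \eqref{fic} (equivalently, one of the systems \eqref{fi}) has a genuine pair of complex conjugate roots; by Lemma \ref{l34} each such equation has \emph{at most} one such pair, and the number of real roots it possesses is exactly two less than its degree. Hence the transition from ``all real'' to ``one complex conjugate pair'' for the $i$-th equation happens precisely where two real roots collide, i.e.\ at a point where \eqref{gzx} (rewritten as a polynomial $P_i(z)=0$) has a double root. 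So the curve $C_i$ is the image, under the map $(\text{double root data})\mapsto(\chi,\kappa)$, of the discriminant locus of $P_i$.

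The key computational step is to parametrize this discriminant locus. First I would rewrite the $i$-th equation of \eqref{fi} after eliminating $z$ via $z=\Psi_i(t_i)$ (which is exactly the exponential of \eqref{stl}), so that the single remaining unknown is $t_i$, and the equation $F_{i,\kappa}(z,t_i)=\frac{\chi}{1-\kappa}=:x$ becomes, using \eqref{h1}–\eqref{eq:defHzxy} and the definitions \eqref{l1}–\eqref{l2} of $L_i$, a relation of the shape
\begin{equation*}
  t_i = nx(1-\kappa) + \kappa\, J_i(t_i),
\end{equation*}
with $J_i$ as in \eqref{j1t}–\eqref{jit} (these are precisely the ``$H_i(z;x,y)=0$''-type expressions with the $x,y$-dependence stripped off, expressed through $\Psi_i$). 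A collision of two roots means the graphs of $t\mapsto t$ and $t\mapsto nx(1-\kappa)+\kappa J_i(t)$ are tangent, i.e.\ in addition to the equation above we must have
\begin{equation*}
  1 = \kappa\, J_i'(t_i).
\end{equation*}
Solving these two equations for $x$ and $\kappa$ in terms of the parameter $t_i$ gives $\kappa_i(t_i)=1/J_i'(t_i)$ immediately, and then $\chi_i(t_i)=(1-\kappa_i(t_i))\,x = \frac{1}{n}\big(t_i - \kappa_i(t_i) J_i(t_i)\big)\cdot\frac{1}{1-\kappa_i}\cdot(1-\kappa_i)$; after substituting $\kappa_i=1/J_i'$ this simplifies to $\chi_i(t_i)=\frac{1}{n}\big(t_i-\tfrac{J_i(t_i)}{J_i'(t_i)}\big)$, which is the claimed formula. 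Since $\Psi_i$, and hence $J_i$, is a rational function of $t_i$, the resulting curve is rational and algebraic; I would note that as $t_i$ ranges over $\RR$ minus the poles of $J_i$ one sweeps out the whole discriminant branch, and invoke the interlacing/monotonicity facts of Remark \ref{rem:interlaceH} and Lemma \ref{l33} to confirm that crossing $C_i$ is exactly where the count of real roots drops by two (so $C_i$ really is the boundary, not merely contained in it).

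The main obstacle I anticipate is bookkeeping rather than conceptual: verifying that the elimination of $z$ via $z=\Psi_i(t_i)$ is valid on the relevant domain (i.e.\ that $t_i\mapsto\Psi_i(t_i)$ inverts $\mathrm{St}_{\bm_i}$ on the component of interest, using $t_i\notin\operatorname{Support}(\bm_i)$), and carefully matching the degree count from Lemma \ref{l34} — including the degenerate cases where some $\gamma_{i,k}$ or $\beta_{i,k}$ equals $L_i$ — with the statement that a double real root is the only way a conjugate pair can be born. One must also check that the two tangency equations are independent (so that they cut out a curve, not a point or the whole plane), which follows from $J_i'$ not being identically the constant needed, i.e.\ from $\Psi_i$ being genuinely non-constant; this is where Assumption \ref{ap428} (ensuring $D_i\ge 0$ and the intervals are nondegenerate) is used.
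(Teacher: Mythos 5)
Your proposal is correct and follows essentially the same route as the paper's proof: eliminate $z$ via $z=\Psi_i(t_i)$, reduce the system \eqref{fi} to the single scalar relation $n\chi = t_i - \kappa J_i(t_i)$ with $J_i$ as in \eqref{j1t}--\eqref{jit}, impose the tangency condition $1=\kappa J_i'(t_i)$ to detect the birth of a complex conjugate pair, and solve the two equations for $(\chi,\kappa)$. The paper's proof presents exactly this computation (writing the pair of defining equations $\chi_i=\tfrac{t_i-\kappa J_i(t_i)}{n}$, $1=\kappa J_i'(t_i)$ and then substituting), so there is no substantive difference.
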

  
  \begin{proof}
  According to Proposition \ref{p72}, the boundary of the region such that (\ref{fic})
has only real roots and the region such that (\ref{fic}) has a pair of complex conjugate 
is given by the condition that 
  \begin{equation*}
   z
%=\frac{\prod_{i=1}^{s}H_{\chi}(a_i)}{\prod_{i=1}^{s}H_{\chi}(b_i)},
    =\prod_{k=0}^{D_i} \frac{H_i(z;\frac{\chi}{1-\kappa},\beta_{i,k})}{H_i(z;\frac{\chi}{1-\kappa},\gamma_{i,k})}
  \end{equation*}
 has double roots; where $H_i(z;x,y)$ is defined by Equation~\eqref{eq:defHzxy}.
  %\begin{eqnarray*}
  %  H_{\chi}(y)&=&[\chi-y](z-1)\prod_{i\in I_2\cap\{1,2,\ldots,n\}}(z+c_i)+\kappa z\prod_{i\in I_2\cap\{1,2,\ldots,n\}}(z+c_i)\\
  %  &&-\frac{\kappa z(z-1)}{n}\sum_{i\in I_2\cap\{1,2,\ldots,n\}}\prod_{\tiny{\begin{array}{c}j\in I_2\cap\{1,2,\ldots,n\}\\ j\neq i\end{array}}}(c_j+z).
  %\end{eqnarray*}
  We can also rewrite the system of equations \eqref{fi} as follows
  \begin{itemize}
  \item if $2\leq i\leq n$,
  \begin{equation}
      \begin{cases}
	\Psi_i(t_i)=z;\\
	%F_{\kappa}(z)=\frac{z}{1-\kappa}
	%\left(
	%\frac{t}{z}-\frac{1}{z-1}+
	%%\frac{\kappa}{n}\sum_{i\in I_2\cap\{1,2,\ldots,n\}}\frac{1}{c_i+z}
	%\frac{\kappa}{n}\sum_{j=1}^m \frac{n_j}{c_j+z}
	%\right)+\frac{z}{z-1}
	%=\frac{\chi}{1-\kappa}.
	n(1-\kappa)F_{i,\kappa}(z) =
	t_i-\kappa
\left[(n-i+1)+\frac{1}{z-1}\right]
 =n\chi.
      \end{cases}\label{pi}
    \end{equation}
    \item if $i=1$,
  \begin{equation}
      \begin{cases}
	\Psi_1(t_1)=z;\\
	%F_{\kappa}(z)=\frac{z}{1-\kappa}
	%\left(
	%\frac{t}{z}-\frac{1}{z-1}+
	%%\frac{\kappa}{n}\sum_{i\in I_2\cap\{1,2,\ldots,n\}}\frac{1}{c_i+z}
	%\frac{\kappa}{n}\sum_{j=1}^m \frac{n_j}{c_j+z}
	%\right)+\frac{z}{z-1}
	%=\frac{\chi}{1-\kappa}.
	n(1-\kappa)F_{i,\kappa}(z) =
	t_1-\kappa \left[ \frac{1}{z-1}+(n-l)+\sum_{j=1}^{m}\frac{n_j\gamma_j}{z+\gamma_j}\right]
=n\chi.
      \end{cases}\label{p1}
      \end{equation}
    \end{itemize}
 In each one of the two system of equations above,  we plug the expression of $z$ from the first equation into the second
    equation; and  for $1\leq i\leq n$, let $J_i(t_i)$ be defined as in (\ref{j1t}) and (\ref{jit}).
    Note that the condition that the resulting equation has a
    double root is equivalent to the following system of equations (where $1\leq i\leq n$)
    \begin{equation*}
      \begin{cases}
	\chi_i=\frac{t_i-\kappa J_i(t_i)}{n},\\
	1=\kappa J_i'(t_i).
      \end{cases}
    \end{equation*}
Then the parametrization of the
    curve separating the region with a pair of complex conjugate roots and the region with only real roots follows.
  \end{proof}

\begin{proposition}\label{p79}
  The curve $C_1$ (resp.\ $C_i$, for $2\leq i\leq n$) is a cloud curve of class $(m+1)(D_1+1)$ (resp.\ $(D_i+1)$), where $(D_i+1)$ is the
  number of segments in the measure $\bm_i$ for $1\leq i\leq n$, and $m$
  %is given by (\ref{dfm}).
  is the number of distinct values of $c_r=\frac{1}{y_rx_1}$ for $r\in |I_2\cap [n]|$ in one period.
  Moreover, the curve $C_i$ has the following properties
  \begin{enumerate}
  \item it is tangent to the line $\kappa =1$ with a unique tangent point for $i\in[n]$.
  \item it is tangent to the line $\kappa=0$ with $(m+1)(D_1+1)-1$ points of tangency when $i=1$, and with $D_i$ points of tangency when $2\leq i\leq n$.
 \end{enumerate}
  \end{proposition}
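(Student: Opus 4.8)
The plan is to verify, for each $i\in[n]$, the two defining conditions in Definition \ref{df43} for the dual curve of $C_i$ to be a cloud curve, i.e.\ to show that $C_i$ is the dual of a winding curve of the appropriate degree, and then read off the tangency properties from the explicit parametrization $(\chi_i(t_i),\kappa_i(t_i))$. First I would establish the degree count. The key point is the degree computation of the polynomial equation obtained by clearing denominators in $z=\prod_{k=0}^{D_i}H_i(z;x,\beta_{i,k})/H_i(z;x,\gamma_{i,k})$, which we already carried out in the proof of Lemma \ref{l34}: the polynomial has degree $(m+1)(D_1+1)+1$ when $i=1$ (resp.\ $(D_i+1)+1$ for $2\le i\le n$) in the generic case, and one less when a $\gamma$-endpoint coincides with $L_i$. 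Since a curve of class $d$ (degree of the dual) corresponds to a winding curve whose lines meet $C_i$ in $d$ points, and the number of real intersections found in Lemma \ref{l34} is exactly this degree minus $2$, the condition (1) of Definition \ref{df43} (at least $d-2$ real intersection points with every line) is met, while condition (2) with center taken to be the image of the point at infinity in the $z$-plane gives the remaining two. This identifies $C_i$ as a cloud curve of class $(m+1)(D_1+1)$ (resp.\ $D_i+1$).

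Next I would prove the tangency to $\kappa=1$. From the parametrization $\kappa_i(t_i)=1/J_i'(t_i)$, the curve meets the line $\kappa=1$ precisely at the parameter values $t_i$ with $J_i'(t_i)=1$. One computes $J_i'$ from the explicit rational expressions \eqref{j1t}–\eqref{jit} together with $\Psi_i$ from \eqref{psi}: since $\Psi_i(t_i)-1$ vanishes at the $D_i+1$ points where $\Psi_i=1$ and $J_i$ is (up to an additive constant) a sum of simple fractions in $\Psi_i$, a monotonicity/degree argument analogous to Lemma \ref{l33} shows $J_i'(t_i)=1$ has a unique real solution of the relevant type; this gives the single tangent point asserted in (1). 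For the tangency to $\kappa=0$, one must analyze the behavior as $\kappa_i(t_i)\to 0$, i.e.\ as $J_i'(t_i)\to\infty$, which happens exactly at the poles of $J_i'$, namely at the zeros of $\Psi_i(t_i)-1$ and (for $i=1$) at $t_1=\gamma_j$, $t_1=1$. Counting these poles, using the interlacing of $\{\beta_{i,k}\}$ and $\{\gamma_{i,k}\}$ and the structure of $\Psi_i$, yields $(m+1)(D_1+1)-1$ such points for $i=1$ and $D_i$ for $2\le i\le n$, each of which is a point of tangency of $C_i$ with $\kappa=0$ (the order of contact being even by the double-root characterization of the boundary curve from Proposition \ref{p72}); this is property (2).

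The main obstacle I anticipate is the bookkeeping in the $\kappa=0$ tangency count: one must carefully track how many poles of $J_i'$ are ``visible'' as genuine tangency points of the real locus of $C_i$ versus spurious ones, distinguish the cases where some $\gamma_{i,k}$ equals $L_i$ (which reduces the count by one, consistently with the degree drop in Lemma \ref{l34}), and confirm that each such pole indeed produces a branch of $C_i$ touching $\kappa=0$ rather than crossing it. This is exactly where the interlacing inequalities $\beta_{i,0}<\gamma_{i,0}<\cdots<\beta_{i,D_i}<\gamma_{i,D_i}$ from Lemma \ref{l34} and the monotonicity statements of Lemma \ref{l33} must be invoked in full; the rest of the argument is a relatively routine unwinding of the parametrization. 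Finally, disjointness of the $n$ curves — needed to conclude Theorem \ref{tm3} in the cases $|I_2\cap[n]|\in\{0,1\}$ — follows because under Assumption \ref{ap32} the supports of the measures $\bm_i$, hence the parameter intervals $[\beta_{i,0},\gamma_{i,D_i}]$, lie in well-separated scales as $\alpha\to\infty$, so the images $(\chi_i,\kappa_i)$ occupy disjoint horizontal bands.
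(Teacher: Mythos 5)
Your overall plan---work with the dual curve $C_i^\vee$, reduce the winding property to a root-counting argument in the spirit of Lemma~\ref{l34}, and read the tangencies off the parametrization $\kappa_i(t_i)=1/J_i'(t_i)$---is the same strategy the paper uses, but you skip the single step that makes all of it work, and you make a concrete computational error in the tangency count.

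The missing step is the explicit dual-curve parametrization. The paper applies the classical duality formula $(x^\vee,y^\vee)=\bigl(y'/(yx'-xy'),\ -x'/(yx'-xy')\bigr)$ to $(\chi_i(t_i),\kappa_i(t_i))$ and obtains $C_i^\vee=\{(-n/t_i,\,-J_i(t_i)/t_i)\}$. The class $(m+1)(D_1+1)$ (resp.\ $D_i+1$) is then read directly from the degree of $J_i$ as a rational map (a degree-$(m+1)$ function composed with the degree-$(D_i+1)$ map $\Psi_i$), and the winding condition reduces to counting real solutions of the new equation $cn-dt_i=J_i(t_i)$ for a line $y=cx+d$ meeting $C_i^\vee$. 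Instead, you try to import the degree count from Lemma~\ref{l34}, but that degree $(m+1)(D_1+1)+1$ belongs to the polynomial form of $z=G_i(z,x)$ in the variable $z$---it is not the degree of $C_i^\vee$, and the intersection count it produces is for a different equation. The paper explicitly points out that the Lemma~\ref{l34} argument is applied to the dual curve only after the ``roles of $s$ and $(m+1)$ exchanged,'' which you do not address; without deriving $cn-dt_i=J_i(t_i)$ there is nothing to which that argument applies.

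For the $\kappa=0$ tangency your characterization of the poles of $J_1'$ is wrong. You write that the poles occur ``at $t_1=\gamma_j$, $t_1=1$.'' In fact, from \eqref{j1t} $J_1$ has poles at the roots of $\Psi_1(t_1)=1$ (giving $D_1$ of them, since the leading coefficients of the numerator and denominator of $\Psi_1$ agree) and at the roots of $\Psi_1(t_1)=-\gamma_j$ for each of the $m$ distinct $\gamma_j$ (giving $m(D_1+1)$ of them), for a total of $D_1+m(D_1+1)=(m+1)(D_1+1)-1$. These are equations in $t_1$ whose roots are not the edge-weight parameters $\gamma_j$ or the value $1$; conflating the two means the count cannot be carried out. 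Finally, for the $\kappa=1$ tangency you propose to show $J_i'(t_i)=1$ has a unique solution, which is not obvious and is not what the paper does: the paper shows $(0,-1)\in C_i^\vee$ by computing $\lim_{t_i\to\infty}J_i(t_i)/t_i=1$, identifying the unique tangent point at $t_i=\infty$, which sidesteps any uniqueness argument for $J_i'(t_i)=1$. The disjointness remark at the end of your plan is not part of Proposition~\ref{p79}; it is addressed separately in Theorems~\ref{l71} and~\ref{l74}.
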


\begin{proof}
  We recall that the class of a curve is the degree of its dual curve. So we
  need to show that the dual curve $C_1^{\vee}$ (resp.\ $C_i^{\vee}$, for $2\leq i\leq n$) has degree $(m+1)(D_1+1)$ (resp.\ $(D_i+1)$) and is
  winding.

  We apply the classical formula to obtain from a parametrization $(x(t),
  y(t))$ of the curve $C_i$ for the frozen boundary 
  one for its dual $C_i^\vee$, 
    $(x^\vee(t),y^\vee(t))$:
    \begin{equation*}
      x^{\vee}=\frac{y'}{yx'-xy'},\quad
      y^{\vee}=-\frac{x'}{yx'-xy'}.
    \end{equation*}
    and obtain that 
    the dual curve $C_i^{\vee}$
  given in the following parametric form
  \begin{equation}
    C_i^{\vee}=\left\{\left(-\frac{n}{t_i},-\frac{J_i(t_i)}{t_i}\right)\ ;\
    t\in\mathbb{C}\cup\{\infty\}\right\}.
    \label{dual}
  \end{equation}
  from which we can read that its degree is $(m+1)(D_1+1)$ for $i=1$ and $(D_i+1)$ for $2\leq i\leq n$. To show that $C_i^{\vee}$ is
  winding, we need to look at real intersections with straight lines.

  %Recall that the class of a curve is defined to be the degree of its dual curve.
  %By Definition~\ref{df43}, it suffices to show that the dual curve
  %$C^{\vee}=(x,y)$ is winding.
  
  First, from Equation~\eqref{dual}, one sees that the first coordinate $x$ of
  the dual curve $C^\vee$ and the parameter $t$ are linked by the simple
  relation $xt_i=-1$.
  
  Using this relation to eliminate $t$ from the expression of the second
  coordinate, we obtain that the points $(x,t)$ on the dual curve satisfy the
  following implicit equation:
  \begin{equation*}
    y=\frac{x}{n} J_i\left(-\frac{n}{x}\right).
  \end{equation*}

  The points of intersection $(x(t_i),y(t_i))$ of the dual curve with a straight line
  of the form $y=cx+d$ have a parameter $t_i$ satisfying:
  \begin{equation}
    cn-dt_i=J_i(t_i).
    \label{eq:intersect_dual}
  \end{equation}
   the exact same argument as in Lemma~\ref{l34}
  (but with the role of $s$ and $(m+1)$ exchanged)
  shows that the~\eqref{eq:intersect_dual} has at least $(m+1)(D_1+1)-2$, if $i=1$, (resp.\ $D_i-1$, if $2\leq i\leq n$)
  distinct real solutions, yielding $(m+1)(D_1+1)-2$, when $i=1$, (resp.\ $D_i-1$, if $2\leq i\leq n$) points of intersections for the
  dual curve and the line $y=cx+d$.
  Moreover, if $t_0$ doesn't lie in a compact interval containing all the zeros of
  $J_i$, then any non vertical straight line passing through $(t_0,0)$ will have
  $(m+1)(D_1+1)-1$, when $i=1$, (resp.\ $D_i$, when $2\leq i\leq n$) intersections with the graph of $J_i$.  
  This means that $x_0$ in some
  closed interval, there are at least
  $(m+1)(D_1+1)-1$, when $i=1$, (resp.\ $D_i$, when $2\leq i\leq n$) real intersections of the dual curve with a line $y=cx+d$ passing
  through $(x_0,y)$, thus exactly $(m+1)(D_1+1)$, when $i=1$, (resp. $(m+1)(D_i+1)$, when $2\leq i\leq n$) real intersections, since there
  cannot be a single complex one.
  Such points $(x_0,y)$ are candidates to be the center of the dual curve.

  To consider the vertical lines $x=d$, we rewrite the equations in homogeneous
  coordinate $[x:y:z]$ and get that the line $x=dz$ intersects the curve at the
  point $[0:1:0]$ with multiplicity $(m+1)(D_1+1)-1$ when $i=1$ (resp.\ $D_i$ when $2\leq i\leq n$) so again, by the same argument
  as above, $(m+1)(D_1+1)$, when $i=1$, (resp.\ $D_i+1$, when $2\leq i\leq n$,) real intersections. The case of the line $z=0$ is similar.
  
Recall that each point on the dual curve $C_i^{\vee}$ corresponds to a tangent
line of $C_i$.
For $1\leq i\leq n$, let 
\begin{eqnarray*}
U_i&=&(t_i-\beta_{i,0})(t_i-\beta_{i,1})\cdots(t_i-\beta_{i,D_i})\\
V_i&=&(t_i-\gamma_{i,0})(t_i-\gamma_{i,1})\cdots(t_i-\gamma_{i,D_i}).
\end{eqnarray*}
 When $t_i=\infty$, we have
\begin{eqnarray*}
\lim_{t_i\rightarrow\infty}\frac{J_i(t_i)}{t_i}=\lim_{t_i\rightarrow\infty}\frac{V_i}{t_i(U_i-V_i)}.
\end{eqnarray*}
The leading term in $V_i$ is $t_i^{D_i+1}$, while the leading terms for $t_i(U_i-V_i)$ is $\left[\sum_{k=0}^{D_i}(\gamma_{i,k}-\beta_{i,k})\right]t_i^{D_i+1}$, therefore we have $\lim_{t_i\rightarrow\infty}\frac{V_i}{t_i(U_i-V_i)}=1$. Therefore we have $(0,-1)\in C_i^{\vee}$, which corresponds to the tangent line $\kappa=1$ of $C_i^{\vee}$. The unique tangent point is given by $\lim_{t_i\rightarrow\infty}\left(\chi_i(t_i),\kappa_i(t_i)\right)$.

Those $t_i$ such that $J_i(t_i)=\infty$ corresponds to tangent points with the tangent line $\kappa=0$. When $i=1$, the tangent points with the tangent line $\kappa=0$ are solutions of
\begin{eqnarray*}
\left[\Psi_1\left(-\frac{1}{x}\right)-1\right]\prod_{j=1}^{m}\left[\Psi_1\left(-\frac{1}{x}\right)-1\right]=0
\end{eqnarray*}
There are $(m+1)(D_1+1)-1$ such points. When $2\leq i\leq n$, the tangent points with the tangent line $\kappa=0$ are solutions of
\begin{eqnarray*}
\Psi_1\left(-\frac{1}{x}\right)=1
\end{eqnarray*}
and there are $D_i$ such points.
\end{proof}

\begin{lemma}\label{l70}
\begin{enumerate}
\item Assume $x_0>0$ is such that the equation (\ref{p1}) has a pair of complex conjugate roots. Let $s_1(x)$ be a real root of (\ref{p1}). Then
\begin{eqnarray*}
\left.\frac{\partial s_1(x)}{\partial x}\right|_{x=x_0}\geq 0.
\end{eqnarray*}
It is equal to 0 if and only if $s_i(x_0)=1$.
\item Let $i$ be a positive integer satisfying $2\leq i\leq n$. Assume $x_0>0$ is such that the equation (\ref{pi}) has a pair of complex conjugate roots. Let $s_i(x)$ be a real root of (\ref{pi}). Then
\begin{eqnarray*}
\left.\frac{\partial s_i(x)}{\partial x}\right|_{x=x_0}\geq 0.
\end{eqnarray*}
It is equal to 0 if and only if $s_i(x_0)=1$.
\end{enumerate}
\end{lemma}
\begin{proof}We only prove Part (1) here; Part (2) can be proved using exactly the same technique.

The derivative $s_1'(x)$ can be computed explicitly from (\ref{zg}) as follows
\begin{eqnarray*}
s_1'(x)=\frac{\frac{\partial G_1(z,x)}{\partial x}}{1-\frac{\partial G_1(z,x)}{\partial z}}
\end{eqnarray*}
First we claim that $\frac{\partial G_1(z,x)}{\partial x}\leq 0$. Note that 
\begin{eqnarray*}
G_1(z,x)=\frac{\prod_{k=0}^{D_1}\left[x-\left(\frac{\beta_{1,k}}{n(1-\kappa)}-\frac{\kappa z}{n(z-1)(1-\kappa)}-\frac{\kappa(n-1)}{n(1-\kappa)}-\frac{\kappa}{n(1-\kappa)}\sum_{r\in I_2\cap [n]}\frac{y_r x_1z}{1+y_rx_1z}\right)\right]}{\prod_{k=0}^{D_1}\left[x-\left(\frac{\gamma_{1,k}}{n(1-\kappa)}-\frac{\kappa z}{n(z-1)(1-\kappa)}-\frac{\kappa(n-1)}{n(1-\kappa)}-\frac{\kappa}{n(1-\kappa)}\sum_{r\in I_2\cap [n]}\frac{y_rx_1z}{1+y_rx_1z}\right)\right]}
\end{eqnarray*}
where
\begin{eqnarray*}
&&\frac{\beta_{1,k+1}}{n(1-\kappa)}-\frac{\kappa z}{n(z-1)(1-\kappa)}-\frac{\kappa(n-1)}{n(1-\kappa)}-\frac{\kappa}{n(1-\kappa)}\sum_{r\in I_2\cap [n]}\frac{y_r x_1z}{1+y_rx_1z}\\
&&<\frac{\gamma_{1,k+1}}{n(1-\kappa)}-\frac{\kappa z}{n(z-1)(1-\kappa)}-\frac{\kappa(n-1)}{n(1-\kappa)}-\frac{\kappa}{n(1-\kappa)}\sum_{r\in I_2\cap [n]}\frac{y_r x_1z}{1+y_rx_1z}\\
&&<\frac{\beta_{1,k}}{n(1-\kappa)}-\frac{\kappa z}{n(z-1)(1-\kappa)}-\frac{\kappa(n-1)}{n(1-\kappa)}-\frac{\kappa}{n(1-\kappa)}\sum_{r\in I_2\cap [n]}\frac{y_r x_1z}{1+y_rx_1z}.
\end{eqnarray*}
By Lemma \ref{l33}, for each fixed $z\in \RR\setminus\{1\}$, $G_i(z,x)$ is strictly decreasing in $x$ whenever it is defined. Hence $\frac{\partial G_i(z,x)}{\partial x}< 0$ whenever $z\neq 1$, and $\frac{\partial G_i(z,x)}{\partial x}= 0$ if $z=1$.

Now we show that $\frac{\partial G_i(z,x)}{\partial z}> 1$. We may also write $G_1(z,x)$ as follows:
\begin{eqnarray*}
G_1(z,x)=\frac{\prod_{k=0}^{D_1}(t_1(z,\kappa,x)-\beta_{1,k})}{\prod_{k=0}^{D_1}(t_1(z,\kappa,x)-\gamma_{1,k})}
\end{eqnarray*}
Let $m$ be the number of distinct values of $y_r$ for $r\in I_2\cap [n]$. By Lemma \ref{l32}, for each $k\in \{0,1,2,\ldots,D_i\}$, $t(z,\kappa,x)=\beta_{1,k}$ (resp.\ $t(z,\kappa,x)=\gamma_{1,k}$) has $(m+1)$ roots on the Riemann sphere $\CC\cup\{\infty\}$, and all these roots are real and simple. Hence we can write
\begin{eqnarray*}
G_1(z,x)=\frac{\prod_{k=0}^{D_1}\left(A_k\prod_{j=1}^{m+1}(z-u_{k,j})\right)}{\prod_{k=0}^{D_1}\left(B_k \prod_{j=1}^{m+1}(z-v_{k,j})\right)}
\end{eqnarray*}
where $\{u_{k,j}\}_{j=1}^{m+1}$ (resp.\ $\{v_{k,j}\}_{j=1}^{m+1}$) are roots of $t(z,\kappa,x)=\beta_{1,k}$ (resp.\ $t(z,\kappa,x)=\gamma_{i,k}$) satisfying
\begin{eqnarray*}
-\infty<u_{k,2}<\ldots<u_{k,m+1}<\infty\\
-\infty<v_{k,2}<\ldots<v_{k,m+1}<\infty
\end{eqnarray*}
and
\begin{eqnarray*}
u_{k,1}\in [-\infty,u_{k,2})\cup (u_{k,m+1},\infty]\\
v_{k,1}\in [-\infty,u_{k,2})\cup (v_{k,m+1},\infty]
\end{eqnarray*}

and
\begin{eqnarray*}
A_k=\left\{nx(1-\kappa)+\kappa(n-1)-\beta_{1,k}+(|I_2\cap[n]|+1)\kappa\right\}\prod_{r\in I_2\cap[n]}y_rx_1\\
B_k=\left\{nx(1-\kappa)+\kappa(n-1)-\gamma_{1,k}+(|I_2\cap[n]|+1)\kappa\right\}\prod_{r\in I_2\cap[n]}y_rx_1
\end{eqnarray*}

Recall that by (\ref{btik}) and (\ref{cmik}), we have
\begin{eqnarray}
\beta_{1,D_1}<\gamma_{1,D_1}<\beta_{1,D_1-1}<\gamma_{1,D_1-1}<\ldots\beta_{1,0}<\gamma_{1,0}\label{brit}
\end{eqnarray}

Recall that $c_r=\frac{1}{y_rx_1}$ and that $\phi_1<\phi_2<\ldots\phi_m$ are all the distinct numbers in $\{c_r\}_{r\in([n]\cap I_2)}$. The points 
\begin{eqnarray*}
-\phi_m<-\phi_{m-1}<\ldots<-\phi_1<1
\end{eqnarray*}
divided $\RR$ into $m$ bounded intervals and two unbounded intervals.

Remark \ref{rem:interlaceH} gives the interlacing properties of the roots  $\{u_{k,j},v_{k,j}\}_{k\in[D_1]\cup\{0\},j\in[m+1]}$ when (\ref{brit}) holds. More precisely, in the $j$th one (counting from the left) of the $m$ bounded intervals above, there exist $2(D_1+1)$ roots in $\{u_{k,j},v_{k,j}\}_{k\in[D_1]\cup\{0\},j\in[m+1]}$ given by
\begin{eqnarray*}
v_{0,j+1}<u_{0,j+1}<v_{1,j+1}<u_{1-1,j+1}<\ldots<v_{D_1,j+1}<u_{D_1,j+1}.
\end{eqnarray*}
Then $\{u_{k,j},v_{k,j}\}_{k\in[D_1]\cup\{0\},j\in[m+1]}$ divided $\RR$ into $(m+1)(D_1+1)-1$ bounded intervals and two unbounded intervals.

By Lemma \ref{l33}, for each fixed $x\in\RR$, in each one of the $(m+1)(D_1+1)+1$ intervals divided by $\{u_{k,j},v_{k,j}\}_{k\in[D_1]\cup\{0\},j\in[m+1]}$, $G_1(z,x)$ is strictly increasing in $z$. On each one of the $(m+1)(D_1+1)-1$ bounded interval divided by $\{u_{k,j},v_{k,j}\}_{k\in[D_1]\cup\{0\},j\in[m+1]}$, $G_1(z,x)$ takes every value in $(-\infty,\infty)$, hence the equation $z=G_1(z,x)$ has at least one root on each such bounded interval. There are $(m+1)(D_1+1)-1$ such bounded intervals, therefore $z=G_1(z,x)$ has at least $(m+1)(D_1+1)-1$ distinct real roots. Moreover, the roots $z=G_1(z,x)$ are those of a polynomial of degree at most $(m+1)(D_1+1)+1$, therefore when it has a pair of complex conjugate roots, each bounded interval divided by $\{u_{k,j},v_{k,j}\}_{k\in[D_1]\cup\{0\},j\in[m+1]}$ has exactly one real root, counting multiplicities. At the real root we have $\frac{\partial G_1(z,x)}{\partial z}> 1$.
\end{proof}

\subsection{Hexagonal lattice}

When $I_2=\emptyset$, the square-hexagon lattice we constructed is actually a hexagonal lattice. In this case we shall show that when $I_2=\emptyset$, for each $i\in[n]$, if a pair of complex conjugate roots exist for (\ref{fi}), then the root $z_i(x)$ as used to compute the density of the limit counting measure, can not be real. This follows from an adaptation of Lemma 4.5 in \cite{bk}, in which the uniform perfect matching on a square grid is considered.

When $I_2=\emptyset$, for each $i\in[n]$ we can write (\ref{fi}) as follows
\begin{eqnarray}\label{h75}
\begin{cases}
\frac{t_i}{1-\kappa}-\frac{\kappa}{1-\kappa}\frac{z}{z-1}=nx+\frac{\kappa(n-i)}{1-\kappa}.\\
\mathrm{St}_{\bm_i}(t_i)=\log(z).
\end{cases}
\end{eqnarray}
Then we have
\begin{eqnarray}
z=\frac{\prod_{k=0}^{D_i}\left[\frac{\kappa z}{z-1}+nx(1-\kappa)+\kappa(n-i)-\beta_{i,k}\right]}{\prod_{k=0}^{D_i}\left[\frac{\kappa z}{z-1}+nx(1-\kappa)+\kappa(n-i)-\gamma_{i,k}\right]}:=G_i(z,x)\label{zg}
\end{eqnarray}

Let $i\in[n]$. We consider a contracting hexagon lattice with boundary partition given by $\phi^{(i,\si_0)}(N)\in\GT_{\frac{N}{n}}^+$. Let $\kappa\in(0,1)$, and $\bm_i^{\kappa}$ be the limit counting measure for the partitions on the $\left\lfloor\frac{2\kappa N}{n}\right\rfloor$th row, counting from the bottom. Then using the same arguments as before, we obtain that
\begin{eqnarray*}
\mathrm{St}_{\bm_i^{\kappa}}\left(nx+\frac{\kappa(n-i)}{1-\kappa}\right)=\log (z_i^{\kappa}(x))
\end{eqnarray*}
Hence we have
\begin{eqnarray*}
z_i^{\kappa}(x)=\mathrm{exp}\left(\int_{\RR}\frac{\bm_i^{\kappa}[ds]}{nx+\frac{\kappa(n-i)}{1-\kappa}-s}\right);
\end{eqnarray*}
and 
\begin{eqnarray*}
z_i^{\kappa}(x+\mathbf{i}\epsilon)=\mathrm{exp}\left(\int_{\RR}\frac{\left(nx+\frac{\kappa(n-i)}{1-\kappa}-s-\mathbf{i}\epsilon\right)\bm_i^{\kappa}[ds]}{\left(nx+\frac{\kappa(n-i)}{1-\kappa}-s\right)^2+\epsilon^2}\right)
\end{eqnarray*}
Therefore $\Im[z_i^{\kappa}(x+\mathbf{i}\epsilon)]<0$ when $\epsilon$ is a small positive number. However, when complex roots exist for (\ref{pi}), for real root $s_i(x)$, Lemma \ref{l70} implies that $\Im[s_i(x+\mathbf{i}\epsilon)]\geq 0$ when $\epsilon$ is a small positive number. This implies that when complex roots exist for (\ref{pi}), $z_i^{\kappa}(x+\mathbf{i}\epsilon)$ cannot be real.

 Then we have the following theorem

\begin{theorem}\label{l71}Assume $I_2=\emptyset$. For the contracting hexagon lattice, $(\chi,\kappa)$ is in the frozen region if and only if (\ref{fic}) only has real roots for all $i\in[n]$. The frozen boundary consists of $n$ disjoint cloud curve $C_1,\ldots,C_n$, where for $i\in[n]$, $C_i$ is a cloud curve of class $D_i+1$ with an explicit parametrization given by 
  \begin{equation*}
    \chi_i(t_i)=\frac{1}{n}\left[t_i-\frac{J_i(t_i)}{J_i'(t_i)}\right],\quad
    \kappa_i(t_i)=\frac{1}{J_i'(t_i)},
  \end{equation*}
  where
\begin{eqnarray*}  
J_i(t_i)=(n-i+1)+\frac{1}{\Psi_i(t_i)-1};
\end{eqnarray*}  
and $\Psi_i$ is given by (\ref{psi}). Moreover, each $C_i$ is tangent to $\kappa=0$ with $D_i$ tangent points, and is tangent to $\kappa=1$ with a unique tangent point. The curve $C_n$ is tangent to $\chi=0$, and the curve $C_1$ is tangent to $\chi-r_{d_1}+\kappa-1=0$.
 \end{theorem}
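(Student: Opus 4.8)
The plan is to assemble Theorem~\ref{l71} from the machinery already developed in this section, specializing everything to the case $I_2=\emptyset$. First I would observe that when $I_2=\emptyset$ we have $\ell=|I_2\cap[n]|=0$, so there are no $c_r$'s, $m$ is irrelevant (effectively $m=0$), and the function $J_1$ in~\eqref{j1t} collapses to exactly the same form as the $J_i$ for $2\le i\le n$ given in~\eqref{jit}, namely $J_i(t_i)=(n-i+1)+\frac{1}{\Psi_i(t_i)-1}$. Thus Propositions~\ref{p72}, the curve-parametrization proposition, and Proposition~\ref{p79} all apply uniformly across $i\in[n]$: each $C_i$ is a cloud curve of class $D_i+1$ with the stated parametrization $(\chi_i(t_i),\kappa_i(t_i))$, tangent to $\kappa=1$ at a unique point and tangent to $\kappa=0$ at $D_i$ points. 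This gives the bulk of the structural claims in the theorem essentially for free; the routine part is just checking that the $\ell=0$ specialization of the already-proved statements reads as claimed.

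The genuinely new content is the ``if and only if'' characterization of the frozen region, i.e.\ that $(\chi,\kappa)$ is frozen if and only if~\eqref{fic} has only real roots for every $i\in[n]$. One direction is the Proposition already stated: if~\eqref{fic} has only real roots for all $i$, then by~\eqref{sjl}, \eqref{dsf} the density vanishes. For the converse I would argue contrapositively: suppose for some $i$ equation~\eqref{fic} (equivalently~\eqref{zg}, with $x=\frac{\chi}{1-\kappa}$) has a pair of complex conjugate roots. The key point is that the relevant root for computing the density of $\bm^\kappa$ is $z_i^\kappa(x)$, the branch obtained by analytic continuation from infinity via the composite inverse $G_{i,\kappa}$, and I must show this branch is \emph{not} a real root. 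This is exactly Lemma~\ref{l70} together with the explicit integral representation of $z_i^\kappa(x+\mathbf{i}\epsilon)$ derived just before the theorem statement: from the integral formula $\Im[z_i^\kappa(x+\mathbf{i}\epsilon)]<0$ for small $\epsilon>0$, whereas Lemma~\ref{l70} gives $\frac{\partial s_i}{\partial x}\ge 0$ at a real root, hence $\Im[s_i(x+\mathbf{i}\epsilon)]>0$; so $z_i^\kappa$ cannot coincide with any real root $s_i$. Therefore $z_i^\kappa(x)$ is genuinely complex, $\log z_i^\kappa(x)$ contributes a nonzero imaginary part to $\mathrm{St}_{\bm^\kappa}$ via~\eqref{sjl}, and by~\eqref{dsf} the density is strictly between $0$ and $1$ at $(\chi,\kappa)$, so the point is not frozen. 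Assembling: the frozen region is precisely $\{(\chi,\kappa): \eqref{fic}\text{ has only real roots for all }i\}$, and its boundary is the union of the curves $C_i$ where the transition real$\leftrightarrow$complex occurs, which are the cloud curves already described.

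Finally I would verify disjointness of $C_1,\dots,C_n$ and the two extra tangency statements (that $C_n$ is tangent to $\chi=0$ and $C_1$ is tangent to $\chi-r_{d_1}+\kappa-1=0$). Disjointness should follow from Lemma~\ref{l34}: the measures $\bm_i$ live on disjoint supports (by Lemma~\ref{cm} the intervals $[\beta_{i,k},\gamma_{i,k}]$ for distinct $i$ are arranged in disjoint blocks dictated by the $J_i$'s and the ordering $d_1<\cdots<d_n$), hence the liquid regions cut out by the $i$-th equation sit in disjoint ``bands'' and their boundary curves cannot meet; I would make this precise by tracking the ranges of $\chi_i(t_i)$. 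The tangency to the coordinate axes $\chi=0$ for $i=n$ comes from the $\frac{n-i}{n}\log z$ term vanishing when $i=n$, forcing a tangent line through the origin in the appropriate limit $t_n\to$ a pole of $\Psi_n$; the tangency of $C_1$ to the line $\chi-r_{d_1}+\kappa-1=0$ comes from evaluating the parametrization at the extreme endpoint $t_1\to\gamma_{1,D_1}$ (the right end of the support of $\bm_1$, equal in the limit to the quantity denoted $r_{d_1}$) and reading off the tangent line there. The main obstacle I anticipate is precisely this last bookkeeping: pinning down which pole/endpoint of $\Psi_i$ produces which axis-tangency and confirming the affine equation of the tangent line for $C_1$ requires carefully carrying the limits of $(\chi_i(t_i),\kappa_i(t_i))$ and the constants $\beta_{i,k},\gamma_{i,k}$ from Lemma~\ref{cm}; everything else is a specialization of results already in hand.
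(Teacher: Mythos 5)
Your proposal correctly reduces the cloud-curve structure, the parametrization, and the tangencies to $\kappa=0$ and $\kappa=1$ to Proposition~\ref{p79} and the preceding parametrization proposition, and you correctly invoke Lemma~\ref{l70} together with the integral representation for $z_i^\kappa(x+\mathbf{i}\epsilon)$ to get the ``if and only if'' characterization of the frozen region --- that much matches the paper. Where you diverge, however, is precisely in the parts you flag as ``bookkeeping,'' and those parts as written contain a gap and an error.

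For disjointness and the two extra tangency lines, the paper does \emph{not} carry out a direct computation with the parametrization. Instead it performs the affine change of variables $\tilde\chi_i = n\chi + \kappa(n-i)$, $\tilde\kappa_i=\kappa$, under which $C_i$ becomes $\tilde C_i$, the frozen boundary of a \emph{uniform} dimer model on a contracting hexagon lattice with boundary measure $\bm_i$. It then quotes known results (from \cite{GP15,bg,bk,BL17}) giving that $\tilde C_i$ is confined to an explicit quadrilateral bounded by $\tilde\kappa_i=0,1$, $\tilde\chi_i=nr_{d_{i+1}-1}+(n-i)$, and $\tilde\kappa_i=-\tilde\chi_i+nr_{d_i}+n-i+1$, and is tangent to each boundary piece. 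Undoing the change of variables yields the slanted lines $n(\chi-r_{d_{i+1}-1})+(\kappa-1)(n-i)=0$ and $n(\chi-r_{d_i})+(n-i+1)(\kappa-1)=0$ bounding $C_i$; disjointness of the regions $R_i$ then follows because, for $i<j$, the left boundary line of $R_i$ and the right boundary line of $R_{i+1}$ have the \emph{same} slope $-(n-i)/n$ and are separated by the strict inequality $r_{d_{i+1}-1}>r_{d_{i+1}}$. This parallel-lines observation is the actual content of disjointness; ``tracking the ranges of $\chi_i(t_i)$'' without accounting for the $i$-dependent $\kappa$-shift in the affine map won't close the argument, because the bands tilt by different amounts for different $i$.

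There is also a concrete error in your tangency sketch: you identify the right endpoint $\gamma_{1,D_1}$ of the support of $\bm_1$ with $r_{d_1}$. That is not correct --- for instance in Example~\ref{l712} one has $\gamma_{1,D_1}=2r_1+2 = n\,r_{d_1}+n$, not $r_{d_1}$. The line $\chi-r_{d_1}+\kappa-1=0$ is the image of $\tilde\kappa_1=-\tilde\chi_1+nr_{d_1}+n$ under the inverse change of variables, and the tangency is inherited from the uniform model; it is not ``read off'' at $t_1\to\gamma_{1,D_1}$. Similarly, your description of the $\chi=0$ tangency of $C_n$ as coming from ``$t_n\to$ a pole of $\Psi_n$'' does not match the mechanism in Proposition~\ref{p79}: the $\kappa=0$ tangencies occur where $\Psi_i=1$ (where $J_i$ blows up), and the slanted/vertical tangency lines are additional features obtained from the quoted results after the change of variables, not directly from special values of $t_i$. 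To repair your argument along the direct route you propose, you would essentially have to re-derive the paper's change-of-variables reduction; as written, the proposal leaves the disjointness and the two ``extra'' tangencies unproved.
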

 \begin{proof}For each $i\in[n]$, given the parametrization of $C_i$, the fact that $C_i$ is a cloud curve of class $D_i+1$ follows from Proposition \ref{p79}. We need to show that $C_1,\ldots, C_n$ are disjoint. Note that $C_i$ is characterized by the condition that the system (\ref{h75}) of equations have double roots (note that $x=\frac{\chi}{1-\kappa}$). We make a change of variables in (\ref{h75})
 \begin{eqnarray*}
 \begin{cases}
 \tilde{\chi}_i=n\chi+\kappa(n-i)\\
 \tilde{\kappa}_i=\kappa
 \end{cases},
 \end{eqnarray*}
 and let $\tilde{C}_i$ be the corresponding curve in the new coordinate system $\tilde{\chi}_i,\tilde{\kappa}_i$. Then $\tilde{C}_i$ is the frozen boundary of a uniform dimer model on contracting hexagon lattice with boundary condition given by $\bm_i$. For $1\leq j\leq s$, let $r_j=\lim_{N\rightarrow\infty}\frac{\mu_j(N)}{N}$.
 
  By the results in \cite{GP15,bg,bk,BL17}, $\tilde{C_i}$ satisfies the following conditions
\begin{enumerate} 
\item It is tangent to $\tilde{\kappa}_i=0$ with $D_i$ tangent points;
\item It is tangent to $\tilde{\kappa}_i=1$ with a unique tangent point;
\item It is tangent to  $\tilde{\chi}_i=nr_{d_{i+1}-1}+(n-i)$;
\item It is tangent to  $\tilde{\kappa_i}=-\tilde{\chi}_i+nr_{d_i}+n-i+1$;
\item It is in the bounded region bounded by the curves $\tilde{\kappa}_i=0$, $\tilde{\kappa}_i=1$, $\tilde{\chi}_i=0$, $\tilde{\kappa_i}=-\tilde{\chi}_i+nr_{d_i}+n-i+1$.
  \end{enumerate}
Then $C_i$ satisfies the following conditions   
\begin{enumerate} 
\item It is tangent to $\kappa_i=0$ with $D_i$ tangent points;
\item It is tangent to $\kappa_i=1$ with a unique tangent point;
\item It is tangent to  $n(\chi-r_{d_{i+1}-1})+(\kappa-1)(n-i)=0$;
\item It is tangent to  $n(\chi-r_{d_{i}})+(n-i+1)(\kappa-1)=0$;
\item It is in the bounded region $R_i$ bounded by the curves $\kappa_i=0$, $\kappa_i=1$, $n(\chi-r_{d_{i+1}-1})+(\kappa-1)(n-i)=0$, $n(\chi-r_{d_i})+(n-i+1)(\kappa-1)=0$.
  \end{enumerate}
  Under the assumption that $r_1>r_2>\ldots>r_s$, it is straightforward to check that $R_i\cap R_j=\emptyset$. Then the theorem follows.
 \end{proof}

To illustrate Theorem \ref{l71}, let us see the following example.
\begin{example}\label{l712}Consider a contracting hexagon lattice with period $1\times 2$. Let $x_1=1$, and $\frac{x_2}{x_1}\leq e^{-\alpha N}$. Assume $N$ is an integer multiple of $6$.
\begin{eqnarray*}
&&\lambda_1(N)=\lambda_2(N)=\ldots=\lambda_{\frac{N}{4}}(N)=\mu_1(N)\\
&&\lambda_{\frac{N}{4}+1}(N)=\lambda_{\frac{N}{4}+2}(N)=\ldots=\lambda_{\frac{N}{2}}(N)=\mu_2(N)\\
&&\lambda_{\frac{N}{2}+1}(N)=\lambda_{\frac{N}{2}+2}(N)=\ldots=\lambda_{\frac{2N}{3}}(N)=\mu_3(N)\\
&&\lambda_{\frac{2N}{3}+1}(N)=\lambda_{\frac{N}{2}+2}(N)=\ldots=\lambda_{\frac{5N}{6}}(N)=\mu_4(N)\\
&&\lambda_{\frac{5N}{6}+1}(N)=\lambda_{\frac{N}{2}+2}(N)=\ldots=\lambda_{N}(N)=\mu_5(N)=0.
\end{eqnarray*}
For $1\leq j\leq 5$, let
\begin{eqnarray*}
r_j=\lim_{N\rightarrow\infty}\frac{\mu_j(N)}{N}.
\end{eqnarray*}
Note that $r_5=0$. Then we have $\phi^{(1,\si_0)}(N)\in\GT_{\lfloor\frac{N}{2} \rfloor}^+$ is given by
\begin{eqnarray*}
\phi_i^{(1,\si_0)}(N)=\begin{cases}\mu_1(N)+\frac{N}{2},\ \mathrm{if}\ 1\leq i\leq \frac{N}{4}\\ \mu_2(N)+\frac{N}{2},\ \mathrm{if}\ \frac{N}{4}+1\leq i\leq \frac{N}{2} \end{cases}.
\end{eqnarray*}
and $\phi^{(2,\si_0)}(N)\in\GT_{\lfloor\frac{N}{2} \rfloor}^+$ is given by
\begin{eqnarray*}
\phi_i^{(2,\si_0)}(N)=\begin{cases}\mu_3(N),\ \mathrm{if}\ 1\leq i\leq \frac{N}{6}\\ \mu_4(N),\ \mathrm{if}\ \frac{N}{6}+1\leq i\leq \frac{N}{3}\\ 0,\ \mathrm{if}\ \frac{N}{3}+1\leq i\leq \frac{N}{2} \end{cases}.
\end{eqnarray*}
Hence $\bm_1$ is the uniform measure on $\left[2r_2+1,2r_2+\frac{3}{2}\right]\cup \left[2r_1+\frac{3}{2}, 2r_1+2\right]$; and $\bm_2$ is the uniform measure on $\left[0,\frac{1}{3}\right]\cup\left[2r_4+\frac{1}{3},2r_4+\frac{2}{3}\right]\cup\left[2r_3+\frac{2}{3},2r_3+1\right]$. Then we have the following two systems of linear equations
\begin{eqnarray*}
\begin{cases}
t_1-\frac{\kappa z}{z-1}=2\chi+\kappa\\
z=\frac{\left(t_1-2r_1-\frac{3}{2}\right)(t_1-2r_2-1)}{(t_1-2r_1-2)\left(t_1-2r_2-\frac{3}{2}\right)}
\end{cases}
\end{eqnarray*}
and
\begin{eqnarray*}
\begin{cases}
t_2-\frac{\kappa z}{z-1}=2\chi\\
z=\frac{t_2\left(t_2-2r_4-\frac{1}{3}\right)\left(t_2-2r_3-\frac{2}{3}\right)}{(t_2-\frac{1}{3})\left(t_2-2r_4-\frac{2}{3}\right)(t_2-2r_3-1)}
\end{cases}
\end{eqnarray*}
Then
\begin{eqnarray*}
&&\Psi_1(t_1)=\frac{\left(t_1-2r_1-\frac{3}{2}\right)\left(t_1-2r_2-1\right)}{\left(t_1-2r_1-2\right)\left(t_1-2r_2-\frac{3}{2}\right)}; \qquad\Psi_2(t_2)=\frac{t_2\left(t_2-2r_4-\frac{1}{3}\right)\left(t_2-2r_3-\frac{2}{3}\right)}{(t_2-\frac{1}{3})\left(t_2-2r_4-\frac{2}{3}\right)(t_2-2r_3-1)}.\\
&&J_1(t_1)=\frac{1}{\Psi_1(t_1)-1}+2;\qquad J_2(t_2)=\frac{1}{\Psi_2(t_2)-1}+1.
\end{eqnarray*}
The boundary separating the region where the first system has only real roots and the first system has a pair of complex conjugate roots is given by
\begin{eqnarray*}
\begin{cases}
\chi_1(t_1)=\frac{1}{2}\left[t_1-\frac{J_1(t_1)}{J_1'(t_1)}\right]\\
\kappa_1(t_1)=\frac{1}{J_1'(t_1)}
\end{cases}
\end{eqnarray*}
The boundary separating the region where the second system has only real roots and the second system has a pair of complex conjugate roots is given by
\begin{eqnarray*}
\begin{cases}
\chi_2(t_2)=\frac{1}{2}\left[t_2-\frac{J_2(t_2)}{J_2'(t_2)}\right]\\
\kappa_2(t_2)=\frac{1}{J_2'(t_2)}
\end{cases}
\end{eqnarray*}
For $(r_1,r_2,r_3,r_4)=(12,8,5,2)$; see Figure \ref{fig:fb} for a picture of the frozen boundary.

\begin{figure}
  \includegraphics[width=1.3\textwidth]{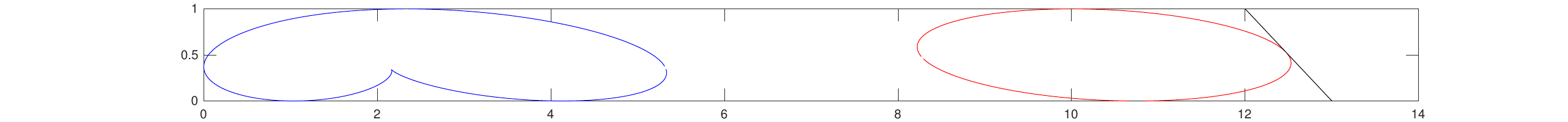}
\caption{Frozen boundary for a contracting hexagonal lattice when $n=2$, $(r_1,r_2,r_3,r_4)=(12,8,5,2)$, represented by the union of the red curve and the blue curve.}
\label{fig:fb}
\end{figure}
\end{example}

\subsection{Square-hexagonal lattice with $|I_2\cap [n]|=1$}

Now we consider the case when
\begin{eqnarray*}
|I_2\cap[n]|=\{r\},
\end{eqnarray*}
where $r$ is a positive integer satisfying $1\leq r\leq n$. Heuristically, there is exactly one row with the structure of a square grid in each period; and all the other rows in the period has the structure of a hexagon lattice.
In this case, for each $2\leq i\leq n$ we can write (\ref{fi}) as follows
\begin{eqnarray*}
\begin{cases}
\frac{t_i}{1-\kappa}-\frac{\kappa}{1-\kappa}\frac{z}{z-1}=nx+\frac{\kappa(n-i)}{1-\kappa}.\\
\mathrm{St}_{\bm_i}(t_i)=\log(z).
\end{cases}
\end{eqnarray*}
When $i=1$, (\ref{fi}) can be written as
\begin{eqnarray*}
\begin{cases}
\frac{t_1}{1-\kappa}-\frac{\kappa}{1-\kappa}\frac{z}{z-1}+\frac{\kappa}{1-\kappa}\frac{z}{z+c_r}=nx+\frac{\kappa(n-1)}{1-\kappa}.\\
\mathrm{St}_{\bm_i}(t_i)=\log(z).
\end{cases}
\end{eqnarray*}
where $c_r=\frac{1}{y_r x_1}$.

Then we have for $2\leq i\leq n$,
\begin{eqnarray}
z=\frac{\prod_{k=0}^{D_i}\left[\frac{\kappa z}{z-1}+nx(1-\kappa)+\kappa(n-i)-\beta_{i,k}\right]}{\prod_{k=0}^{D_i}\left[\frac{\kappa z}{z-1}+nx(1-\kappa)+\kappa(n-i)-\gamma_{i,k}\right]}:=G_i(z,x)\label{zg2}
\end{eqnarray}
and for $i=1$,
\begin{eqnarray}
z=\frac{\prod_{k=0}^{D_i}\left[\frac{\kappa z}{z-1}-\frac{\kappa z}{z+c_r}+nx(1-\kappa)+\kappa(n-1)-\beta_{i,k}\right]}{\prod_{k=0}^{D_i}\left[\frac{\kappa z}{z-1}-\frac{\kappa z}{z+c_r}+nx(1-\kappa)+\kappa(n-i)-\gamma_{i,k}\right]}:=G_i(z,x)\label{zg1}
\end{eqnarray}

\begin{lemma}Assume $x_0>0$ is such that equation (\ref{zg1}) (resp. (\ref{zg2})) has a pair of complex conjugate roots when $i=1$ (resp. $2\leq i\leq n$). Let $s_i(x)$ be a real root of (\ref{zg}). Then
\begin{eqnarray*}
\left.\frac{\partial s_i(x)}{\partial x}\right|_{x=x_0}\geq 0.
\end{eqnarray*}
It is equal to 0 if and only if $s_i(x_0)=1$.
\end{lemma}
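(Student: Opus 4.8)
The plan is to mirror the proof of Lemma~\ref{l70}, splitting according to the value of $i$. For $2\le i\le n$, equation~\eqref{zg2} expresses $z$ as exactly the same rational function $G_i(z,x)$ of $z$ as does~\eqref{zg} in the hexagonal case (it is assembled only from $\frac{\kappa z}{z-1}$, the shift $nx(1-\kappa)+\kappa(n-i)$, and the numbers $\beta_{i,k},\gamma_{i,k}$), so the argument of Lemma~\ref{l70} applies verbatim and yields $\partial s_i(x)/\partial x\ge 0$, with equality if and only if $s_i(x_0)=1$. Thus the only new case is $i=1$. Here one first records the partial fraction identity
\[
\frac{\kappa z}{z-1}-\frac{\kappa z}{z+c_r}=\frac{\kappa}{z-1}+\frac{\kappa c_r}{z+c_r},
\]
which shows that each factor in the numerator (resp.\ denominator) of $G_1$ in~\eqref{zg1} equals $H_1(z;x,\beta_{1,k})$ (resp.\ $H_1(z;x,\gamma_{1,k})$), where $H_1$ is the rational function~\eqref{h1} specialized to $m=1$. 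Hence $G_1(z,x)=\prod_{k=0}^{D_1}H_1(z;x,\beta_{1,k})/H_1(z;x,\gamma_{1,k})$, i.e.\ precisely~\eqref{gzx} with $i=1$, and the whole toolkit of Section~\ref{s7} becomes available. As in Lemma~\ref{l70} I would then write $s_1'(x)=\dfrac{\partial G_1/\partial x}{1-\partial G_1/\partial z}$ and estimate the numerator and denominator separately.

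\emph{Numerator.} Each $H_1(z;x,y)$ is affine in $x$ with positive leading coefficient $n(1-\kappa)$ and vanishes at a point $x^{\ast}(z,y)$ that is strictly increasing in $y$; since $\beta_{1,0}<\gamma_{1,0}<\beta_{1,1}<\gamma_{1,1}<\cdots$, for every fixed $z\in\RR\setminus\{1,-c_r\}$ the zeros and poles of $x\mapsto G_1(z,x)$ interlace, so $G_1(z,x)=\prod_k\frac{x-x^{\ast}(z,\beta_{1,k})}{x-x^{\ast}(z,\gamma_{1,k})}$ is of the form in Lemma~\ref{l33}(1) and is strictly decreasing in $x$; hence $\partial G_1/\partial x<0$ there. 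At $z=1$ (resp.\ $z=-c_r$) the term $\frac{\kappa}{z-1}$ (resp.\ $\frac{\kappa c_r}{z+c_r}$) dominates every factor, so $G_1(z,x)\to1$ independently of $x$ and $\partial G_1/\partial x=0$ at those two values only. But $G_1(-c_r,x)=1\neq-c_r$ while $G_1(1,x)=1$, so among these two exceptional values only $z=1$ can be a root of $z=G_1(z,x)$ (and it always is one); in particular $\partial G_1/\partial x\le 0$ at $(s_1(x_0),x_0)$, with equality exactly when $s_1(x_0)=1$.

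\emph{Denominator.} Assume a pair of complex conjugate roots exists. By Lemma~\ref{l32} (with $m=1$), each of $\prod_kH_1(z;x,\beta_{1,k})$ and $\prod_kH_1(z;x,\gamma_{1,k})$ has $2(D_1+1)$ distinct real zeros, and by Remark~\ref{rem:interlaceH} these interlace; so $G_1(\cdot,x)$ again has the form in Lemma~\ref{l33}(1), is monotone increasing on each of the $2(D_1+1)-1$ bounded intervals between consecutive (simple) poles, and there runs from $-\infty$ to $+\infty$, so that $G_1(z,x)-z$ crosses zero once and transversally on each such interval. After clearing denominators, $z=G_1(z,x)$ is a polynomial equation of degree $2(D_1+1)+1$ (the leading coefficients of numerator and denominator disagree, so no cancellation, as in the discussion after Lemma~\ref{l34}), so the presence of a complex pair leaves exactly $2(D_1+1)-1$ real roots — one per bounded interval, all simple — and at each of them $(G_1-z)'>0$, i.e.\ $\partial G_1/\partial z>1$. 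The degenerate subcases in which some $\beta_{1,k}$ or $\gamma_{1,k}$ equals $L_1$, and the various sign patterns of the constants appearing in the factors, are handled exactly as the cases (1)--(4) in the proof of Lemma~\ref{l70} combined with the count in Lemma~\ref{l34}.

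Combining the two estimates, for an $x_0$ at which a complex pair occurs, $s_1'(x_0)$ is a nonpositive number divided by a negative number, hence $\ge 0$, and it vanishes precisely when $\partial G_1/\partial x=0$, that is, when $s_1(x_0)=1$ (the only other zero $z=-c_r$ of $\partial G_1/\partial x$ having been excluded). The main obstacle is the bookkeeping in the denominator step: although the representation $G_1=\prod_kH_1(\cdot,\beta_{1,k})/H_1(\cdot,\gamma_{1,k})$ reduces matters to the machinery of Section~\ref{s7}, one must still verify carefully that the extra simple pole at $-c_r$ does not disturb the interlacing of the zeros and poles of $G_1$ viewed as a function of $z$, uniformly over all indices $k$ and over every admissible sign configuration of the quantities $K_{1,k}$ and $\kappa+K_{1,k}$ in the factors — this is the natural analogue, with one additional pole, of the case analysis carried out for Lemma~\ref{l70}.
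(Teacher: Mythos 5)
Your treatment of $2\le i\le n$ coincides with the paper's (a direct appeal to Lemma~\ref{l70}). For $i=1$ you take a genuinely different route: the paper recognizes equation~\eqref{zg1} as equation~(8.6) of \cite{bk} for a rectangular Aztec diamond with parameter $q=c_r$, and then imports the argument of Lemma~4.5 of \cite{bk} wholesale; you instead use the partial-fraction identity
\[
\frac{\kappa z}{z-1}-\frac{\kappa z}{z+c_r}=\frac{\kappa}{z-1}+\frac{\kappa c_r}{z+c_r}
\]
to recast every factor of $G_1$ in the form $H_1(z;x,\cdot)$ of~\eqref{h1} and reuse the in-paper machinery of Lemmas~\ref{l32}--\ref{l34} and Remark~\ref{rem:interlaceH}. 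This buys a self-contained argument (no external citation), and your numerator step is complete and correct, including the explicit identification of the extra exceptional value $z=-c_r$ and the observation that $G_1(-c_r,x)=1\neq -c_r$ so it cannot be a root — the exact point needed to obtain ``equality iff $s_1(x_0)=1$.''

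The gap is in the denominator step, and you flag it yourself. To conclude $\partial G_1/\partial z>1$ at a real root you need $G_1(\cdot,x)$ to be strictly \emph{increasing} on each bounded interval between its poles, not merely monotone. Lemma~\ref{l33} by itself cannot deliver the direction: the monotonicity direction of $G_1=C\prod_k\frac{z-a_k}{z-b_k}$ depends both on the interlacing pattern of $\{a_k\}$ and $\{b_k\}$ and on the sign of the overall constant $C$, which is why the proof of Lemma~\ref{l70} carries out the explicit factorization via $f(t)=t/(\kappa+t)$ and splits into the sign cases (1)--(4) on $\kappa+p_{i,k}$. With the extra pole at $-c_r$, each $H_1$ now has two zeros rather than one, so the number of sign configurations and the corresponding interlacings change and must be re-verified; the hexagonal case analysis does not apply ``verbatim,'' and Lemma~\ref{l34} only gives you the \emph{count} of crossings, not their transversality direction. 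Your write-up asserts this analysis goes through but does not perform it, which is precisely the work that the paper's citation of Lemma~4.5 of \cite{bk} is standing in for. Until that case analysis is carried out, the inequality $\partial G_1/\partial z>1$ at real roots is claimed rather than proved.
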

\begin{proof}When $2\leq i\leq n$, the lemma follows from lemma \ref{l70}. When $i=1$, (\ref{zg1}) is the same as the equation for rectangular Aztec diamond with period $1\times 1$  and parameter $q=c_r$, boundary condition given by $\bm_i$; see equation (8.6) of \cite{bk}. Then the lemma follows from the same argument as the proof of Lemma 4.5 in \cite{bk}.
\end{proof}

Now let us consider a rectangular Aztec diamond (resp. contracting hexagon lattice) with boundary partition given by $\phi^{(i,\si_0)}(N)\in\GT_{\frac{N}{n}}^+$ when $i=1$ (resp. $2\leq i\leq n$). Let $\kappa\in(0,1)$, and $\bm_i^{\kappa}$ be the limit counting measure for the partitions on the $\left\lfloor\frac{2\kappa N}{n}\right\rfloor$th row, counting from the bottom. Then using the same arguments as before, we obtain that
\begin{eqnarray*}
\mathrm{St}_{\bm_i^{\kappa}}\left(nx+\frac{\kappa(n-i)}{1-\kappa}\right)=\log (z_i^{\kappa}(x))
\end{eqnarray*}
Hence we have
\begin{eqnarray*}
z_i^{\kappa}(x)=\mathrm{exp}\left(\int_{\RR}\frac{\bm_i^{\kappa}[ds]}{nx+\frac{\kappa(n-i)}{1-\kappa}-s}\right);
\end{eqnarray*}
and 
\begin{eqnarray*}
z_i^{\kappa}(x+\mathbf{i}\epsilon)=\mathrm{exp}\left(\int_{\RR}\frac{\left(nx+\frac{\kappa(n-i)}{1-\kappa}-s-\mathbf{i}\epsilon\right)\bm_i^{\kappa}[ds]}{\left(nx+\frac{\kappa(n-i)}{1-\kappa}-s\right)^2+\epsilon^2}\right)
\end{eqnarray*}
Therefore $\Im[z_i^{\kappa}(x+\mathbf{i}\epsilon)]<0$ when $\epsilon$ is a small positive number. However, when complex roots exist for (\ref{zg}), for real root $s_i(x)$, Lemma \ref{l70} implies that $\Im[s_i(x+\mathbf{i}\epsilon)]>0$ when $\epsilon$ is a small positive number. This implies that when complex roots exist for (\ref{zg}), $z_i^{\kappa}(x+\mathbf{i}\epsilon)$ cannot be real. Then we have the following theorem

\begin{theorem}\label{l74}Assume $I_2\cap[n]=\{r\}$, where $r$ is a positive integer satisfying $1\leq r\leq n$. For the contracting square-hexagon lattice, $(\chi,\kappa)$ is in the frozen region if and only if (\ref{fic}) only has real roots for all $1\leq i\leq n$. The frozen boundary consists of $n$ disjoint cloud curve $C_1,\ldots,C_n$, where for $2\leq i\leq n$, $C_i$ is a cloud curve of class $D_i+1$; $C_1$ is a cloud curve of class $2(D_1+1)$. The curve $C_i$ has an explicit parametrization given by 
  \begin{equation*}
    \chi_i(t_i)=\frac{1}{n}\left[t_i-\frac{J_i(t_i)}{J_i'(t_i)}\right],\quad
    \kappa_i(t_i)=\frac{1}{J_i'(t_i)},
  \end{equation*}
  where for $2\leq i\leq n$;
\begin{eqnarray*}  
J_i(t_i)=(n-i+1)+\frac{1}{\Psi_i(t_i)-1};
\end{eqnarray*}
for $i=1$
\begin{eqnarray*}
J_1(t_1)=\frac{1}{\Psi_1(t_1)-1}+n-1+\frac{c_r}{\Psi_1(t_1)+c_r}
\end{eqnarray*}  
and $\Psi_i$ is given by (\ref{psi}). Moreover, for $2\leq i\leq n$, each $C_i$ is tangent to $\kappa=0$ with $D_i$ tangent points, and $C_1$ is tangent to $\kappa=0$ with $2 D_i+1$ points. For $1\leq i\leq n$, $C_i$ is tangent to $\kappa=1$ with a unique tangent point. The curve $C_n$ is tangent to $\chi=0$, and the curve $C_1$ is tangent to $\chi-r_{d_1}+\frac{1}{2}(\kappa-2)=0$.
\end{theorem}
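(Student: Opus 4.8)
The plan is to follow the template of the proof of Theorem \ref{l71}, the purely hexagonal case, replacing the contracting hexagon lattice that underlies the $i=1$ component by a rectangular Aztec diamond with period $1\times 1$ and parameter $q=c_r=1/(y_rx_1)$, and leaving the components $2\le i\le n$ exactly as before. First I would establish the easy direction of the frozen-region characterization: if \eqref{fic} has only real roots for every $i\in[n]$, then by \eqref{sjl} and \eqref{dsf} the Stieltjes transform $\mathrm{St}_{\bm^\kappa}(\chi/(1-\kappa))$ is real, so $(\chi,\kappa)$ is frozen. For the converse I would argue as in the hexagonal case: the preceding lemma shows that whenever the system \eqref{zg1} (for $i=1$) or \eqref{zg2} (for $2\le i\le n$) has a complex conjugate pair of roots, every real root $s_i(x)$ satisfies $\partial_x s_i(x)\ge 0$, with equality only when $s_i(x)=1$, so $\Im s_i(x+\mathbf{i}\epsilon)>0$ for small $\epsilon>0$; on the other hand, the integral representation of $z_i^\kappa(x+\mathbf{i}\epsilon)$ coming from $\mathrm{St}_{\bm_i^\kappa}(nx+\kappa(n-i)/(1-\kappa))=\log z_i^\kappa(x)$ gives $\Im z_i^\kappa(x+\mathbf{i}\epsilon)<0$, so $z_i^\kappa(x+\mathbf{i}\epsilon)$ is non-real and $(\chi,\kappa)$ lies in the liquid region. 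Combined with Proposition \ref{p72}, which says each equation has at most one complex pair, this identifies the frozen boundary with $\bigcup_{i=1}^n C_i$.

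Next I would record the class and parametrization of each $C_i$. The curve $C_i$ is the locus where the system \eqref{fi} — equivalently the single rational equation $z=\prod_k H_i(z;x,\beta_{i,k})/H_i(z;x,\gamma_{i,k})$ with $H_i$ as in \eqref{h1} and \eqref{eq:defHzxy} — acquires a double root. Eliminating $t_i$ through $\Psi_i(t_i)=z$, exactly as in the proof of the parametrization proposition preceding Proposition \ref{p79}, yields the stated formulas for $\chi_i(t_i)$, $\kappa_i(t_i)$, $J_i(t_i)$. Proposition \ref{p79}, applied with $m=1$ (since $|I_2\cap[n]|=1$), then gives that $C_i$ is a cloud curve of class $D_i+1$ for $2\le i\le n$ and of class $(m+1)(D_1+1)=2(D_1+1)$ for $i=1$, tangent to $\kappa=1$ at a unique point and to $\kappa=0$ at $D_i$ points when $2\le i\le n$ and at $(m+1)(D_1+1)-1=2D_1+1$ points when $i=1$.

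Finally, disjointness. For each $i$ I would perform the affine change of variables $\tilde\chi_i=n\chi+\kappa(n-i)$, $\tilde\kappa_i=\kappa$, under which $C_i$ becomes the frozen boundary $\tilde C_i$ of the auxiliary model with boundary measure $\bm_i$: a uniform dimer model on a contracting hexagon lattice when $2\le i\le n$ (so Theorem \ref{l71} and \cite{GP15,bg,BL17} apply), and a rectangular Aztec diamond with parameter $q=c_r$ when $i=1$, which is exactly equation (8.6) of \cite{bk}. From the known descriptions of these frozen boundaries one gets that $\tilde C_i$ is contained in an explicit bounded quadrilateral cut out by $\tilde\kappa_i=0$, $\tilde\kappa_i=1$, a vertical line in $r_{d_{i+1}-1}$, and a slanted line ($\tilde\kappa_i=-\tilde\chi_i+nr_{d_i}+(n-i+1)$ for $i\ge 2$, and the analogous arctic tangent of the Aztec diamond for $i=1$). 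Transporting back to $(\chi,\kappa)$ gives bounded regions $R_i\supseteq C_i$; a direct computation, as in the proof of Theorem \ref{l71}, shows $R_i\cap R_j=\emptyset$ under the strict ordering $r_1>\cdots>r_s$, so the $C_i$ are disjoint. The tangent line $\chi=0$ of $C_n$ and the tangent line $\chi-r_{d_1}+\tfrac12(\kappa-2)=0$ of $C_1$ are then read off from $\tilde\chi_n=0$ and from the arctic tangent line of the rectangular Aztec diamond, respectively.

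I expect the main obstacle to be the $i=1$ component: one must carefully match \eqref{zg1} with the rectangular Aztec diamond equation (8.6) of \cite{bk}, confirm that the correct parameter identification is $q=1/(y_rx_1)$, and extract from the arctic curve of that model both the precise tangent line $\chi-r_{d_1}+\tfrac12(\kappa-2)=0$ and the boundedness statement that places $R_1$ away from $R_2,\dots,R_n$. The remaining steps — the frozen-region characterization and the cloud-curve/class computations for $2\le i\le n$ — are routine adaptations of the hexagonal argument already carried out in the proof of Theorem \ref{l71}.
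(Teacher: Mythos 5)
Your proposal is correct and takes essentially the same approach the paper intends: the paper states Theorem~\ref{l74} without an explicit proof, expecting the reader to adapt the proof of Theorem~\ref{l71} verbatim, and your reconstruction is precisely that adaptation --- using the preceding lemma (real-root monotonicity) together with the $\Im z_i^\kappa(x+\mathbf{i}\epsilon)<0$ argument for the frozen-region characterization, applying Proposition~\ref{p79} with $m=1$ to get class $2(D_1+1)$ for $C_1$ and $D_i+1$ for $i\ge 2$, and running the same affine change of variables to reduce disjointness to known bounding quadrilaterals. You also correctly flag the one place where work beyond routine copying is required, namely matching~\eqref{zg1} with the rectangular Aztec diamond equation of \cite{bk} to extract the $i=1$ tangent line $\chi-r_{d_1}+\tfrac12(\kappa-2)=0$ and the bounding region separating $R_1$ from $R_2,\dots,R_n$.
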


To illustrate Theorem \ref{l74}, let us see the following example.
\begin{example}Consider a contracting square-hexagon lattice with period $1\times 2$. Let $x_1=1$, and $\frac{x_2}{x_1}\leq e^{-\alpha N}$. Assume $N$ is an integer multiple of $6$.
Let $\lambda(N)$, $\mu(N)$, $r_j\ (1\leq j\leq 5)$, $\phi^{i,\si_0}\ (1\leq i\leq 2)$, $\bm_i$ be given as in Example \ref{l712}. 
Then we have the following two systems of linear equations
\begin{eqnarray*}
\begin{cases}
t_1-\frac{\kappa z}{z-1}=2\chi+\kappa\\
z=\frac{\left(t_1-2r_1-\frac{3}{2}\right)(t_1-2r_2-1)}{(t_1-2r_1-2)\left(t_1-2r_2-\frac{3}{2}\right)}
\end{cases}
\end{eqnarray*}
and
\begin{eqnarray*}
\begin{cases}
t_2-\frac{\kappa z}{z-1}+\frac{\kappa z}{z+c_r}=2\chi\\
z=\frac{t_2\left(t_2-2r_4-\frac{1}{3}\right)\left(t_2-2r_3-\frac{2}{3}\right)}{(t_2-\frac{1}{3})\left(t_2-2r_4-\frac{2}{3}\right)(t_2-2r_3-1)}
\end{cases}
\end{eqnarray*}
Then
\begin{eqnarray*}
&&\Psi_1(t_1)=\frac{\left(t_1-2r_1-\frac{3}{2}\right)\left(t_1-2r_2-1\right)}{\left(t_1-2r_1-2\right)\left(t_1-2r_2-\frac{3}{2}\right)}; \qquad\Psi_2(t_2)=\frac{t_2\left(t_2-2r_4-\frac{1}{3}\right)\left(t_2-2r_3-\frac{2}{3}\right)}{(t_2-\frac{1}{3})\left(t_2-2r_4-\frac{2}{3}\right)(t_2-2r_3-1)}.\\
&&J_1(t_1)=\frac{1}{\Psi_1(t_1)-1}+1+\frac{c_r}{\Psi_1(t_1)+c_r};\qquad J_2(t_2)=\frac{1}{\Psi_2(t_2)-1}+1.
\end{eqnarray*}
The boundary separating the region where the first system has only real roots and the first system has a pair of complex conjugate roots is given by
\begin{eqnarray*}
\begin{cases}
\chi_1(t_1)=\frac{1}{2}\left[t_1-\frac{J_1(t_1)}{J_1'(t_1)}\right]\\
\kappa_1(t_1)=\frac{1}{J_1'(t_1)}
\end{cases}
\end{eqnarray*}
The boundary separating the region where the second system has only real roots and the second system has a pair of complex conjugate roots is given by
\begin{eqnarray*}
\begin{cases}
\chi_2(t_2)=\frac{1}{2}\left[t_2-\frac{J_2(t_2)}{J_2'(t_2)}\right]\\
\kappa_2(t_2)=\frac{1}{J_2'(t_2)}
\end{cases}
\end{eqnarray*}
For $(r_1,r_2,r_3,r_4)=(12,8,5,2), c_r=\frac{1}{2}$; see Figure \ref{fig:fbr} for a picture of the frozen boundary.

\begin{figure}
  \includegraphics[width=1.3\textwidth]{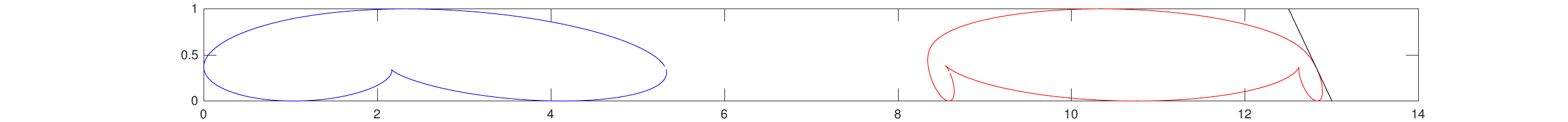}
\caption{Frozen boundary for a contracting square hexagon lattice with $n=2$, $|I_2\cap\{1,2\}|=1$ when $(r_1,r_2,r_3,r_4)=(12,8,5,2),c_r=\frac{1}{2}$, represented by the union of the red curve and the blue curve.}
\label{fig:fbr}
\end{figure}
\end{example}

\section{Appendix}\label{s8}

In this section, we give concrete examples to illustrate the combintorial formula to compute the Schur functions. Example \ref{e422} is to illustrate Theorem \ref{p421}.

\begin{example}\label{e422}Let $N=4$, $\lambda=(3,3,3,1)$, and $X=(x_1,x_2,x_1,x_2)$. Assume $x_1\neq x_2$ Then
\begin{eqnarray}
s_{\lambda}(x_1,x_2,x_1,x_2)=x_1^4x_2^4(3x_1^2+4x_1x_2+3x_2^2)\label{sch}
\end{eqnarray}
Moreover,
\begin{eqnarray*}
|[\Si_4/\Si_4^{X}]^r|=\frac{4!}{2!2!}=6.
\end{eqnarray*}
We find an representative for each right cosets in $[\Si_4/\Si_4^{X}]^r$, as follows:
\begin{eqnarray*}
\sigma_1=\mathrm{id};\qquad\sigma_1=(12);\qquad \sigma_3=(34);\\
\sigma_4=(23);\qquad\sigma_5=(14);\qquad \sigma_6=(12)(34).
\end{eqnarray*}
Then we can compute
\begin{eqnarray*}
(\eta_1^{\si_k},\eta_2^{\si_k},\eta_3^{\si_k},\eta_4^{\si_k})=\left\{\begin{array}{cc}(2,1,1,0).&\mathrm{if}\ k=1,2,3,6\\ (2,2,0,0)&\mathrm{if}\ k=4,5 \end{array}\right.
\end{eqnarray*}
and
\begin{eqnarray*}
\phi^{(1,\sigma_1)}=(5,4),\qquad \phi^{(2,\sigma_1)}=(4,1)\\
\phi^{(1,\sigma_2)}=(4,4),\qquad \phi^{(2,\sigma_2)}=(5,1)\\
\phi^{(1,\sigma_3)}=(5,1),\qquad \phi^{(2,\sigma_3)}=(4,4)\\
\phi^{(1,\sigma_4)}=(5,5),\qquad \phi^{(2,\sigma_4)}=(3,1)\\
\phi^{(1,\sigma_5)}=(3,1),\qquad \phi^{(2,\sigma_5)}=(5,5)\\
\phi^{(1,\sigma_6)}=(4,1),\qquad \phi^{(2,\sigma_6)}=(5,4)
\end{eqnarray*}
Computing the right hand side of (\ref{sws}), we obtain exactly the right hand side of (\ref{sch}).
\end{example}

\bigskip
\bigskip

\noindent\textbf{Acknowledgements.} ZL thanks C\'edric Boutillier for comments. ZL's research is supported by National Science Foundation grant DMS 1608896.

\bibliography{fpmm}
\bibliographystyle{plain}
\end{document}